\newcommand{\Set}{\mathbf{Set}}
\newcommand{\sSet}{\mathbf{sSet}}
\newcommand{\Cat}{\mathbf{Cat}}
\newcommand{\Pos}{\mathbf{Pos}}
\newcommand{\Top}{\mathbf{Top}}
\newcommand{\arDiv}[3][$*$]{ \path (A#2) -- (A#3) node[midway] (A#2-#3) {#1};%
                        \draw[mainTri] (A#2) -- (A#2-#3); %
                        \draw[mainTri] (A#3) -- (A#2-#3);}
\newcommand{\baryc}[4][$*$]{ \node (A#2-#3-#4) at (barycentric cs:A#2=1,A#3=1 ,A#4=1) {#1};%
                             \draw[mainTri] (A#2-#3) -- (A#2-#3-#4);
                             \draw[mainTri] (A#2-#4) -- (A#2-#3-#4);
                             \draw[mainTri] (A#3-#4) -- (A#2-#3-#4);}
\newcommand{\C}{\mathcal{C}}
\newcommand{\W}{\mathcal{W}}
\tikzset{mainTri/.style={densely dotted, ->, >=to}}
\DeclareMathOperator{\Sd}{Sd}
\DeclareMathOperator{\Ex}{Ex}
\renewcommand{\tilde}{\widetilde}
\renewcommand{\sU}{\mathcal{U}}
\renewcommand{\sV}{\mathcal{V}}
\newtheorem{thm}{Theorem}[section]
\newtheorem{cor}[thm]{Corollary}
\newtheorem{prop}[thm]{Proposition}
\newtheorem{lem}[thm]{Lemma}
\theoremstyle{definition}
\newtheorem{defn}[thm]{Definition}
\newtheorem{exmp}[thm]{Example}
\newtheorem{notn}[thm]{Notation}
\newtheorem{rem}[thm]{Remark}
\let\c@equation\c@thm
\numberwithin{equation}{section}
\let\SK@label\label\fi
 \let\your@thm\@thm
 \def\@thm#1#2#3{\gdef\currthmtype{#3}\your@thm{#1}{#2}{#3}}
 \def\mylabel#1{{\let\your@currentlabel\@currentlabel\def\@currentlabel
  {\currthmtype~\your@currentlabel}
 \SK@label{#1@}}\label{#1}}
 \def\myref#1{\ref{#1@}}
\newcommand{\n}{\mathbf{n}}
\begin{document}

\begin{abstract} Let $G$ be a discrete group. We prove that the category of
  $G$-posets admits a model structure that is Quillen equivalent to the standard
  model structure on $G$-spaces.  As is already true nonequivariantly, the three
  classes of maps defining the model structure are not well understood
  calculationally.  To illustrate, we exhibit some examples of cofibrant and
  fibrant posets and an example of a non-cofibrant finite poset.
\end{abstract}

\title{The homotopy theory of equivariant posets}
\author{Peter May}
\address{Department of Mathematics, University of Chicago, Chicago, IL 60637, USA}
\email{may@math.uchicago.edu}
\author{Marc Stephan}
\thanks{The second author was supported by SNSF grant 158932.}
\address{Department of Mathematics, University of British Columbia, Vancouver, BC V6T 1Z2, Canada}
\email{mstephan@math.ubc.ca}
\author{Inna Zakharevich}
\thanks{The third author was supported by an NSF MSRFP grant.}
\address{Department of Mathematics, Cornell University, Ithaca, NY  14853-4201, USA}
\email{zakh@math.cornell.edu}

\maketitle

\tableofcontents

\section{Introduction} 
In \cite{thomason80}, Thomason proved that categories model the homotopy theory
of topological spaces by proving that the category $\Cat$ of (small) categories
has a model structure that is Quillen equivalent to the standard model structure
on the category $\Top$ of topological spaces.  In \cite{raptis}, Raptis proved
that the category of posets also models the homotopy theory of topological
spaces by showing that the category $\Pos$ of posets has a model structure that
is Quillen equivalent to the Thomason model structure on $\Cat$.  It is natural
to expect this to hold since Thomason proved in
\cite[Proposition~5.7]{thomason80} that cofibrant categories in his model
structure are posets.  The first and third authors rediscovered this, observing
that a geodesic proof, if not the statement, of that result is already contained
in Thomason's paper. This implies that all of the algebraic topology of spaces
can in principle be worked out in the category of posets.  It can also be viewed
as a bridge between the combinatorics of partial orders and algebraic topology.

In this paper we prove an analogous result for the category of $G$-spaces for a
discrete group $G$.  For a category $\aC$, let $G\aC$ denote the category of
objects with a (left) action of $G$ and maps that preserve the action.  In
\cite{manyauthors}, Bohmann, Mazur, Osorno, Ozornova, Ponto, and Yarnall proved
in precise analogy to Thomason's result that $G\Cat$ models the homotopy theory
of $G$-spaces.  Here we prove the pushout of the results of Raptis and Bohmann, et al:
the category $G\Pos$ of $G$-posets admits a model structure that is Quillen equivalent 
to the model structure on the category $G\Cat$ of $G$-categories and therefore also 
Quillen equivalent to the model structures on $G\sSet$ and $G\Top$.  Just as the model 
structure on $\Pos$ is implicit in Thomason's paper \cite{thomason80}, we shall see that 
the model structure on $G\Pos$ is implicit in the six author paper \cite{manyauthors}.

While the background makes this an expected result, it is perhaps surprising, at least psychologically.
There is relatively little general study of equivariant posets in either the combinatorial or topological
literature, especially not from a homotopy theoretic perspective.  One thinks of group actions as permutations, as exemplified by the symmetric groups, and it does not come naturally to think of a 
general theory of groups acting by order-preserving maps of posets.  
However, our theorem says that group actions on posets abound: every $G$-space is weakly equivalent to the classifying $G$-space of a $G$-poset, where a map $f$ of $G$-spaces is a weak equivalence if its fixed point maps $f^H$ are weak equivalences for all subgroups $H$ of $G$. The result can be viewed as a formal bridge between equivariant combinatorics and equivariant algebraic topology.

The combinatorial literature seems to start with Stanley's paper \cite{stanley},
which restricts to finite posets and focuses on the connection with
representation theory.  A paper of Babson and Kozlov \cite{babsonkozlov} about
$G$-posets $X$ focuses on problems arising from the fact that the orbit category
$X/G$ is generally not a poset.  There is considerable group theory literature
about posets of subgroups of $G$ with $G$ acting by conjugation, starting from
Quillen's paper \cite{quillen}.  That led Th\'evenaz and Webb to an equivariant
generalization of Quillen's Theorem A applicable to $G$-posets
\cite{thevenazwebb}. In turn, that led to Welker's paper \cite{welker}, which
considers the order $G$-complex associated to a $G$-poset, again with group
theoretic applications in mind.

Let $\aO_G$ denote the orbit category of $G$. Its objects are the $G$-sets $G/H$ and its morphisms are the 
$G$-maps.  Just as for $G$-spaces, $G$-simplicial sets (that is, simplicial $G$-sets), and $G$-categories, it is natural to start with the levelwise (or projective) model structure on the category $\aO_G$-$\Pos$ of contravariant functors $\aO_G\rtarr \Pos$.  As a functor category, $\aO_G$-$\Pos$ inherits a model structure from $\Pos$. Its fibrations and weak equivalences are defined levelwise.  It is standard that this gives a compactly generated model 
structure (e.g. \cite[11.6.1]{hirschhorn}).\footnote{Compactly generated is a variant of cofibrantly generated that applies when only countable 
colimits are needed in the small object argument, that is, when transfinite colimits are 
unnecessary and irrelevant, as they are in all of the model structures we shall consider.
This variant is discussed in detail in \cite[\S15.2]{mayponto}.  It seems reasonable to eliminate
transfinite verbiage whenever possible, and that would shorten and simplify some of the work in the 
sources we shall cite.}

Define the fixed point diagram functors 
\[\PH\colon G\Pos\rtarr \aO_G \text{-}  \Pos\ \ \ \tand \ \ \ \PH\colon G\Cat\rtarr \aO_G \text{-} \Cat \] 
by
\[ \PH(X)(G/H) = X^H.\]  These functors $\PH$ have left adjoints, denoted $\LA$; in both cases, $\LA$ sends 
a contravariant functor $Y$ defined on $\aO_G$ to $Y(G/e)$. 

We prove that $G\Pos$ inherits a model 
structure from $\aO_G$-$\Pos$. The analogue for $G\Cat$ is \cite[Theorem A]{manyauthors}.  
After recalling details of the model structures already cited, we shall prove the 
following theorem.

\begin{thm}\mylabel{Main} The functor $\PH$ creates a compactly generated proper model 
structure on $G\Pos$, so that a map of $G$-posets is a weak equivalence or fibration if 
it is so after applying $\PH$. The adjunction $(\LA,\PH)$ is a Quillen equivalence between 
$G\Pos$ and $\aO_G$-$\Pos$.
\end{thm}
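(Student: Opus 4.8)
The plan is to obtain the model structure on $G\Pos$ by the standard lifting (transfer) theorem for compactly generated model categories, applied to the adjunction $(\LA,\PH)$ and the projective model structure on $\aO_G\text{-}\Pos$ recalled above. Write $I$ and $J$ for the generating cofibrations and trivial cofibrations of $\Pos$, so that $\aO_G\text{-}\Pos$ is generated by the sets $\{F_{G/H}i\}$ and $\{F_{G/H}j\}$ ($H\le G$), with $F_{G/H}$ left adjoint to evaluation at $G/H$. Using $\LA F_{G/H}P\cong(G/H)\times P$ as a $G$-poset ($G$ acting through its action on $G/H$) and the identification $\aO_G(G/K,G/H)\cong(G/H)^K$, one sees that the candidate generating sets $\bar I=\{(G/H)\times i\}$ and $\bar J=\{(G/H)\times j\}$ for $G\Pos$ satisfy $\PH((G/H)\times i)\cong F_{G/H}i$ and $\PH((G/H)\times j)\cong F_{G/H}j$. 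The category $G\Pos$ is bicomplete, limits and colimits being computed in $\Pos$; and the domains appearing in $\bar I$ and $\bar J$ are compact, since $G\Pos((G/H)\times P,-)\cong\Pos(P,(-)^H)$ and finite posets are compact in $\Pos$. So the one substantive hypothesis of the transfer theorem (see \cite{mayponto}) left to verify is the acyclicity condition: that $\PH$ carries every relative $\bar J$-cell complex to a levelwise weak equivalence in $\aO_G\text{-}\Pos$.

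Verifying this is the main obstacle, and it is where the combinatorics of $\Pos$ recalled above is used: the maps in $I$ and $J$ are \emph{Dwyer-type inclusions} of posets, so that pushouts in $G\Pos$ along $(G/H)\times i$ or $(G/H)\times j$, and countable composites of such, are computed on underlying sets, with no identifications beyond the prescribed gluing. Because the $K$-fixed points of a $G$-poset are computed on underlying sets with the induced order, $\PH$ preserves all the pushouts and sequential colimits occurring in a relative $\bar J$-cell complex. Hence, via $\PH((G/H)\times j)\cong F_{G/H}j$, the functor $\PH$ sends a relative $\bar J$-cell complex to a relative cell complex on the generating trivial cofibrations of $\aO_G\text{-}\Pos$, which is in particular a trivial cofibration there, hence a weak equivalence. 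This yields acyclicity and so produces the compactly generated model structure on $G\Pos$, created by $\PH$ and generated by $\bar I$ and $\bar J$. The identical argument with $I$ in place of $J$ shows that $\PH$ preserves relative $\bar I$-cell complexes and the pushouts building them, hence carries cofibrations of $G\Pos$ to cofibrations of $\aO_G\text{-}\Pos$. This whole step is the equivariant counterpart of the reasoning for $G\Cat$ in \cite{manyauthors} (and, nonequivariantly, of the arguments already in \cite{thomason80}).

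The rest is formal. For right properness, $\PH$ is a right adjoint and so preserves pullbacks, while it preserves fibrations and weak equivalences by construction; thus a pullback in $G\Pos$ of a weak equivalence along a fibration goes to a pullback of a weak equivalence along a fibration in $\aO_G\text{-}\Pos$, which is a weak equivalence by right properness there (inherited levelwise from \cite{raptis}), and $\PH$ reflects weak equivalences. For left properness, $\PH$ preserves pushouts along the generating cofibrations (the Dwyer-type input again) and preserves and reflects weak equivalences, so left properness of $G\Pos$ reduces to that of $\aO_G\text{-}\Pos$, again inherited levelwise from \cite{raptis}. Finally, $(\LA,\PH)$ is a Quillen pair since $\PH$ preserves fibrations and trivial fibrations, and $\PH$ reflects all weak equivalences. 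For cofibrant $X\in\aO_G\text{-}\Pos$ the unit $\eta_X\colon X\to\PH\LA X$ is an isomorphism: it is one on each $F_{G/H}P\cong\PH((G/H)\times P)$, and, since $\PH\LA$ preserves the pushouts and composites assembling a cell complex, it is one on every cell complex, hence on every cofibrant object. Therefore the derived unit $X\to\PH(R\LA X)$ is $\PH$ applied to the trivial cofibration $\LA X\to R\LA X$, hence a weak equivalence, and since $\PH$ reflects all weak equivalences, a standard recognition criterion for Quillen equivalences (see \cite{hirschhorn}) shows that $(\LA,\PH)$ is a Quillen equivalence.
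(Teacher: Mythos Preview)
Your proof is correct and follows essentially the same approach as the paper: the paper invokes a packaged transfer result (\myref{omni}, taken from \cite{stephan} and \cite{manyauthors}) whose hypotheses (i)--(iv) on fixed-point functors are precisely the colimit-preservation facts you verify directly, and the key technical input---that the generating (acyclic) cofibrations yield Dwyer $G$-maps so that fixed points preserve the relevant pushouts (\myref{keytoomore})---is identical in both arguments. One small imprecision: in your left properness step you need $\PH$ to preserve pushouts along \emph{all} cofibrations, not just the generating ones; this follows because every cofibration in $G\Pos$ is a Dwyer $G$-map by \myref{retract2}, and \myref{keytoomore} applies to arbitrary Dwyer $G$-maps.
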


Replacing $\Pos$ with $\Cat$ in \myref{Main} gives the statement of \cite[Theorem A]{manyauthors}. The 
strategy of proof in \cite{manyauthors} is to verify general conditions on a model
category $\aC$ that ensure that $G\aC$ inherits a model structure from $\aO_G$-$\aC$.\footnote{There
are two slightly different ways to equip $G\aC$ with a model structure, either transferring the model 
structure from $\aO_G$-$\aC$, as we shall do, or from copies of $\aC$ via all of the fixed point functors, as
in \cite{manyauthors, stephan}.}
The cited general conditions are taken from a paper of the second author \cite{stephan}.
Our proof of \myref{Main} will proceed in the same way. The following result is a formal
consequence of \myref{Main} and its analogue for $\Cat$. 

\begin{thm}\mylabel{MainToo}  The adjunction $(P,U)$ between $G\Cat$ and $G\Pos$ is a Quillen equivalence.  
Therefore, $G\Pos$ is Quillen equivalent to $G\sSet$ and $G\Top$. 
\end{thm}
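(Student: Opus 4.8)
The plan is to get \myref{MainToo} formally from \myref{Main}, its categorical analogue \cite[Theorem~A]{manyauthors}, and Raptis's nonequivariant Quillen equivalence $(P,U)\colon\Cat\rtarr\Pos$, by comparing $(P,U)$ on $G$-objects with $(\LA,\PH)$ through the diagram categories. Write $U$, $P$ also for the functors $G\Pos\rtarr G\Cat$ and $G\Cat\rtarr G\Pos$ obtained by applying $U$, $P$ to underlying objects, and $U_*\colon\aO_G\text{-}\Pos\rtarr\aO_G\text{-}\Cat$, $P_*\colon\aO_G\text{-}\Cat\rtarr\aO_G\text{-}\Pos$ for postcomposition with $U$, $P$. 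The starting point is that $U$ is compatible with fixed points: for a $G$-poset $X$, the underlying category of $X^H$ is the $H$-fixed subcategory of the underlying category of $X$, so $\PH\circ U=U_*\circ\PH$ as functors $G\Pos\rtarr\aO_G\text{-}\Cat$. Because $U\colon\Pos\rtarr\Cat$ preserves fibrations and acyclic fibrations, so does $U_*$ levelwise, hence so does $U\colon G\Pos\rtarr G\Cat$ for the structures created by $\PH$; thus $(P,U)\colon G\Cat\rtarr G\Pos$ is a Quillen adjunction, and passing to left adjoints in $\PH\circ U=U_*\circ\PH$ yields a square of left Quillen functors
\[
\begin{array}{ccc}
\aO_G\text{-}\Cat & \xrightarrow{\,P_*\,} & \aO_G\text{-}\Pos\\
\downarrow & & \downarrow\\
G\Cat & \xrightarrow{\,P\,} & G\Pos
\end{array}
\]
(vertical maps $\LA$) commuting up to natural isomorphism.

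Next I would check that $(P_*,U_*)$ is a Quillen equivalence for the levelwise model structures on $\aO_G\text{-}\Cat$ and $\aO_G\text{-}\Pos$. It is a Quillen adjunction by the above. For the equivalence, let $Y$ be cofibrant in $\aO_G\text{-}\Cat$ and $Z$ fibrant in $\aO_G\text{-}\Pos$, and consider a map $P_*Y\rtarr Z$ with adjunct $Y\rtarr U_*Z$; being a weak equivalence is objectwise on both sides. Since projective cofibrant diagrams are objectwise cofibrant (evaluate a cellular presentation: evaluation functors preserve colimits and carry the generating projective cofibrations to coproducts of generating cofibrations of $\Cat$) and fibrant diagrams are objectwise fibrant, at each $G/H$ we compare a map $P(Y(G/H))\rtarr Z(G/H)$ out of a cofibrant category with its adjunct into a fibrant poset; by Raptis's Quillen equivalence, one is a weak equivalence exactly when the other is. Letting $G/H$ vary gives the claim --- this is just the general principle that a Quillen equivalence induces one on projective diagram categories.

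It remains to assemble the pieces. By the previous paragraph and \myref{Main}, the composite $\LA\circ P_*\colon\aO_G\text{-}\Cat\rtarr G\Pos$ is a composite of Quillen equivalences, hence a Quillen equivalence; by the square it is naturally isomorphic to $P\circ\LA$, so $P\circ\LA$ is a Quillen equivalence. Since $\LA\colon\aO_G\text{-}\Cat\rtarr G\Cat$ is a Quillen equivalence by \cite[Theorem~A]{manyauthors}, the two-out-of-three property for Quillen equivalences forces $(P,U)\colon G\Cat\rtarr G\Pos$ to be one. The final assertion then follows by composing with the known Quillen equivalences $G\Cat\simeq G\sSet\simeq G\Top$ (see \cite{manyauthors}). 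Since everything rests on \myref{Main}, its categorical analogue, Raptis's theorem, and formal properties of Quillen equivalences, the argument presents no real obstacle; the only point needing a bit of care is the standard fact, used in the middle paragraph, that projective cofibrant diagrams are objectwise cofibrant.
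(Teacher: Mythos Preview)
Your proof is correct and follows the same route as the paper: use the commuting square with vertical $(\LA,\PH)$ and horizontal $(P,U)$ (resp.\ $(P_*,U_*)$), invoke \myref{Main} and \cite[Theorem~A]{manyauthors} for the verticals and the levelwise Quillen equivalence $(P_*,U_*)$ for the bottom, then conclude by two-out-of-three. The paper simply asserts the bottom-row equivalences as inherited levelwise from the nonequivariant ones, whereas you spell out the standard argument via objectwise cofibrancy of projective cofibrants; this is extra detail rather than a different approach.
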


The following diagram displays the relevant equivariant Quillen equivalences.

\[ \xymatrix{
G\Top \ar@<-.5ex>[rr]_-{S_*} \ar@<.5ex>[dd]^{\PH}  & & 
G\sSet \ar@<-.5ex>[ll]_-{|-|} \ar@<.5ex>[rr]^-{\PI \Sd^2}  \ar@<.5ex>[dd]^{\PH} & & 
G\Cat \ar@<.5ex>[ll]^-{\Ex^2 N}  \ar@<.5ex>[rr]^-{P} \ar@<.5ex>[dd]^{\PH} & & 
G\Pos \ar@<.5ex>[ll]^-{U} \ar@<.5ex>[dd]^{\PH} \\ 
& & & \\
\aO_G\text{-}\Top \ar@<-.5ex>[rr]_-{S_*} \ar@<.5ex>[uu]^{\LA}  & & 
\aO_G\text{-}\sSet \ar@<-.5ex>[ll]_-{|-|} \ar@<.5ex>[rr]^-{\PI \Sd^2} \ar@<.5ex>[uu]^{\LA} & & 
\aO_G\text{-}\Cat \ar@<.5ex>[ll]^-{\Ex^2 N}  \ar@<.5ex>[rr]^-{P} \ar@<.5ex>[uu]^{\LA}  & & 
\aO_G\text{-}\Pos \ar@<.5ex>[ll]^-{U} \ar@<.5ex>[uu]^{\LA} \\}  \]
The definitions of $\PI$, $\Sd$, $\Ex$, and $N$ are recalled in the next section.

All of the vertical adjunctions and the adjunctions on the bottom row are Quillen
equivalences, hence so are all of the adjunctions on the top row. Applied to the
righthand square, this gives the proof of \myref{MainToo}.  Applied to the middle 
square, this gives \cite[Theorem B]{manyauthors}, which is the equivariant version of
Thomason's comparison between $\sSet$ and $\Cat$. 

\begin{rem}  Both equivariantly and nonequivariantly, replacing $\Cat$ by $\Pos$
ties in the Thomason model structure to more classical algebraic topology.  The 
composite $N\com U\colon \Pos\rtarr \sSet$ coincides with the composite of the
functor that sends a poset to its order complex and the canonical functor from ordered 
simplicial complexes to simplicial sets, and the same is true equivariantly.  It 
also ties in the Thomason model structure to finite $T_0$-spaces and, more generally
$T_0$-Alexandroff spaces, or $A$-spaces, since the categories of posets and
$A$-spaces are isomorphic.
\end{rem}

An interesting and unfortunate feature of all of the model structures discussed
in this paper is that the classes of weak equivalences, cofibrations, and
fibrations are defined formally, using non-constructive arguments.  In no case
do we have a combinatorially accessible description of any of these classes of
maps. Even in the case when $G$ is trivial very little is known about the
structure.  In \cite[Theorem 2.2.11]{cisinski04}, Cisinski gives a
characterization of the subcategory of weak equivalences in $\Cat$ through a
global characterization,  but that does not allow us to determine
whether or not a particular morphism is a weak equivalence.

The state of the art for fibrant and cofibrant objects is similarly sparse.   
The problem of determining the cofibrant posets has recently been studied by
Bruckner and Pegel \cite{BP}, who show in particular that every poset with at
most five elements is cofibrant.  In \S\ref{Cof},  we prove that all finite posets of dimension 
one are cofibrant and give an example of a
six element poset that is {\em not} cofibrant.\footnote{Amusingly, when we found
  this example we did not know that it is the smallest possible one.}
  
The problem of determining the fibrant categories has recently been studied by Meier and Ozornova
\cite{MO}.  In \S\ref{Fib}, we use work of Droz and the third author \cite{DZ} to obtain a more concrete
understanding of the posets that the main theorem of \cite{MO} shows to be fibrant.

Before turning to the equivariant generalizations, we review and reprove the
nonequivariant theorems, giving some new details that streamline and clarify the
key arguments.

\section{Background} We recall as much as we need about the definitions of the nonequivariant versions 
of the functors in the diagram above and describe the relevant nonequivariant model structures. 
 Of course, the nerve $N\aC$ of a category $\aC$ is the 
simplicial set with 
\[(N\aC)_n = \big\{x_0 \rtarr \cdots \rtarr x_n \in \aC\big\}.\] Define $(\Sd \DE)(n)$
to be the nerve of the poset of nonempty subsets of $\{0,1,\cdots,n\}$. Then
$\Sd\DE$ is a covariant functor $\DE \rtarr \sSet$.  Let $K\colon \DE^{op}\rtarr
\Set$ be a simplicial set.  The subdivision $\Sd K$ is the simplicial set
defined conceptually as the tensor product of functors (given by the evident
left Kan extension)
\[\Sd K = K\otimes_{\DE} \Sd\DE.\]
The functor $\Ex$ is the right adjoint of $\Sd$; we will not need a description of it.

The fundamental category\footnote{Following \cite{thomason80}, the functor $\PI$ is generally denoted $c$, 
or sometimes $cat$, in the literature.} $\PI K$ has object set $K_0$ and morphism set freely generated by $K_1$,
where $x\in K_1$ is viewed as a morphism $d_1x\rtarr d_0x$, subject to the relations
\[ d_1y = (d_0y)\com (d_2y)   \  \ \hbox{for each $y\in K_2$}\ \ \ \text{and}\ \  \ 
s_0x=\id_x \  \ \hbox{for each $x\in K_0$.}\]

The functor $U$ is the full and faithful functor that sends a poset $X$ to $X$ 
regarded as the category with objects the elements of $X$ and a morphism $x\rtarr y$ 
whenever $x\leq y$. The image of $U$ consists of skeletal categories with at most one
morphism $x\rtarr y$ for each pair of objects $(x,y)$. The functor $P$ sends a category $\aC$ 
to the poset $P\aC$ with points
the equivalence classes $[c]$ of objects of $\aC$, where $c\sim d$ if there are morphisms 
$c \rtarr d$ and $d \rtarr c$ in $\aC$. The partial order $\leq$ is defined by $[c]\leq [d]$ if there is a 
morphism $c\rtarr d$ in $\aC$, a condition independent of the choice of representatives in
the equivalence classes.  Note crucially that $P\com U$ is the identity functor.  We often drop the 
notation $U$, regarding posets as categories.

We recall the specification of the model structures that we are starting from.

\begin{defn}  A functor $F\colon \aC \rtarr \aD$ between (small) categories is a fibration or weak equivalence if $\Ex^2 NF$ is a fibration or weak equivalence.  An order preserving function $f\colon X \rtarr Y$ between posets is a fibration or weak equivalence if $Uf$ is a fibration or weak equivalence; that is, $f$ is a fibration or weak equivalence if it is so when considered as a functor.
\end{defn} 

As noted by Thomason \cite[Proposition 2.4]{thomason80}, $F$ is a weak equivalence if and only if $NF$ is a weak equivalence.

\begin{notn} Let $\aI$ denote the set of generating cofibrations $\pa \DE[n] \rtarr \DE[n]$ and let $\aJ$ denote the set of generating acyclic cofibrations $\LA^k[n]\rtarr \DE[n]$ for the standard model structure on $\sSet$.  
\end{notn}

\begin{thm}[Thomason]\mylabel{Thom}  With these fibrations and weak equivalences, $\Cat$ is a 
compactly generated proper model category whose sets of generating cofibrations and generating acyclic cofibrations are 
$\PI\Sd^2\aI $ and $\PI\Sd^2\aJ$. Via the adjunction $(\PI\Sd^2,\Ex^2 N)$, this model structure is Quillen equivalent to the standard model structure on $\sSet$.
\end{thm}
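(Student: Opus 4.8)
The plan is to obtain the model structure by transfer along the adjunction $(\PI\Sd^2,\Ex^2 N)$ from the standard model structure on $\sSet$, using Kan's lifting theorem in its compactly generated form, and then to prove the Quillen equivalence by hand. The category $\Cat$ is complete and cocomplete; the domains of the maps in $\PI\Sd^2\aI$ and $\PI\Sd^2\aJ$ are finite categories, hence compact in $\Cat$, so the relevant small object arguments terminate after countably many steps; and $\Ex^2 N$ preserves filtered colimits because $N$ and $\Ex$ do. Granting this, the transfer theorem endows $\Cat$ with a compactly generated model structure having $\PI\Sd^2\aI$ as generating cofibrations, $\PI\Sd^2\aJ$ as generating acyclic cofibrations, and weak equivalences and fibrations created by $\Ex^2 N$ --- precisely the classes named in the statement --- \emph{provided} every relative $\PI\Sd^2\aJ$-cell complex is a weak equivalence. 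This acyclicity condition is the heart of the matter.

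To verify it I would follow Thomason and work with the class of \emph{Dwyer maps}: inclusions of categories $\aC\hookrightarrow\aD$ admitting a factorization $\aC\hookrightarrow\W\hookrightarrow\aD$ in which $\aC$ is a sieve in $\W$ and the inclusion $\W\hookrightarrow\aD$ has a right adjoint restricting to the identity on $\aC$. Three facts are needed: (a) the nerve sends a pushout in $\Cat$ along a Dwyer map to a homotopy pushout of simplicial sets --- in particular $N$ carries a Dwyer map to a cofibration and the cobase change of a weak equivalence along a Dwyer map to a weak equivalence; (b) Dwyer maps are closed under coproduct, pushout, and transfinite composition in $\Cat$; and (c) each map $\PI\Sd^2(\LA^k[n]\rtarr\DE[n])$ is a Dwyer map whose nerve is a weak equivalence. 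Granting (a)--(c), a relative $\PI\Sd^2\aJ$-cell complex is a transfinite composite of pushouts of coproducts of the maps in (c); by (b) it is a Dwyer map, while by (a) and (c) its nerve is a transfinite composite of cobase changes of weak equivalences along cofibrations, hence a weak equivalence; so the cell complex is a weak equivalence by \cite[Proposition~2.4]{thomason80}. I expect step (c) to be the main obstacle: it requires an explicit combinatorial description of the double subdivision of a horn inclusion, and it is here that one subdivision does not suffice --- $\PI\Sd(\LA^k[n]\rtarr\DE[n])$ need not be a Dwyer map, whereas $\Sd^2$ of any inclusion of finite simplicial sets is regular enough for the argument to go through.

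Properness splits into a formal half and a delicate one. Right properness is automatic: $\Ex^2 N$ is a composite of right adjoints, so it preserves pullbacks, and it creates and reflects both fibrations and weak equivalences, so right properness of $\Cat$ follows from that of $\sSet$. Left properness is not formal, since $\Ex^2 N$ does not preserve pushouts, and I would derive it from the Dwyer-map analysis: every cofibration of $\Cat$ is a retract of a $\PI\Sd^2\aI$-cell complex, which is a Dwyer map by (b), so left properness follows from (a). One must be careful here, since Thomason's assertion that every cofibration is itself a Dwyer map is not correct (as Cisinski observed); the argument should instead be run at the level of cell complexes and their retracts, a point worth making explicit.

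For the Quillen equivalence, $(\PI\Sd^2,\Ex^2 N)$ is a Quillen pair because $\Ex^2 N$ preserves fibrations and acyclic fibrations by the definition of the transferred structure. Since every object of $\sSet$ is cofibrant and $\Ex^2 N$ both preserves and reflects all weak equivalences, the standard criterion reduces the claim to showing that the unit $\eta_K\colon K\rtarr\Ex^2 N\PI\Sd^2 K$ is a weak equivalence for every simplicial set $K$. Regarding $(\PI\Sd^2,\Ex^2 N)$ as the composite of $(\Sd^2,\Ex^2)$ and $(\PI,N)$, the unit $\eta_K$ factors as $K\rtarr\Ex^2\Sd^2 K\rtarr\Ex^2 N\PI\Sd^2 K$, in which the first map is the unit of $(\Sd^2,\Ex^2)$ --- a weak equivalence by the classical theory of the last-vertex map $\Sd K\rtarr K$ and of $\Ex$ --- and the second is $\Ex^2$ applied to the unit $\Sd^2 K\rtarr N\PI\Sd^2 K$ of $(\PI,N)$. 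As $\Ex^2$ preserves weak equivalences, it remains to prove Thomason's lemma that $\Sd^2 K\rtarr N\PI\Sd^2 K$ is a weak equivalence; for a general simplicial set $L$ the unit $L\rtarr N\PI L$ is far from a weak equivalence, so this, like step (c), rests on the regularity of double subdivisions, and two-out-of-three then completes the argument.
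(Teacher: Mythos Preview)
Your strategy matches the paper's: transfer the model structure via Kan's lifting theorem, and verify the acyclicity condition through the Dwyer-map machinery (your facts (a)--(c) are exactly the ingredients the paper assembles from Thomason). One slip to fix: your Dwyer-map definition is garbled. In the factorization $\aC\hookrightarrow\W\hookrightarrow\aD$ it is $\aC$ that must be a sieve in $\aD$, $\W$ that must be a \emph{cosieve} in $\aD$, and the inclusion $\aC\hookrightarrow\W$ (not $\W\hookrightarrow\aD$) that admits the right adjoint restricting to the identity on $\aC$.

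You diverge from the paper in two sub-arguments. For left properness the paper does not attempt your retract manoeuvre; it simply cites Cisinski's fix via pseudo Dwyer maps. Your suggestion does work, though: every cofibration is a retract \emph{under its domain} of a $\PI\Sd^2\aI$-cell complex, pushouts of weak equivalences along Dwyer maps are weak equivalences by your (a), and weak equivalences are closed under retracts of arrows---so the pushout of a weak equivalence along an arbitrary cofibration is a retract of a weak equivalence. This is more self-contained than invoking pseudo Dwyer maps. For the Quillen equivalence the paper takes a shortcut you may prefer: rather than analyzing the unit, it observes that $\Ex^2 N$ induces an equivalence of homotopy categories because $N$ does (Quillen) and $\Ex$ does (Kan), and since $\Ex^2 N$ also reflects weak equivalences this suffices. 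Your route via the unit is correct but obliges you to prove that $\Sd^2 K\to N\PI\Sd^2 K$ is a weak equivalence for \emph{all} $K$; for the $K$ in $\aI$ and $\aJ$ this map is even an isomorphism (those $\Sd^2 K$ are already nerves of posets), but the general case needs its own argument, which the paper's approach sidesteps.
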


\begin{rem} In contrast to more recent papers, which use but do not always need transfinite colimits,
Thomason's paper preceded the formal introduction of cofibrantly generated model categories, and he 
neither used nor needed such colimits; our statement is a reformulation of what he actually proved.
\end{rem}

\begin{thm}[Raptis]\mylabel{OldMain} With these fibrations and weak equivalences, $\Pos$ is a compactly generated
proper model category whose sets of generating cofibrations and generating acyclic cofibrations are 
$P\PI\Sd^2\aI $ and $P\PI\Sd^2\aJ$.  Via the adjunction $(P,U)$, this model structure is Quillen equivalent to the Thomason model structure 
on $\Cat$.
\end{thm}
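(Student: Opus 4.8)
The plan is to produce the model structure on $\Pos$ by transferring the Thomason model structure of \myref{Thom} along the adjunction $(P,U)\colon \Cat \rightleftarrows \Pos$, in which $P$ is left adjoint to the fully faithful functor $U$ and $P\com U$ is the identity endofunctor of $\Pos$. The first order of business is to record that $\Pos$ is bicomplete: a limit of posets is again a poset, so limits are created as in $\Cat$, while $U$ exhibits $\Pos$ as a reflective subcategory of $\Cat$ with reflector $P$, so colimits in $\Pos$ are obtained by applying $P$ to the colimit formed in $\Cat$. One also checks that $U$ preserves filtered (in particular sequential) colimits --- a filtered colimit of posets, formed in $\Cat$, is again a poset --- and that finite posets are compact in $\Pos$, so that the small object argument for $P\PI\Sd^2\aI$ and $P\PI\Sd^2\aJ$ needs only countable colimits.

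With these in hand I would invoke the compactly generated form of the transfer (Kan recognition) lemma, as in \cite[\S15.2]{mayponto} or \cite[\S11.3]{hirschhorn}: it suffices to verify the \emph{acyclicity condition}, that $U$ carries every relative $P\PI\Sd^2\aJ$-cell complex in $\Pos$ to a weak equivalence in $\Cat$. Granting this, $\Pos$ acquires a compactly generated model structure with $P\PI\Sd^2\aI$ and $P\PI\Sd^2\aJ$ as generating cofibrations and generating acyclic cofibrations and with a map $f$ a fibration or weak equivalence exactly when $Uf$ is.

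The crux --- and the step I expect to be the real obstacle --- is a combinatorial input that is already present, if only implicitly, in Thomason's paper and underlies his result that cofibrant categories are posets (\cite[Proposition~5.7]{thomason80}): the domains and codomains of $\PI\Sd^2\aI$ and $\PI\Sd^2\aJ$ are finite posets, the maps between them are cofibrations of the strong kind Thomason analyses (Dwyer maps), and the pushout in $\Cat$ of such a map along an \emph{arbitrary} functor out of a poset is again a poset. Extracting and packaging this cleanly from \cite{thomason80}, rather than re-deriving Thomason's cellular estimates, is the only delicate point. Granting it, $U$ preserves the pushouts, coproducts, and sequential colimits used to build a relative $P\PI\Sd^2\aJ$-cell complex, so its image under $U$ is literally the corresponding relative $\PI\Sd^2\aJ$-cell complex in $\Cat$, hence a Thomason acyclic cofibration and in particular a weak equivalence; this is the acyclicity condition. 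The same observation identifies the cofibrations and cofibrant objects of $\Pos$ with those of $\Cat$, reproving Thomason's Proposition~5.7 along the way.

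It remains to address properness and the Quillen equivalence, both formal. For left properness, a pushout in $\Pos$ of a weak equivalence along a cofibration is computed as in $\Cat$ (by the previous paragraph, together with a retract argument), so left properness descends from the Thomason structure; for right properness, pullbacks in $\Pos$ are created as in $\Cat$ and fibrations and weak equivalences are detected by $U$, so it too descends. Finally, $U$ is right Quillen by construction, and to see that $(P,U)$ is a Quillen equivalence note that the counit $PU X \to X$ is an isomorphism for every poset $X$, while for every cofibrant $\aC$ in $\Cat$ the unit $\aC \to UP\aC$ is a weak equivalence (it is an isomorphism once $\aC$ is known to be a poset, by Thomason's Proposition~5.7, and an equivalence of categories is a weak equivalence in general). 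Applying two-out-of-three to the factorization $\aC \to UP\aC \to UX$ of the adjunct of a map $P\aC \to X$, and using that $U$ both preserves and reflects weak equivalences, gives the Quillen equivalence.
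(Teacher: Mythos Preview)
Your proposal is correct and follows essentially the same route as the paper: transfer via Kan's recognition theorem, with the acyclicity condition reduced to the combinatorial fact (packaged as \myref{cruxtoo} and \myref{relcomp}) that the relevant $\Cat$-colimits of posets are again posets, and the Quillen equivalence deduced from Thomason's observation that cofibrant categories are posets. The only minor difference is that the paper proves left properness of $\Pos$ directly from \myref{key} and \myref{retract} rather than descending it from $\Cat$; your descent argument also works but tacitly needs the same retract lemma (to know that arbitrary cofibrations in $\Pos$ are Dwyer maps, so that pushouts along them are computed in $\Cat$) together with Cisinski's repair of Thomason's left properness argument for $\Cat$.
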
 

\section{The proofs of Theorems \ref{Thom} and \ref{OldMain}}

The proofs of the model axioms in \cite{thomason80, raptis} can be streamlined by use of a slight variant of Kan's
transport theorem \cite[Theorem 11.3.2]{hirschhorn}.  It is proven in \cite[16.2.5]{mayponto}.

\begin{thm}[Kan]\mylabel{Kan}
  Let $\aC$ be a compactly generated model category with generating
  cofibrations $\aI$ and generating acyclic cofibrations $\aJ$.  Let $\aD$ be a
  bicomplete category, and let $F\colon \aC \rightleftarrows \aD :\!U$ be a pair
  of adjoint functors.  Assume that
  \begin{itemize}
  \item[(i)] all objects in the sets $F\aI $ and $F\aJ  $ are compact and
  \item[(ii)] the functor $U$ takes relative $F\aJ  $-cell complexes to weak equivalences.
  \end{itemize}
  Then there is a compactly generated model structure on $\aD$ such that $F\aI $ is
  the set of generating cofibrations, $F\aJ  $ is the set of generating acyclic
  cofibrations, and the weak equivalences and fibrations are the morphisms
  $f$ such that $Uf$ is a weak equivalence or fibration.  Moreover, $(F\dashv U)$ is a Quillen pair. 
\end{thm}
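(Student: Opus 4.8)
The plan is to run the (countable) small object argument with the sets $F\aI$ and $F\aJ$ and then deduce the model axioms by the standard retract arguments, with hypothesis (ii) entering at exactly one spot. First I would declare a map $f$ in $\aD$ to be a \emph{weak equivalence} or a \emph{fibration} according as $Uf$ is one in $\aC$, and a \emph{cofibration} if it has the left lifting property against every acyclic fibration. Since $\aD$ is bicomplete and $U$ preserves composites, identities and retracts, the weak equivalences immediately inherit the two-out-of-three and retract axioms from those of $\aC$, and the cofibrations, being defined by a lifting property, are automatically closed under retracts, cobase change and (countable) composition.

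The first substantive step is the adjunction bookkeeping. For a map $g$ in $\aD$, having the right lifting property against $F\aI$ (resp.\ $F\aJ$) is equivalent, by $(F\dashv U)$, to $Ug$ having the right lifting property against $\aI$ (resp.\ $\aJ$), which in the compactly generated model category $\aC$ holds precisely when $Ug$ is an acyclic fibration (resp.\ a fibration). Hence the $F\aI$-injectives in $\aD$ are exactly the acyclic fibrations and the $F\aJ$-injectives are exactly the fibrations. By hypothesis (i) all domains and codomains of $F\aI$ and $F\aJ$ are compact, so the countable small object argument applies and yields (functorial) factorizations of every map in $\aD$ as a relative $F\aI$-cell complex followed by an acyclic fibration, and as a relative $F\aJ$-cell complex followed by a fibration. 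A relative $F\aI$-cell complex, having the left lifting property against the $F\aI$-injectives, is a cofibration; a relative $F\aJ$-cell complex has the left lifting property against the $F\aJ$-injectives, hence (since acyclic fibrations are fibrations) against all acyclic fibrations, so it too is a cofibration, and by hypothesis (ii) it is moreover a weak equivalence, hence an acyclic cofibration.

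It remains to verify the two lifting axioms and to identify the classes of maps. One lifting axiom holds by the definition of cofibration. For the other — the crux of the argument — let $i$ be an acyclic cofibration and factor it as $i = p \circ j$ with $j$ a relative $F\aJ$-cell complex and $p$ a fibration. Since $i$ and $j$ are both weak equivalences, so is $p$ by two-out-of-three, so $p$ is an acyclic fibration; because $i$ is a cofibration it lifts against $p$, and the usual retract argument exhibits $i$ as a retract of $j$. As $j$ has the left lifting property against every fibration, so does its retract $i$. With both lifting axioms and the factorizations in hand, $\aD$ is a model category. The same retract argument applied to the $F\aI$-factorization identifies the cofibrations with the retracts of relative $F\aI$-cell complexes, and (using (ii) again) the acyclic cofibrations with the retracts of relative $F\aJ$-cell complexes; thus $F\aI$ and $F\aJ$ are generating cofibrations and generating acyclic cofibrations and the structure is compactly generated. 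Finally $U$ preserves fibrations and acyclic fibrations essentially by definition, so $(F\dashv U)$ is a Quillen pair. The only genuinely non-formal ingredient is hypothesis (ii), invoked exactly to know that the left-hand factor in the $F\aJ$-factorization is a weak equivalence; everything else is the standard transfer machinery, here specialized to avoid transfinite colimits.
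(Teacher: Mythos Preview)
Your argument is correct and is the standard proof of Kan's transfer theorem. Note, however, that the paper does not actually prove this statement: it attributes the result to Kan, cites \cite[Theorem~11.3.2]{hirschhorn} for the cofibrantly generated version, and refers to \cite[16.2.5]{mayponto} for the compactly generated variant stated here. Your write-up is essentially the argument one finds in those references, specialized (as the paper intends) to countable colimits.
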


\begin{rem} It is clear that if $\aC$ is right proper then so is $\aD$.  
Since the standard model structure on $\sSet$ is right proper, so are the 
model structures on $\Cat$ and $\Pos$ described below.  It is less clear
that they are left proper, as we shall discuss.
\end{rem} 

Compactly generated makes sense when the generating sets are compact in the sense of 
\cite[15.1.6]{mayponto}, as we require in condition (i).  In \myref{OldMain}, the domain 
posets of all maps in $P\PI \Sd^2\aI  $ and $P\PI \Sd^2\aJ$ are finite since they are obtained 
from simplicial sets with only finitely many $0$-simplices. Therefore they are compact relative 
to all of $\Cat$ and in particular are compact relative 
to $P\PI\Sd^2\aI $ and $P\PI\Sd^2\aJ$. This shows that (i) holds, and we need only 
prove (ii) to complete the proof of the model axioms in \myref{OldMain}. 

Since we are working with compact generation, a relative $P\PI\Sd^2\aJ$-complex 
$i\colon A\rtarr X = \colim X_n$ is the colimit of a sequence 
of maps of posets $X_n\rtarr X_{n+1}$, where $X_0 = A$ and $X_{n+1}$ is a pushout 
\begin{equation}\label{push} 
\xymatrix{  K_n\ar[d]_{j} \ar[r]^-{f} & X_n \ar[d] \\
    L_n\ar[r]  \ar[r] & X_{n+1}\\}
\end{equation}
in $\Pos$ in which $j$ is a coproduct of maps in $P\PI\Sd^2\aJ$.
We must prove that such a map $i$, or rather $Ui$, is a weak equivalence in $\Cat$.
The only subtlety in the proof of \myref{OldMain} is that pushouts in $\Cat$ between maps in 
$\Pos$ are generally not posets. Rather, pushouts in $\Pos$ are constructed by taking pushouts 
in $\Cat$ and then applying the left adjoint $P$.  However, results already in \cite{thomason80} show
that we do not encounter that problem when constructing relative $P\PI\Sd^2\aJ$-complexes, as 
we now explain.   

To deal with pushouts when proving \myref{Thom}, Thomason introduced the notion of a Dwyer map.

\begin{defn} Let $\aS$ be a subcategory of a category $\aC$. Then $\aS$ is
  called a {\em sieve in $\aC$} if for every morphism 
$f\colon c\rtarr s$ in $\aC$ with $s\in\aS$,  
$c$ and $f$ are in $\aS$.  Dually, $\aS$ is a {\em cosieve} if for every morphism $f\colon s\rtarr c$ in $\aC$ with
$s\in\aS$,  $c$ and $f$ are in $\aS$.  In either case, $\aS$ must be a full subcategory of $\aC$. Observe that
if a sieve factors as a composite of inclusions $\aS\rtarr \aT\rtarr \aC$, then $\aS\rtarr \aT$ is again a sieve.
\end{defn}

\begin{defn} A functor $k\colon \aS\rtarr \aC$ in $\Cat$ or in $\Pos$ is a Dwyer map if $k$ is the 
inclusion of a sieve and $k$ factors as a composite 
\[  \xymatrix@1{\aS\ar[r]^-{i} & \aT \ar[r]^-{j} & \aC, \\} \] 
where $j$ is the inclusion of a cosieve and $i$ is an inclusion with a right adjoint 
$r\colon \aT\rtarr \aS$ such that the unit $\id\rtarr r\com i$ 
of the adjunction is the identity.
\end{defn}

The following sequence of results shows that \myref{OldMain} is directly implied by details in
Thomason's paper \cite{thomason80} that he used to prove \myref{Thom}.  Except that we add in the trivial statement about 
coproducts, the first is \cite[Lemma 5.6]{thomason80}. 

\begin{lem}\mylabel{cruxtoo} The following statements about posets hold.
\begin{enumerate}[(i)]
\item For any simplicial set $K$, $\PI \Sd^2 K$ is a poset. 
\item Any subcategory of a poset is a poset.
\item Any coproduct of posets in $\Cat$ is a poset.
\item If $j\colon K \rtarr L$ is a Dwyer map between posets and
$f\colon K \rtarr X$ is a map of posets, then the
pushout $Y$ in $\Cat$ of $j$ and $f$ is a poset.
\item The (directed) colimit in $\Cat$ of any sequence of maps of posets is a poset.
\end{enumerate}
\end{lem}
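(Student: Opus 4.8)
The plan is to treat the five statements separately, using throughout the characterization that a small category $\aC$ is (the category associated via $U$ to) a poset precisely when $\mathrm{Hom}_{\aC}(x,y)$ has at most one element for all objects $x,y$ and the existence of both a morphism $x\rtarr y$ and a morphism $y\rtarr x$ forces $x=y$. Statements (i), (ii), (iv), (v) are \cite[Lemma~5.6]{thomason80}, and (iii) is the triviality added for bookkeeping; I would nonetheless run the arguments, the only nonformal one being (iv).

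For (i), $\PI\Sd^2 K$ is a poset by \cite[Lemma~5.6]{thomason80}: two barycentric subdivisions rigidify $K$ enough that its fundamental category satisfies the two conditions above --- informally, one subdivision makes every nondegenerate simplex embedded and so removes degenerate loops, and a second separates any remaining parallel edges. Statement (ii) is immediate: for a subcategory $\aS$ of a poset $\aC$, the set $\mathrm{Hom}_{\aS}(x,y)$ is contained in $\mathrm{Hom}_{\aC}(x,y)$ and so has at most one element, and if $\aS$ contains morphisms $x\rtarr y$ and $y\rtarr x$ then so does $\aC$, whence $x=y$. Statement (iii) is the same argument componentwise: in a coproduct $\coprod_i X_i$ in $\Cat$ the hom-set between two objects is empty unless they lie in a common $X_i$, in which case it is a hom-set of $X_i$. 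For (v), the colimit of a sequence $X_0\rtarr X_1\rtarr\cdots$ of posets is a filtered colimit in $\Cat$ and hence is computed on underlying graphs; a morphism between two objects of $\colim X_n$ is then an element of $\colim_n \mathrm{Hom}_{X_n}(x_n,y_n)$, a filtered colimit of sets of cardinality at most one, which again has cardinality at most one; and if the colimit contains morphisms $[x]\rtarr[y]$ and $[y]\rtarr[x]$ then $X_n$ contains morphisms $x_n\rtarr y_n$ and $y_n\rtarr x_n$ at some finite stage, so $x_n=y_n$ and $[x]=[y]$.

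The crux is (iv). Writing the Dwyer map $j$ as $K\xrightarrow{\,i\,}\aT\xrightarrow{\,\iota\,}L$ with $\iota$ the inclusion of a cosieve and $i$ admitting a right adjoint $r$ with $ri=\id_K$, I would first record that, since $j$ is injective on objects, $\mathrm{ob}\,Y=\mathrm{ob}\,X\sqcup(\mathrm{ob}\,L\setminus\mathrm{ob}\,K)$ for the pushout $Y=X\cup_K L$, and the images of $X$ and of $L$ in $Y$ meet only in the image of $K$. A general morphism of $Y$, presented as an alternating word of morphisms of $X$ and of $L$ composable at objects of $K$, then reduces: because $K$ is a sieve in $L$, a morphism of $L$ with target in $K$ is already a morphism of $K$ and may be absorbed into $X$, so after reduction no word passes from $L$ back to $X$. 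Hence every morphism of $Y$ is a morphism of $X$, a morphism of $L$ with target outside $\mathrm{ob}\,K$, or a single composite $\ell\circ x$ with $x\colon a\rtarr f(k)$ in $X$ and $\ell\colon j(k)\rtarr b$ in $L$. Since $X$ and $L$ are posets, the first two shapes contribute at most one morphism between any given pair of objects; the remaining point is to show that a composite $\ell\circ x$ does not depend on the choice of intermediate object $f(k)$, and that any loop in $Y$ is confined to $X$ or to $L$ and hence trivial. For this I would quote the analysis of pushouts along Dwyer maps in \cite[\S4]{thomason80}, in which the right adjoint $r$ is exactly what identifies $\ell\circ x$ with $\ell'\circ x'$ when the intermediate objects differ. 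Granting this, $Y$ has at most one morphism between any two objects and no nontrivial loop, so $Y$ is a poset.

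The step I expect to be the main obstacle is that last one in (iv): verifying that the zigzag presentation of the hom-sets of the pushout category collapses to the normal form above without creating parallel morphisms or nontrivial loops when $X$ is glued onto $L$ along $K$. This is where the full strength of the Dwyer condition --- the cosieve factor and, above all, the right adjoint $r$ with identity unit --- is indispensable: one can exhibit a sieve inclusion of posets that is not a Dwyer map, hence lacks such an $r$, whose pushout acquires two parallel morphisms, so no part of the hypothesis is superfluous.
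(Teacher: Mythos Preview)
Your proposal is correct and aligned with the paper's treatment: the paper gives no proof beyond attributing (i), (ii), (iv), (v) to \cite[Lemma~5.6]{thomason80} and noting that (iii) is the added triviality, which is exactly your starting point. The expository sketches you supply---particularly the normal-form reduction for morphisms in the pushout in (iv) and the identification of the right adjoint $r$ as the ingredient that collapses parallel composites through different intermediate objects of $K$---are accurate and simply flesh out Thomason's argument rather than depart from it.
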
 

The second is \cite[Proposition 4.2]{thomason80}.

\begin{lem}\mylabel{thomcrux} Let $K\subset L$ be an inclusion
of simplicial sets that arises from an inclusion of ordered 
simplicial complexes. Then the induced map $\PI \Sd^2K \rtarr \PI \Sd^2 L$
is a Dwyer map in $\Cat$ and therefore, by \myref{cruxtoo}(i), in $\Pos$. 
\end{lem}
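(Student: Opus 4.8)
The plan is to reduce the assertion to a purely order-theoretic fact about sieves, using the standard combinatorial description of the iterated subdivision of an ordered simplicial complex. First I would recall, as in \cite{thomason80}, that for an ordered simplicial complex $K$ the subdivision $\Sd K$ is naturally isomorphic to the nerve $N\mathcal{P}(K)$ of its face poset $\mathcal{P}(K)$ (the nondegenerate simplices of $K$ under the face relation). Since $\PI N$ is the identity on categories, and since $N\mathcal{P}(K)$ is again (the simplicial set of) an ordered simplicial complex, namely the order complex of the poset $\mathcal{P}(K)$, applying this twice produces a natural isomorphism $\PI\Sd^2 K\cong\mathrm{sd}\,\mathcal{P}(K)$, where for a poset $Q$ I write $\mathrm{sd}\,Q$ for the poset of nonempty finite chains of $Q$ ordered by inclusion. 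Under this identification $\PI\Sd^2$ of the inclusion $K\subset L$ becomes $\mathrm{sd}$ applied to the inclusion $\mathcal{P}(K)\hookrightarrow\mathcal{P}(L)$ of the subposet of those simplices of $L$ that lie in $K$; and since $K$ is closed under passage to faces, this subposet is downward closed, i.e.\ $\mathcal{P}(K)\hookrightarrow\mathcal{P}(L)$ is the inclusion of a sieve.

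The heart of the proof would then be the statement that $\mathrm{sd}$ sends the inclusion of a sieve $\iota\colon Q_0\hookrightarrow Q$ of posets to a Dwyer map $\mathrm{sd}\,\iota\colon\mathrm{sd}\,Q_0\hookrightarrow\mathrm{sd}\,Q$, which I would prove by checking the three requirements directly. First, $\mathrm{sd}\,\iota$ is itself the inclusion of a sieve, since any subchain of a chain contained in $Q_0$ is again contained in $Q_0$. Second, I would take the intermediate category to be the full subposet $\mathcal{T}\subseteq\mathrm{sd}\,Q$ spanned by the chains $C$ with $C\cap Q_0\neq\emptyset$; this is upward closed, hence a cosieve in $\mathrm{sd}\,Q$, and it contains $\mathrm{sd}\,Q_0$ since every nonempty chain in $Q_0$ meets $Q_0$. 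Third, I would verify that $C\mapsto C\cap Q_0$ is a well-defined, order-preserving functor $r\colon\mathcal{T}\rtarr\mathrm{sd}\,Q_0$ right adjoint to the inclusion $i\colon\mathrm{sd}\,Q_0\hookrightarrow\mathcal{T}$ — using that for a chain $C'$ in $Q_0$ one has $C'\subseteq C$ if and only if $C'\subseteq C\cap Q_0$ — and that $r\com i=\id$, so that the unit of the adjunction is the identity. Together with the evident factorization $\mathrm{sd}\,\iota=\big(\mathrm{sd}\,Q_0\xrightarrow{i}\mathcal{T}\hookrightarrow\mathrm{sd}\,Q\big)$, this is precisely the Dwyer data.

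Applying this with $Q_0=\mathcal{P}(K)$ and $Q=\mathcal{P}(L)$ shows that $\PI\Sd^2 K\rtarr\PI\Sd^2 L$ is a Dwyer map in $\Cat$. To upgrade the conclusion to $\Pos$, I would note that each category appearing in the factorization — the sieve $\mathrm{sd}\,\mathcal{P}(K)$, the cosieve $\mathcal{T}$, and the ambient $\PI\Sd^2 L$ — is a poset by \myref{cruxtoo}(i) and (ii), so that $r$ is automatically a map of posets and the same data exhibit a Dwyer map in $\Pos$. I expect the one genuinely delicate point to be the first step: one needs the isomorphism $\PI\Sd^2 K\cong\mathrm{sd}\,\mathcal{P}(K)$ to be natural enough that $\PI\Sd^2$ of the given inclusion really is $\mathrm{sd}$ of the face-poset inclusion. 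It is also worth remembering that a single subdivision does not suffice — a sieve inclusion of posets need not itself be a Dwyer map — so the step from $\Sd$ to $\Sd^2$, i.e.\ from $\mathcal{P}(K)$ to $\mathrm{sd}\,\mathcal{P}(K)$, is genuinely needed; after that, all the remaining checks are short and formal.
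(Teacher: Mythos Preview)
Your argument is correct. The paper itself gives no proof of this lemma, citing instead \cite[Proposition~4.2]{thomason80}; what you have written is essentially Thomason's original argument there --- the identification $\PI\Sd^2 K\cong\mathrm{sd}\,\mathcal{P}(K)$ for ordered simplicial complexes, followed by the explicit Dwyer factorization through the cosieve $\mathcal{T}=\{C:C\cap\mathcal{P}(K)\neq\emptyset\}$ with retraction $C\mapsto C\cap\mathcal{P}(K)$. (As a small aside, your verification that $\mathrm{sd}\,\iota$ is a Dwyer map nowhere uses that $Q_0\hookrightarrow Q$ is a sieve; it goes through for any inclusion of posets, so the hypothesis you state is stronger than what your proof needs, though of course it is the case that arises.)
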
 

For completeness, we state an analogue to  \myref{cruxtoo} about Dwyer maps  in $\Cat$. 
It combines part of 
\cite[Proposition 4.3]{thomason80} with the correct parts of \cite[Lemma 5.3]{thomason80}.
We again add in a trivial statement about coproducts.

\begin{lem}\mylabel{thommore} The following statements about Dwyer maps in $\Cat$ hold.
\begin{enumerate}[(i)]
\item Any composite of Dwyer maps is a Dwyer map.
\item Any coproduct of Dwyer maps is a Dwyer map.
\item If $j\colon \aK \rtarr \aL$ is a Dwyer map and $f\colon \aK \rtarr \aC$ is a functor, then the
pushout $k\colon \aC\rtarr \aD$ of $j$ along $f$ is a Dwyer map.
\item For Dwyer maps $\aC_n\rtarr \aC_{n+1}$, $n\geq 0$, the induced map $\aC_0\rtarr \colim \aC_n$ 
is a Dwyer map.
\end{enumerate}
Therefore the same statements hold for Dwyer maps in $\Pos$. 
\end{lem}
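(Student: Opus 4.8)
The plan is to establish (i)--(iv) in turn, after recording a normalization of the Dwyer factorization. Given a Dwyer map $k\colon \aS\rtarr\aC$ with factorization $\aS\rtarr\aT\rtarr\aC$ and right adjoint $r$, I would first observe that $\aT$ may always be replaced by the smallest cosieve of $\aC$ containing $\aS$ --- the full subcategory $\aT(\aS,\aC)$ on the objects that receive a morphism from an object of $\aS$. This $\aT(\aS,\aC)$ is a cosieve in $\aC$; it is contained in $\aT$, because the cosieve property of $\aT$ applied to such a witnessing morphism puts each of its objects in $\aT$; and, being a full subcategory containing $\aS$, it inherits a right adjoint to $\aS\rtarr\aT(\aS,\aC)$ by restricting $r$, still with identity unit, since the relevant morphism sets are unchanged by fullness. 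So one may assume the middle term of any Dwyer factorization is $\aT(\aS,\aC)$, which depends only on the pair $\aS\subseteq\aC$ and grows monotonically with $\aC$; this canonicity is what makes the factorization survive the colimits below.

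Granting this, statements (i) and (ii) are bookkeeping. For a composite $\aS\rtarr\aC'\rtarr\aC$ of Dwyer maps, $\aS$ is a sieve in $\aC$ since a composite of sieve inclusions is one, and $\aT(\aS,\aC)\subseteq\aT(\aC',\aC)$, so the right adjoint $\aT(\aC',\aC)\rtarr\aC'$ is defined on $\aT(\aS,\aC)$, carries it into $\aT(\aS,\aC')$ (apply the adjunction identity to a witnessing morphism out of $\aS$), and composes with $\aT(\aS,\aC')\rtarr\aS$ to give a right adjoint $\aT(\aS,\aC)\rtarr\aS$ with identity unit; together with the fact that $\aT(\aS,\aC)\rtarr\aC$ is a cosieve inclusion, this exhibits the composite as a Dwyer map. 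For a coproduct $\coprod_\alpha k_\alpha$, a coproduct of sieve inclusions is a sieve inclusion, a coproduct of cosieve inclusions is a cosieve inclusion, and a coproduct of adjunctions with identity unit is one; this is the trivial addendum to Thomason's list. Statement (iv) is the sequential-colimit form of (i): by (i) each $\aC_0\rtarr\aC_n$ is a Dwyer map with canonical middle term $\aT(\aC_0,\aC_n)$; these nest, with union $\aT(\aC_0,\colim\aC_n)$ because every morphism of the colimit already appears at a finite stage; the right adjoints $\aT(\aC_0,\aC_n)\rtarr\aC_0$ agree on overlaps, since the coreflection of an object is computed from the morphisms into it from $\aC_0$, which are unchanged on passing to a larger full cosieve; so they assemble to a right adjoint with identity unit, while $\aC_0$ is a sieve in $\colim\aC_n$ as an increasing union of sieves.

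The real content is (iii). Write the pushout as $\aD=\aL\sqcup_{\aK}\aC$ along $j\colon \aK\rtarr\aL$ and $f\colon \aK\rtarr\aC$, with $j$ a Dwyer map with canonical factorization $\aK\rtarr\aT\rtarr\aL$, $\aT=\aT(\aK,\aL)$, and right adjoint $r\colon \aT\rtarr\aK$. First, $k\colon \aC\rtarr\aD$ is a sieve inclusion: the characteristic functor $\aL\rtarr[1]$ of the sieve $\aK$ and the constant functor $\aC\rtarr[1]$ at $0$ agree on $\aK$, hence induce $\aD\rtarr[1]$ whose preimage of $0$ is, on objects, exactly $\aC$; that this preimage really is $k(\aC)$ as a full subcategory is the standard analysis of a pushout along a sieve inclusion --- because $\aK$ is a sieve in $\aL$, a zigzag in $\aD$ between objects of $\aC$ can never leave $\aC$, so it collapses to a morphism of $\aC$. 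Now put $\mathcal{E}=\aT\sqcup_{\aK}\aC$. Pasting of pushouts gives $\aD=\aL\sqcup_{\aT}\mathcal{E}$, so $\mathcal{E}\rtarr\aD$ is the pushout of the cosieve inclusion $\aT\rtarr\aL$ and is a cosieve inclusion by the dual analysis, while $\aC\rtarr\mathcal{E}$ is a sieve inclusion by the same argument as for $k$, since $\aK$ is a sieve in $\aT$. It remains to produce a right adjoint $R\colon \mathcal{E}\rtarr\aC$ to $\aC\rtarr\mathcal{E}$ with identity unit. The candidate is forced by the pushout: $\id_{\aC}$ and $f\com r\colon \aT\rtarr\aC$ agree on $\aK$ since $r\com i=\id$, so they define $R$, which restricts to the identity on $\aC$. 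The remaining point --- and this is the main obstacle --- is to check that $R$ is a genuine right adjoint, not merely a retraction, since right adjoints are not in general preserved by pushout: one builds the counit $R\rtarr\id_{\mathcal{E}}$ by transporting the counit of $(i,r)$ from $\aT$ into $\mathcal{E}$ and verifying naturality across the zigzags that occur in the pushout, together with the triangle identities, one of which is automatic from the identity unit. This last verification, which I would carry out through the explicit description of the morphisms of $\mathcal{E}$ afforded by the sieve $\aK\subseteq\aT$, is the delicate heart of the lemma; note that the list (i)--(iv) pointedly omits stability under retracts, which, unlike (i)--(iv), genuinely fails and is not needed.

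Finally, for the transfer to $\Pos$: the functor $U$ is full and faithful, so ``Dwyer map'' means the same thing for posets whether read in $\Cat$ or in $\Pos$; statement (i) involves no colimit, the coproducts in (ii) and the sequential colimits in (iv) formed in $\Cat$ are already posets by \myref{cruxtoo}(iii) and (v) and so coincide with those in $\Pos$, and in (iii) the pushout formed in $\Cat$ along a Dwyer map of posets is already a poset by \myref{cruxtoo}(iv) and so coincides with the pushout in $\Pos$. Thus each statement descends from $\Cat$ to $\Pos$.
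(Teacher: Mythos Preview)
The paper does not prove this lemma; it simply cites Thomason's original \cite[Proposition~4.3 and Lemma~5.3]{thomason80}, adding only the trivial coproduct statement (ii) and observing that the $\Pos$ case follows via \myref{cruxtoo}. Your sketch is a correct reconstruction of Thomason's argument, and in one respect a cleaner one: the canonical-cosieve normalization $\aT(\aS,\aC)$ you introduce at the outset is not in Thomason, and it streamlines the compatibility checks in (i) and especially (iv), where it lets you argue that the coreflections at successive stages literally agree rather than having to thread a particular choice through the tower.

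For (iii), your pasting decomposition $\aD=\aL\sqcup_{\aT}\mathcal{E}$ with $\mathcal{E}=\aT\sqcup_{\aK}\aC$, the characteristic-functor argument for the sieve and cosieve conditions, and the construction of $R$ by the universal property of the pushout are exactly Thomason's line. You are right to flag the verification that $R$ is a genuine right adjoint, not merely a retraction, as the crux; the counit you describe (transported from $\aT$ along $g$) is well defined on the overlap because the triangle identity forces $\epsilon_{i(k)}=\id$, and naturality across the mixed morphisms of $\mathcal{E}$ is precisely what the explicit description of a pushout along a sieve gives you. Your final paragraph on the descent to $\Pos$ via \myref{cruxtoo}(iii)--(v) is exactly what the paper's closing sentence is pointing at.
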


\begin{cor}\mylabel{relcomp}  If $A$ is a poset and $i\colon A \rtarr X$ is a relative
$\PI \Sd^2\aJ$-complex in $\Cat$, then $X$ is a poset and $i$ is both a Dwyer 
map and a relative $\PI \Sd^2\aJ$-complex in $\Pos$. 
The same statement holds for relative $\PI \Sd^2\aI $-complexes. 
\end{cor}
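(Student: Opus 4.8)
The plan is to run the evident induction over the stages of the cell complex, invoking \myref{cruxtoo} and \myref{thommore} to see that nothing leaves $\Pos$ and that the attaching maps are Dwyer maps. By compact generation, a relative $\PI\Sd^2\aJ$-complex $i\colon A\rtarr X$ in $\Cat$ presents $X$ as $\colim X_n$ for a sequence of categories $X_0 = A$, $X_n\rtarr X_{n+1}$, in which each $X_{n+1}$ is the pushout in $\Cat$ of some $f_n\colon K_n\rtarr X_n$ along a coproduct $j_n\colon K_n\rtarr L_n$ of maps in $\PI\Sd^2\aJ$; the domains and codomains of those maps are finite posets (apply \myref{cruxtoo}(i) to $\LA^k[n]$ and $\DE[n]$), hence compact in $\Cat$, so $i$ admits such a countable presentation.

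First I would note that each map in $\PI\Sd^2\aJ$ is a Dwyer map between posets, since $\LA^k[n]\subset\DE[n]$ is an inclusion of ordered simplicial complexes and \myref{thomcrux} applies. Then I would induct on $n$. The case $X_0 = A$ holds by hypothesis. Assuming $X_n$ is a poset: by \myref{cruxtoo}(iii) the objects $K_n$ and $L_n$ are posets, and by \myref{thommore}(ii) the map $j_n$ is a Dwyer map, so \myref{cruxtoo}(iv) shows that the pushout $X_{n+1}$ formed in $\Cat$ is a poset and \myref{thommore}(iii) shows that $X_n\rtarr X_{n+1}$ is a Dwyer map. Since $P$ is left adjoint to the fully faithful $U$ and $PU=\id$, a colimit in $\Pos$ is computed by forming the colimit in $\Cat$ and applying $P$, so a $\Cat$-colimit that is already a poset computes the corresponding $\Pos$-colimit; in particular $X_{n+1}$ is equally the pushout of $f_n$ and $j_n$ in $\Pos$.

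Finally I would pass to the colimit. By \myref{cruxtoo}(v) the sequential colimit $X=\colim X_n$ taken in $\Cat$ is a poset, hence agrees with the colimit in $\Pos$; so $X$ is a poset and the filtration $\{X_n\}$, with all pushouts and the colimit now read in $\Pos$, exhibits $i$ as a relative $\PI\Sd^2\aJ$-complex in $\Pos$. That $i$ is itself a Dwyer map follows from \myref{thommore}(iv), since the maps $X_n\rtarr X_{n+1}$ are Dwyer maps. The case of a relative $\PI\Sd^2\aI$-complex is identical, using that $\pa\DE[n]\subset\DE[n]$ is also an inclusion of ordered simplicial complexes so that \myref{thomcrux} again applies.

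There is essentially no hard step here: all the genuine content sits in \myref{cruxtoo}, \myref{thomcrux}, and \myref{thommore}, all due to Thomason. The only point requiring care is the interchange of pushouts and sequential colimits between $\Cat$ and $\Pos$, and this is settled by the observation that $P$ preserves colimits and restricts to the identity on the image of $U$, so a $\Pos$-colimit can differ from the ambient $\Cat$-colimit only when the latter fails to be a poset --- which \myref{cruxtoo}(iii)--(v) guarantee does not occur in the situation at hand.
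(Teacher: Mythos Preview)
Your argument is correct and is exactly the induction the paper has in mind; the paper states this result as an immediate corollary of Lemmas~\ref{cruxtoo}, \ref{thomcrux}, and \ref{thommore} without spelling out a proof, and your write-up is the natural elaboration of that implicit argument.
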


\begin{rem}\mylabel{cofibrations}
Once the model structures on $\Pos$ and $\Cat$ are in place, the results above imply that a map $f$ between posets is a cofibration in $\Pos$ if and only if $f$ is a cofibration in $\Cat$.
\end{rem}

The real force of the introduction of Dwyer maps comes from the following result.
It combines Thomason's \cite[Proposition 4.3 and Corollary 4.4]{thomason80}.  

\begin{prop}\mylabel{key} If $j\colon \aK \rtarr \aL$ is a Dwyer map in $\Cat$, $f\colon \aK \rtarr \aC$ is a 
functor, and $\aD$ is their pushout, then the canonical map
\[  N\aL\cup_{N\aK}N\aC \rtarr N(\aL\cup_{\aK}\aC) = N\aD \]
is a weak equivalence. The same statement holds in $\Pos$.  Therefore, if $f$ is a 
weak equivalence, then so is the pushout $g\colon \aL\rtarr \aD$ of $f$ along $j$. 
\end{prop}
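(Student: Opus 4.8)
The plan is to deduce the closing ``therefore'' from the first assertion by a formal argument in $\sSet$, and to spend the real effort on the first assertion, which is essentially Thomason's \cite[Proposition~4.3]{thomason80}.

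\emph{The ``therefore''.} A Dwyer map is in particular the inclusion of a full subcategory, so $Nj$ is a monomorphism, hence a cofibration of simplicial sets. By \cite[Proposition~2.4]{thomason80}, $f$ is a weak equivalence if and only if $Nf$ is one. Since $\sSet$ is left proper, the cobase change $N\aL\rtarr N\aL\cup_{N\aK}N\aC$ of the weak equivalence $Nf$ along the cofibration $Nj$ is a weak equivalence; composing it with the comparison map $N\aL\cup_{N\aK}N\aC\rtarr N\aD$ --- a weak equivalence by the first assertion --- exhibits $Ng$ as a weak equivalence, so $g$ is a weak equivalence. (This formal deduction is Thomason's \cite[Corollary~4.4]{thomason80}.) The $\Pos$ statements require nothing new: if $j$ is a Dwyer map between posets, then by \myref{cruxtoo}(iv) the pushout $\aD$ formed in $\Cat$ is already a poset and so is also the pushout in $\Pos$, and weak equivalences of posets are detected by $N\com U$ exactly as in $\Cat$. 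Thus it suffices to prove the comparison statement, and only in $\Cat$.

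\emph{The comparison statement.} I would follow Thomason and factor the Dwyer map as $\aK\rtarr\aT\rtarr\aL$, where the first map $i$ is an inclusion with a right adjoint $r$ satisfying $r\com i=\id$ and the second map $\ell$ is a cosieve inclusion. By the pasting law the pushout splits as $\aL\cup_\aT(\aT\cup_\aK\aC)$, and --- using that $N\aK\rtarr N\aT$ and $N\aT\rtarr N\aL$ are cofibrations and that $\sSet$ is left proper --- the comparison map for $j$ is a cobase change, along a cofibration of simplicial sets, of the comparison map for $i$, followed by the comparison map for $\ell$. So it is enough to treat $i$ and $\ell$ separately. The $i$-stage is easy: since the unit of the adjunction is the identity, the triangle identity forces the counit $\epsilon\colon i\com r\Rightarrow\id_\aT$ to restrict to the identity natural transformation on $\aK$, and hence $N\epsilon$ glues with constant homotopies to display both $N\aC\rtarr N\aT\cup_{N\aK}N\aC$ and $N\aC\rtarr N(\aT\cup_\aK\aC)$ as strong deformation retracts; since the comparison map commutes with these two weak equivalences out of $N\aC$, two-out-of-three finishes this stage.

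\emph{The main obstacle: the cosieve stage.} Here a cosieve inclusion $\ell\colon\aT\rtarr\aL$ is glued to an arbitrary functor $f\colon\aT\rtarr\aC$ with pushout $\aD:=\aL\cup_\aT\aC$. Writing $\aB\subseteq\aL$ for the complementary sieve, $\aL$ is the gluing of $\aB$ and $\aT$ along the profunctor $(b,t)\mapsto\aL(b,t)$, and $\aD$ is the gluing of $\aB$ and $\aC$ along the composite profunctor $(b,c)\mapsto\colim_{t\in\aT}\aL(b,t)\times\aC(ft,c)$; in particular $\aC$ is a cosieve in $\aD$ with complementary sieve $\aB$. What must be shown is that $N$ sends the span $N\aL\leftarrow N\aT\rightarrow N\aC$ to a homotopy pushout, i.e.\ that the comparison map $N\aL\cup_{N\aT}N\aC\rtarr N\aD$ is a weak equivalence (its source already being a homotopy pushout, as $N\aT\rtarr N\aL$ is a cofibration). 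I would prove this by filtering $N\aD$ according to how many vertices lie in $\aC$ and analyzing the attaching maps, or equivalently by a Quillen Theorem~A argument for the comparison functor; the delicate point --- and the reason the cosieve hypothesis is indispensable --- is that any morphism of $\aD$ crossing from $\aB$ into $\aC$ factors through $\aT$ in a homotopically contractible way, so that the relevant comma categories are contractible. This is the content of \cite[Proposition~4.3]{thomason80}, and I expect the bookkeeping here to be the only genuinely involved part of the proof. Combining the two stages proves the comparison statement, and with the ``therefore'' paragraph the proposition follows, in $\Cat$ and hence in $\Pos$.
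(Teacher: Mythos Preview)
Your proposal is correct and follows the same line as the paper, only in far greater detail. The paper does not actually reprove the comparison statement at all: it simply records that the proposition ``combines Thomason's \cite[Proposition~4.3 and Corollary~4.4]{thomason80}'' and adds the single sentence ``The last statement is inherited from the corresponding statement in $\sSet$.'' Your ``therefore'' paragraph is exactly a spelled-out version of that one sentence (left properness of $\sSet$, $Nj$ a monomorphism hence a cofibration, then two-out-of-three), and your handling of the $\Pos$ case via \myref{cruxtoo}(iv) is precisely the implicit reason the paper can say ``the same statement holds in $\Pos$''. Your sketch of the comparison statement---factoring the Dwyer map as coreflection followed by cosieve, disposing of the coreflection stage via the deformation retraction induced by the counit, and isolating the cosieve stage as the substantive part---is a reasonable outline of Thomason's argument, and your reduction to the two stages (the comparison map for $j$ being a cobase change of that for $i$ along the cofibration $N\aT\cup_{N\aK}N\aC\rtarr N\aL\cup_{N\aK}N\aC$, followed by the comparison for $\ell$) is correct. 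In short: same approach, just elaborated where the paper is content to cite.
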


The last statement is inherited from the corresponding statement in $\sSet$. 

\begin{rem}\mylabel{cis} The incorrect part of \cite[Lemma 5.3]{thomason80} states that a retract 
of a Dwyer map is a Dwyer map.  As noticed by Cisinski \cite{cisinski99}, that is not true. He gave an example 
to show that a retract of a cofibration in $\Cat$ need not be a Dwyer map, which invalidates the proof 
that $\Cat$ is left proper given in \cite[Corollary 5.5]{thomason80}.  He introduced the slightly 
more general notion of a pseudo Dwyer map to get around this.  He proved that a retract of a pseudo Dwyer
map is a pseudo Dwyer map, so that any cofibration in $\Cat$ is a pseudo Dwyer map.   He then used that to
give a correct proof that $\Cat$ is left proper, and he observed that our Lemmas \ref{thomcrux} and \ref{thommore}
remain true with Dwyer maps replaced by pseudo Dwyer maps.
\end{rem}

The problem discussed in the remark does not arise when dealing with $\Pos$, where Dwyer maps and
pseudo Dwyer maps coincide, as follows directly from the definition of the latter.  Since we are omitting 
that definition, we give a simple direct proof of the following result.
Once the model structure is in place, it gives that cofibrations in $\Pos$ are Dwyer maps.
This highlights the technical convenience of posets, as compared with general categories.

\begin{lem}\mylabel{retract} A retract of a Dwyer map in $\Pos$ is a Dwyer map.  Therefore retracts in
$\Pos$ of relative $\PI\Sd^2\aI$-complexes are Dwyer maps.
\end{lem}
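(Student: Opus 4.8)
The plan is to make the definition of a Dwyer map fully explicit in $\Pos$ and then chase the retract diagram directly. First I would record the poset dictionary: a full subposet inclusion $\aS \hookrightarrow \aC$ is a sieve exactly when $\aS$ is a down-set (order ideal), and a cosieve exactly when $\aS$ is an up-set; moreover, for a full subposet inclusion $i\colon \aS \hookrightarrow \aT$ admitting a right adjoint $r$, the unit of $(i \dashv r)$ is automatically the identity, since every endomorphism in a poset is an identity. Thus a Dwyer map in $\Pos$ is precisely a down-set inclusion $\aS \hookrightarrow \aC$ admitting a factorization $\aS \hookrightarrow \aT \hookrightarrow \aC$ in which $\aT$ is an up-set of $\aC$ containing $\aS$ and the inclusion $\aS \hookrightarrow \aT$ has a right adjoint $r\colon \aT \rtarr \aS$; equivalently, for every $t \in \aT$ the set $\{s \in \aS : s \le t\}$ has a greatest element, which is $r(t)$.

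Now suppose $d'\colon \aS' \rtarr \aC'$ is a retract of a Dwyer map $d\colon \aS \rtarr \aC$, witnessed by order-preserving maps $\alpha\colon \aS' \rtarr \aS$, $\beta\colon \aS \rtarr \aS'$, $\gamma\colon \aC' \rtarr \aC$, $\delta\colon \aC \rtarr \aC'$ with $\beta\alpha = \id$, $\delta\gamma = \id$, $d\alpha = \gamma d'$, and $d'\beta = \delta d$. A two-line chase, using that $d$ is injective and order-reflecting together with $\beta\alpha = \id$, shows $d'$ is injective and order-reflecting as well, so I may identify $\aS'$ with a full subposet of $\aC'$ (and likewise $\aS \subseteq \aT \subseteq \aC$), with $d, d', i, j$ all inclusions; then $\alpha = \gamma|_{\aS'}$ and $\beta = \delta|_{\aS}$. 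To see that $\aS'$ is a down-set of $\aC'$, I would take $b \le a$ with $a \in \aS'$: then $\gamma(b) \le \gamma(a) = \alpha(a) \in \aS$, hence $\gamma(b) \in \aS$ since $\aS$ is a down-set, and therefore $b = \delta\gamma(b) = \beta(\gamma(b)) \in \aS'$. So $d'$ is a sieve inclusion, and it remains to build the Dwyer factorization.

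For the factorization I would use the candidates transported along the retraction: set $\aT' := \gamma^{-1}(\aT) = \{\, b \in \aC' : \gamma(b) \in \aT \,\}$ and define $r'\colon \aT' \rtarr \aS'$ by $r'(b) := \delta(r(\gamma(b)))$. The routine checks are that $\aS' \subseteq \aT'$ (since $\gamma(a) = \alpha(a) \in \aS \subseteq \aT$ for $a \in \aS'$), that $\aT'$ is an up-set of $\aC'$ (since $\gamma$ is order-preserving and $\aT$ is an up-set of $\aC$), and that $r'$ lands in $\aS'$ (since $\delta(r(\gamma(b))) = \beta(r(\gamma(b)))$, as $r(\gamma(b)) \in \aS$). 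Then I would verify that $r'$ is right adjoint to the inclusion $\aS' \hookrightarrow \aT'$: the counit inequality $r'(b) \le b$ comes from applying $\delta$ to $r(\gamma(b)) \le \gamma(b)$; the identity $r'(a) = a$ for $a \in \aS'$ (which forces the unit to be the identity) comes from $r(\gamma(a)) = r(\alpha(a)) = \alpha(a)$ together with $\delta\alpha(a) = \beta\alpha(a) = a$; and the remaining implication $a \le b \Rightarrow a \le r'(b)$ follows because $a \le b$ in $\aC'$ gives $\gamma(a) \le \gamma(b)$ in $\aT$ with $\gamma(a) \in \aS$, whence $\gamma(a) \le r(\gamma(b))$ by the adjunction defining $r$, and applying $\delta$ yields $a = \delta\gamma(a) \le \delta(r(\gamma(b))) = r'(b)$. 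This would exhibit $d'$ as the down-set inclusion $\aS' \hookrightarrow \aT'$ followed by the up-set inclusion $\aT' \hookrightarrow \aC'$, with $\aS' \hookrightarrow \aT'$ admitting the right adjoint $r'$ whose unit is the identity; hence $d'$ is a Dwyer map. The final sentence of the lemma is then immediate: by \myref{relcomp} every relative $\PI\Sd^2\aI$-complex in $\Pos$ is a Dwyer map, so every retract of one is too.

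The hard part will be the verification that the transported map $r'$ really is a right adjoint, and in particular that $r'|_{\aS'} = \id$; this is exactly where the poset hypothesis is essential, and where the argument must differ from the categorical case. For general categories the analogous recipe need not produce an honest right adjoint whose unit is an identity, which is precisely why in $\Cat$ a retract of a Dwyer map can only be guaranteed to be a pseudo Dwyer map (compare \myref{cis}). Everything else is bookkeeping with down-sets, up-sets, and the four structure maps of the retract.
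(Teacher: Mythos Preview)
Your proof is correct and follows essentially the same approach as the paper's: you transport the Dwyer factorization along the retract by taking the candidate cosieve to be $\aT' = \gamma^{-1}(\aT)$ and the candidate right adjoint to be $r' = \delta \circ r \circ \gamma$, which is exactly the paper's $\sigma = r \circ \tau|_{T\cap X}$ once one identifies the retract sections with inclusions (so that $\gamma^{-1}(\aT)$ becomes $T\cap X$ and $\delta|_{\aS}$ becomes the paper's retraction $r$). The verification of the sieve, cosieve, and adjunction properties is the same three-step check in both.
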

\begin{proof}
Consider the following diagram of posets, which commutes with $\si$ and $\ta$ omitted. 
All unlabeled arrows are inclusions. 
\[\xymatrix{
A \ar[rrr] \ar[dd] \ar@<.5ex>[dr] & & & B \ar[dd] \ar@<-.5ex>[dl] \ar[rr]^-{r} & &A \ar[dd]\\
& T\cap X \ar[dl]  \ar[r] \ar@<.5ex>[ul]^{\si} & T \ar[dr] \ar@<-.5ex>[ur]_{\ta}   &  & \\
X \ar[rrr] & & & Y \ar[rr]_-{s} & & X
\\} \]
We assume that $r$ restricts to the identity on $A$ and $s$ restricts to the identity on $X$.
We also assume that $B\rtarr Y$ is a sieve, $T\rtarr Y$ is a cosieve, and 
$\ta$ is right adjoint to the inclusion $B\rtarr T$ with unit the identity, so that $\ta$ restricts
to the identity on $B$.  We define $\si$ to be the restriction of $r\com \ta$ to $T\cap X$.  The 
following observations prove that $A\rtarr X$ is a Dwyer map.\\
(i) The restriction $T\cap X \rtarr X$ of the cosieve $T\rtarr Y$ is again a cosieve.\\
{\em Proof.} If $w\in T\cap X$ and $w\leq x$ in $X$, then $x\in T$, hence $x\in T\cap X$.\\ 
(ii) The restriction $A\rtarr X$ of the sieve $B\rtarr Y$ is again a sieve.\\
{\em Proof.}  If $a\in A$, $x\in X$, and $x\leq a$,
then $x\in B$ since $B\rtarr Y$ is a sieve, and then $x = s(x) = r(x) \leq r(a) = a$ in $A$.\\ 
(iii) $\si$ is right adjoint to the inclusion $A\rtarr T\cap X$, with unit the identity map.\\
{\em Proof.}  $\si$ restricts to the identity on $A$ since if $a\in A$, then 
$$\si(a) = (r\com \ta)(a) = r(a) = a.$$  
For the adjunction, we must
show that if $a\in A$ and $x\in T\cap X$, then $a\leq x$  if and only if $a \leq \si(x)$.  If $a\leq x$, then 
$a = \si(a) \leq \si(x)$.  Suppose $a \leq \si(x)$ and note that $\si(x) =(r\com \ta)(x) = (s\com \ta)(x)$. 
Since $\ta$ is right adjoint to $B\rtarr T$, the counit of the adjunction gives that $\ta(y) \leq y$ for any $y\in T$. 
Thus $(s\com \ta)(x) \leq s(x) = x$. 
\end{proof}

\begin{proof}[Proof of Theorems \ref{Thom} and \ref{OldMain}]   The heart of Thomason's proof of 
\myref{Thom} is the verification of condition (ii) of \myref{Kan}.  Since coproducts and colimits of weak
equivalences are weak equivalences, this reduces to showing that the pushouts in the construction
of relative $\aJ$-complexes are weak equivalences.  But that is immediate from \myref{key}.  Since 
a relative $P\PI\Sd^2\aJ$-complex in $\Pos$ is a special case of a relative $\PI\Sd^2\aJ$-complex in 
$\Cat$, condition (ii) of \myref{Kan} holds in $\Pos$ since it is a special case of the
condition in $\Cat$. This proves that $\Cat$ and $\Pos$ are compactly generated model categories. 
In view of \myref{retract}, 
\myref{key} also implies that $\Pos$ is left proper and therefore proper.  As pointed out in  \myref{cis},
Cisinski \cite{cisinski99} proves that $\Cat$ is left proper and therefore proper.

It remains to show that the adjunctions $(\PI \Sd^2,\Ex^2N)$ and $(P,U)$ are Quillen equivalences.  
To show that $(\PI \Sd^2,\Ex^2N)$ is a Quillen equivalence, it suffices to show that the composite 
$\Ex^2N$ induces an equivalence between the homotopy categories of $\Cat$ and $\sSet$. Quillen 
\cite[Ch.~VI, Corollaire 3.3.1]{illusie} proved that the nerve $N$ induces an equivalence.
Kan \cite[Ch.~III, Theorem 4.6]{goerssjardine} proved that $\Ex$ and therefore $\Ex^2$ induces an 
equivalence by showing that there is a natural weak equivalence $K\rtarr \Ex K$ for simplicial sets $K$.

To show that $(P,U)$ is a Quillen equivalence, it suffices to show  
that for all cofibrant categories $\aC\in \Cat$ and all fibrant posets $X\in \Pos$, a 
functor $f\colon \aC \rtarr U X$ is a weak equivalence if and only if its adjunct
$\tilde f\colon P\aC \rtarr X$ is a weak equivalence.  Since $\aC$ is cofibrant, 
it is a poset, hence $\aC = UY$ for a poset $Y$.  But then 
$U\tilde f = f$ and the conclusion holds by the definition of weak equivalences in $\Pos$. 
\end{proof}

\begin{rem} The fact that $\PI\Sd^2K$ is a poset for any simplicial set $K$ is closely related to the 
less well-known fact that $\Sd^2 \aC$ is a poset for any category $\aC$. However, the subdivision functor 
on $\Cat$ plays no role in Thomason's work or ours.  The relation between these subdivision functors is 
studied in \cite{delhoyo} and \cite{finite}. 
\end{rem}

\section{Equivariant Dwyer maps and cofibrations}

To mimic the arguments just given equivariantly, we introduce equivariant Dwyer maps and relate them to cofibrations in $\Pos$.

\begin{defn} A functor $k\colon \aS\rtarr \aC$ in $G\Cat$ or in $G\Pos$ is a Dwyer $G$-map 
if $k$ is the inclusion of a sieve and $k$ factors in $G\Cat$ as a composite 
\[  \xymatrix@1{\aS\ar[r]^-{i} & \aT \ar[r]^-{j} & \aC, \\} \] 
where $j$ is the inclusion of a cosieve and $i$ is an inclusion with a right adjoint 
$r\colon \aT\rtarr \aS$ in $G\Cat$ such that the unit $\id\rtarr r\com i$ 
of the adjunction is the identity.\footnote{Since the unit is the identity, the pair $(i,r)$ is automatically an adjunction in the $2$-category 
of $G$-objects in $\Cat$, equivariant functors, and equivariant natural transformations.}
\end{defn}

The following two lemmas are immediate from the definition.  

\begin{lem}\mylabel{fixDwyer} If $k$ is a Dwyer $G$-map, then $k^H$ is a Dwyer map for any subgroup $H$ of $G$.
\end{lem}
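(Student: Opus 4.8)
The plan is to apply the $H$-fixed-point functor $(-)^H$, which sends $G\Cat$ to $\Cat$ and $G\Pos$ to $\Pos$, to the factorization witnessing that $k$ is a Dwyer $G$-map, and to check that each ingredient in the definition of a Dwyer map survives. Writing that factorization as $\aS \xrightarrow{i} \aT \xrightarrow{j} \aC$, I would show that $\aS^H \xrightarrow{i^H} \aT^H \xrightarrow{j^H} \aC^H$ exhibits $k^H$ as a Dwyer map.

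First I would dispatch the elementary bookkeeping. The functor $(-)^H$ carries a full $G$-subcategory inclusion to a full subcategory inclusion, since the objects and morphisms of $\aS^H$ are exactly the $H$-fixed objects and morphisms of $\aC$ lying in $\aS$, and fullness is inherited. It also carries sieves to sieves: if $f\colon c\rtarr s$ is an $H$-fixed morphism of $\aC$ with $s\in\aS^H$, then the sieve condition in $\aC$ forces $c$ and $f$ into $\aS$, and both are $H$-fixed because the $G$-action is compatible with source and target, so $c,f\in\aS^H$; the dual argument handles cosieves. This already gives that $k^H$ is the inclusion of a sieve in $\aC^H$ and that $j^H$ is the inclusion of a cosieve in $\aC^H$. (When $k$ is a Dwyer $G$-map in $G\Pos$, note in addition that $\aT$, being a full $G$-subcategory of the $G$-poset $\aC$, is itself a $G$-poset, so $\aT^H$ is a poset and $k^H$ is a Dwyer map in $\Pos$.)

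The one point deserving a little care — and the step I would highlight — is that $i^H$ still has a right adjoint whose unit is the identity. For this I would invoke that $(-)^H$ is a $2$-functor on the $2$-category of $G$-objects in $\Cat$, equivariant functors, and equivariant natural transformations (exactly the remark flagged in the footnote to the definition of a Dwyer $G$-map), so it preserves adjunctions together with their units. Applying it to the equivariant adjunction $i\dashv r$ whose unit $\id\rtarr r\com i$ is the identity yields $i^H\dashv r^H$ with unit the identity. A minor supplementary check is that $r^H$ genuinely lands in $\aS^H$; this is automatic, since $r\com i=\id$ means $r$ restricts to the identity on $\aS$ and hence preserves $H$-fixedness. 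That completes the verification, and the same reasoning applies word for word in $G\Pos$. I anticipate no real obstacle here: as the paper says, the lemma is immediate from the definition, the only substantive inputs being the standard facts that the fixed-point functor preserves sieves, cosieves, and equivariant adjunctions.
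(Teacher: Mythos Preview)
Your proposal is correct and is precisely the verification the paper leaves implicit: the paper's own ``proof'' is simply the sentence that the lemma is immediate from the definition, and what you have written is a careful unwinding of that definition under the fixed-point functor. There is nothing to add or change.
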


Regard the $G$-set $G/H$ as a discrete $G$-category (identity morphisms only).

\begin{lem} If $j\colon K\subset L$ is a Dwyer map and $H$ is a subgroup of $G$, then
$\id\times j\colon G/H\times K \rtarr G/H \times L$ is a Dwyer $G$-map. \end{lem}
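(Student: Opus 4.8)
The plan is to transport the given Dwyer factorization of $j$ along the functor $G/H\times(-)$ and to check that the three defining clauses of a Dwyer $G$-map are preserved. Since $j\colon K\subset L$ is a Dwyer map, $K\subset L$ is a sieve and $j$ factors as a composite $K\rtarr T\rtarr L$, say $i_0\colon K\rtarr T$ and $j_0\colon T\rtarr L$, where $j_0$ is the inclusion of a cosieve and $i_0$ is an inclusion admitting a right adjoint $r_0\colon T\rtarr K$ whose unit $\id\rtarr r_0\com i_0$ is the identity. Equip $K$, $T$, and $L$ with the trivial $G$-action, so that applying $G/H\times(-)$ (with $G$ acting only on the $G/H$ factor, and noting that $G/H\times(-)$ preserves posets because $G/H$ is discrete) yields the candidate factorization
\[ \xymatrix@1{G/H\times K\ar[r]^-{\id\times i_0} & G/H\times T\ar[r]^-{\id\times j_0} & G/H\times L} \]
of $\id\times j$ in $G\Cat$ (or in $G\Pos$). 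I claim this exhibits $\id\times j$ as a Dwyer $G$-map.

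First I would isolate the observation that does all of the work: since $G/H$ is a discrete category, every morphism of $G/H\times\aC$ lies in a single slice $\{gH\}\times\aC$, which is isomorphic to $\aC$. Hence the sieve and cosieve conditions in $G/H\times\aC$ are verified one slice at a time and reduce to the corresponding nonequivariant statements about subcategories of $\aC$. Thus, from the fact that $K\subset L$ is a sieve one gets that $G/H\times K$ is a sieve in $G/H\times L$, and it is plainly a $G$-subcategory; similarly, from the fact that $T\subset L$ is a cosieve one gets that $G/H\times T$ is a $G$-cosieve in $G/H\times L$. The composite of the two displayed inclusions is $\id\times j$ itself, so $\id\times j$ is the inclusion of an equivariant sieve, as required.

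Next I would handle the adjoint retraction. The functor $\id\times r_0\colon G/H\times T\rtarr G/H\times K$ is $G$-equivariant, since $G$ acts only through the $G/H$ coordinate, on which $\id\times r_0$ is the identity. Combining $i_0\dashv r_0$ in $\Cat$ with the trivial adjunction $\id_{G/H}\dashv\id_{G/H}$ gives the product adjunction $\id\times i_0\dashv\id\times r_0$, whose unit is the product of the two units and so is the identity. By the observation recorded in the footnote to the definition of a Dwyer $G$-map, an ordinary adjunction between $G$-functors with identity unit is automatically an adjunction in the $2$-category of $G$-objects, so $(\id\times i_0,\id\times r_0)$ is an equivariant adjoint pair with identity unit. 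Assembling this with the previous paragraph proves that $\id\times j$ is a Dwyer $G$-map. The argument is essentially formal and I do not expect a real obstacle; the only mild subtlety is the promotion of the ordinary adjunction $\id\times i_0\dashv\id\times r_0$ to an equivariant one, and that is precisely what the cited footnote dispatches, while everything else follows from $G/H$ being discrete.
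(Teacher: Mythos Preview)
Your proof is correct and is precisely the unpacking that the paper leaves implicit: the authors declare this lemma ``immediate from the definition'' and give no argument. Your slice-by-slice verification of the sieve and cosieve conditions together with the product adjunction $\id\times i_0\dashv\id\times r_0$ is exactly what ``immediate'' means here, so there is nothing to compare.
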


We have the equivariant analogues of \myref{thommore} and \myref{relcomp}, with the same proofs.

\begin{lem}\mylabel{thommore2} The following statements about Dwyer $G$-maps in $G\Cat$ hold.
\begin{enumerate}[(i)]
\item Any composite of Dwyer $G$-maps is a Dwyer $G$-map.
\item Any coproduct of Dwyer $G$-maps is a Dwyer $G$-map.
\item If $j\colon \aK \rtarr \aL$ is a Dwyer $G$-map and $f\colon \aK \rtarr \aC$ is a $G$-map, then the
pushout $k\colon \aC\rtarr \aD$ of $j$ along $f$ is a Dwyer $G$-map.
\item For Dwyer $G$-maps $\aC_n\rtarr \aC_{n+1}$, $n\geq 0$, the induced map $\aC_0\rtarr \colim \aC_n$ 
is a Dwyer $G$-map.
\end{enumerate}
Therefore the same statements hold for Dwyer $G$-maps in $G\Pos$. 
\end{lem}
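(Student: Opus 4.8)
The plan is to prove each of the four statements (i)--(iv) for Dwyer $G$-maps in $G\Cat$ by reducing, as directly as possible, to the corresponding non-equivariant statements from \myref{thommore}, and then to deduce the $G\Pos$ version via \myref{cruxtoo}(i) together with the fact that $P$ is a left adjoint and preserves the relevant colimits. The underlying principle is that all the data involved in the definition of a Dwyer $G$-map---the sieve $\aS$, the intermediate cosieve $\aT$, the right adjoint $r\colon\aT\rtarr\aS$, and the condition that the unit is the identity---are simply the non-equivariant data equipped with compatible $G$-actions, and that colimits (pushouts, sequential colimits, coproducts) in $G\Cat$ are computed on underlying categories. So in each case one takes the construction furnished by \myref{thommore}, observes it is manifestly $G$-equivariant because it was built functorially out of $G$-equivariant input, and checks that the right adjoint produced is again a $G$-functor with identity unit.

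For (i), given Dwyer $G$-maps $\aS\rtarr\aT\rtarr\aC$, factor each through its intermediate object, splice the cosieve inclusions and the left-adjoint inclusions; the composite of cosieves is a cosieve, the composite of sieves is a sieve (using the observation recorded in the definition of sieve that a sieve into a sieve is a sieve), the composite of the two ``inclusion with right adjoint, identity unit'' maps again has this property with right adjoint the composite of the two retractions, and all of this is $G$-equivariant since each piece is. For (ii), coproducts in $G\Cat$ are disjoint unions with the induced $G$-action, so a coproduct of the factorizations is a factorization of the coproduct; the right adjoint on the coproduct is the coproduct of the right adjoints, equivariant because each summand's data is equivariant and $G$ permutes nothing within a fixed summand beyond what it already did. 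For (iv), the sequential colimit $\colim\aC_n$ in $G\Cat$ is computed on underlying categories with the colimit $G$-action; the intermediate cosieves and left adjoints assemble into a factorization of $\aC_0\rtarr\colim\aC_n$ exactly as in the non-equivariant proof, and the colimit of equivariant retractions is an equivariant retraction with identity unit.

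The step that needs the most care---and the one I would single out as the main obstacle---is (iii), the pushout case, because this is where the non-equivariant proof of \myref{thommore}(iii) (Thomason's Proposition~4.3) does real work: one must produce the intermediate cosieve in the pushout $\aD$ and, crucially, the right adjoint to the relevant inclusion. The key point to verify is that when $j\colon\aK\rtarr\aL$ is a Dwyer $G$-map with factorization $\aK\rtarr\aM\rtarr\aL$ and $f\colon\aK\rtarr\aC$ is a $G$-functor, the pushout $\aD=\aL\cup_{\aK}\aC$ in $G\Cat$ has underlying category the pushout in $\Cat$ (true since the forgetful functor $G\Cat\rtarr\Cat$ creates colimits), so Thomason's construction of the cosieve $\aL\cup_{\aM}\aC\subset\aD$ and of the right adjoint $\aL\cup_{\aM}\aC\rtarr\aC$ goes through verbatim; one then only has to observe that every object and morphism produced is $G$-fixed in the appropriate sense, i.e.\ that the constructed functors commute with the $G$-action, which is automatic because the $G$-action on $\aD$ is the one induced from the $G$-actions on $\aL$, $\aK$, $\aC$ and Thomason's formulas are natural in the input. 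Naturality of Thomason's constructions is the crux: once one checks that the right adjoint he builds is given by a formula natural in the pushout diagram, equivariance is formal.

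Finally, for the ``therefore'' clause about $G\Pos$: given a Dwyer $G$-map between $G$-posets, applying any of the constructions (i)--(iv) in $G\Cat$ produces a Dwyer $G$-map whose source and target are posets; by \myref{cruxtoo}(i) (more precisely, by the non-equivariant facts that subcategories, coproducts, Dwyer pushouts, and sequential colimits of posets in $\Cat$ are posets, applied to underlying categories) the intermediate objects and the pushout/colimit are again posets, so the Dwyer $G$-map constructed in $G\Cat$ already lives in $G\Pos$. Alternatively one can apply $P$ to the $G\Cat$ construction and use $P\com U=\id$; either way the $G\Pos$ statements follow immediately, exactly as in the non-equivariant passage from \myref{thommore} for $\Cat$ to \myref{thommore} for $\Pos$.
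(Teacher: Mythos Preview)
Your proposal is correct and matches the paper's approach exactly: the paper states just before this lemma that it is ``the equivariant analogue of \myref{thommore}\ldots with the same proofs,'' and offers no further argument. Your elaboration---that colimits in $G\Cat$ are created in $\Cat$ and that Thomason's constructions are natural in the input, hence automatically $G$-equivariant---is precisely what ``with the same proofs'' means here. One small citation slip: for the $G\Pos$ deduction you invoke \myref{cruxtoo}(i), but the relevant parts are (ii)--(v); you catch this yourself in the ``more precisely'' parenthetical.
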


Let $G\PI \Sd^2\aI$ and $G\PI \Sd^2\aJ$ denote the sets of all $G$-maps that are of the form $\id\times j\colon G/H\times K \rtarr G/H \times L$,
where $j$ is in $\PI \Sd^2\aI$ or $\PI \Sd^2\aJ$.  These are the generating cofibrations and generating acyclic 
cofibrations in $G\Cat$.  

\begin{cor}\mylabel{relcomp2}  If $A$ is a $G$-poset and $i\colon A \rtarr X$ is a relative
$G\PI \Sd^2\aJ$-complex in $G\Cat$, then $X$ is a $G$-poset and $i$ is both a Dwyer 
$G$-map and a relative $G\PI \Sd^2\aJ$-complex in $G\Pos$. 
The same statement holds for relative $G\PI \Sd^2\aI $-complexes. 
\end{cor}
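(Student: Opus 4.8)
The plan is to reduce Corollary~\ref{relcomp2} to its nonequivariant counterpart, Corollary~\ref{relcomp}, by building the relative $G\PI\Sd^2\aJ$-complex one pushout and one colimit stage at a time and checking at each stage that the underlying object remains a $G$-poset, that the structure map is a Dwyer $G$-map, and that the pushout in $G\Cat$ agrees with the pushout in $G\Pos$. First I would recall that colimits in $G\Cat$ and in $G\Pos$ are computed on underlying categories (respectively posets) with the induced $G$-action, since the forgetful functors $G\Cat\rtarr\Cat$ and $G\Pos\rtarr\Pos$ create colimits. So a relative $G\PI\Sd^2\aJ$-complex $i\colon A\rtarr X=\colim X_n$ in $G\Cat$, with $X_0=A$ and each $X_{n+1}$ a pushout of a coproduct of generating maps $\id\times j_\alpha\colon G/H_\alpha\times K_\alpha\rtarr G/H_\alpha\times L_\alpha$ along an attaching $G$-map, has an underlying (nonequivariant) diagram of exactly the same shape: the underlying map of each generator $\id\times j_\alpha$ is, up to the isomorphism $G/H_\alpha\times K_\alpha\cong\coprod_{G/H_\alpha} K_\alpha$, a coproduct of copies of $j_\alpha$, hence the underlying map of the coproduct $\coprod_\alpha(\id\times j_\alpha)$ is a coproduct of maps in $\PI\Sd^2\aJ$.

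Next I would argue that forgetting the $G$-action turns the given tower into a presentation of $Ui$ as a relative $\PI\Sd^2\aJ$-complex in $\Cat$: each $X_{n+1}$ is the pushout in $\Cat$ of a coproduct of maps in $\PI\Sd^2\aJ$ along the underlying attaching map, and $X=\colim X_n$ in $\Cat$. Corollary~\ref{relcomp} then applies to this underlying data and gives at once that each underlying $X_n$ is a poset, that $X$ is a poset, and that the underlying map $Ui\colon A\rtarr X$ is a Dwyer map and a relative $\PI\Sd^2\aJ$-complex in $\Pos$. In particular every object in sight is a $G$-poset (a $G$-object whose underlying category is a poset is exactly a $G$-poset), and each pushout square~\eqref{push} in $G\Cat$ already lands in $G\Pos$, so it is equally the pushout in $G\Pos$ — no application of the left adjoint $P$ is needed, just as in the nonequivariant case. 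Thus $i\colon A\rtarr X$ is a relative $G\PI\Sd^2\aJ$-complex in $G\Pos$.

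It remains to see that $i$ is a Dwyer $G$-map. Here I would invoke Lemma~\ref{thommore2}: by Lemma~\ref{thomcrux} each $j_\alpha\colon K_\alpha\rtarr L_\alpha$ is a Dwyer map, so by the (nonequivariant) statement that products with $G/H$ of Dwyer maps are Dwyer $G$-maps, each generator $\id\times j_\alpha$ is a Dwyer $G$-map; by Lemma~\ref{thommore2}(ii) a coproduct of these is a Dwyer $G$-map; by Lemma~\ref{thommore2}(iii) each $X_n\rtarr X_{n+1}$ is then a Dwyer $G$-map; and by Lemma~\ref{thommore2}(iv) the composite $i\colon A=X_0\rtarr\colim X_n=X$ is a Dwyer $G$-map, in $G\Cat$ and hence — since everything is a $G$-poset — in $G\Pos$. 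The relative $G\PI\Sd^2\aI$-complex case is identical, replacing $\aJ$ by $\aI$ throughout and using the $\aI$ half of Corollary~\ref{relcomp}. The only point requiring any care, and the one I would single out as the main (mild) obstacle, is the bookkeeping identifying the underlying diagram of a relative $G\PI\Sd^2\aJ$-complex with a relative $\PI\Sd^2\aJ$-complex — i.e. checking that $U$ commutes with the relevant colimits and that $U(G/H\times K)$ is the appropriate coproduct of copies of $K$ — after which the equivariant statement is a formal consequence of the nonequivariant one together with Lemma~\ref{thommore2}.
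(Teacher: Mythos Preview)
Your proof is correct and is essentially the argument the paper has in mind. The paper gives no proof beyond the remark ``with the same proofs,'' meaning one assembles the result from Lemma~\ref{thomcrux}, the lemma that $\id\times j$ is a Dwyer $G$-map, and Lemma~\ref{thommore2} exactly as the nonequivariant Corollary~\ref{relcomp} is assembled from Lemmas~\ref{cruxtoo}, \ref{thomcrux}, and \ref{thommore}; your write-up makes this explicit, with the useful observation that the forgetful functor $G\Cat\rtarr\Cat$ creates colimits so that the underlying tower is literally a relative $\PI\Sd^2\aJ$-complex, allowing a direct appeal to Corollary~\ref{relcomp} for the poset claims.
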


We also have the equivariant analogue of \myref{retract}.

\begin{lem}\mylabel{retract2} A retract of a Dwyer $G$-map in $G\Pos$ is a Dwyer $G$-map.  Therefore 
all cofibrations in $G\Pos$ are Dwyer $G$-maps.
\end{lem}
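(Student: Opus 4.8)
The plan is to carry out the proof of \myref{retract} verbatim, simply observing that every poset and map constructed there is automatically equivariant, and then to deduce the statement about cofibrations from \myref{relcomp2}. First I would unwind what a retract of a Dwyer $G$-map is: a retract in $G\Pos$ is a retract diagram of $G$-posets and equivariant order-preserving maps, so we are handed the diagram in the proof of \myref{retract} with all objects $G$-posets and all solid arrows $G$-maps, exhibiting $A \rtarr X$ as a retract of a Dwyer $G$-map $B \rtarr Y$. By the definition of a Dwyer $G$-map, the Dwyer factorization $B \rtarr T \rtarr Y$ takes place in $G\Cat$; hence $T$ is a $G$-poset, the inclusions $B \rtarr T$ and $T \rtarr Y$ are $G$-maps, and the right adjoint $\ta\colon T \rtarr B$ of $B\rtarr T$ is a $G$-map whose unit is the identity.

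With this in hand, the two things to check that were invisible nonequivariantly are that $T \cap X$ is a $G$-subposet of $Y$ and that $\si$ is a $G$-map; both are immediate, the first because $T$ and $X$ are $G$-subposets of the $G$-poset $Y$, the second because $\si$ is the restriction of the composite $r \com \ta$ of the $G$-maps $\ta$ and $r$. Once these are noted, the three verifications (i)--(iii) in the proof of \myref{retract} — that $T \cap X \rtarr X$ is a cosieve, that $A \rtarr X$ is a sieve, and that $\si$ is right adjoint to the inclusion $A \rtarr T \cap X$ with unit the identity — are purely order-theoretic and never refer to the $G$-action, so they go through word for word. This exhibits $A \rtarr X$ as the inclusion of a ($G$-)sieve factoring in $G\Pos$ through the cosieve $T \cap X \rtarr X$ via an inclusion with equivariant right adjoint and identity unit; that is, as a Dwyer $G$-map.

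For the final sentence, I would argue as for any compactly generated model category: once the model structure of \myref{Main} is in place, every cofibration $f\colon A \rtarr B$ in $G\Pos$ is a retract of a relative $G\PI \Sd^2\aI$-complex, obtained by factoring $f$ via the small object argument as $A \rtarr X \rtarr B$ with $A \rtarr X$ a relative $G\PI \Sd^2\aI$-complex and $X \rtarr B$ an acyclic fibration, and then using the lifting property of $f$ against $X \rtarr B$ to realize $f$ as a retract of $A \rtarr X$. By \myref{relcomp2} the map $A \rtarr X$ is a Dwyer $G$-map, and by the first part of the lemma so is its retract $f$. I do not anticipate a real obstacle: the entire content is that the construction in the proof of \myref{retract} is sufficiently natural to be performed equivariantly, which is guaranteed by the very definition of a Dwyer $G$-map (that its defining factorization lives in $G\Cat$); the only new bookkeeping is the one-line observation that $T \cap X$ is $G$-invariant and $\si$ is equivariant.
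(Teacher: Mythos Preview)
Your proposal is correct and is exactly what the paper intends: the paper gives no separate proof of this lemma, merely stating it as the equivariant analogue of \myref{retract}, so the expected argument is precisely the one you describe---rerun the proof of \myref{retract} noting that $T\cap X$ is $G$-stable and $\si = r\com\ta|_{T\cap X}$ is equivariant, then invoke \myref{relcomp2} and the retract characterization of cofibrations once the model structure of \myref{Main} is in place.
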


We require a description of pushouts inside $G\Pos$.  The following is a
simplification of \cite[Lemma 2.5]{manyauthors}.  

\begin{lem}\mylabel{pushpos} 
  Let $j\colon K\rtarr L$ be a sieve of $G$-posets and $f\colon K\rtarr X$ be a
  map of $G$-posets. Consider the set $Y= (L\setminus K) \amalg X$ with the
  order relation given by restriction on $L\setminus K$ and on $X$, with the
  additional relation that for $x\in X$ and $y\in L\setminus K$, $x\leq y$ if there
  exists $w\in K$ such that $x \leq f(w)$ and $j(w) \leq y$. Then $Y$ is a $G$-poset and
  the following diagram is a pushout in $G\Pos$, where $k$ is the inclusion of
  the summand $X$ and $g$ is the sum of $f$ on $K$ and the identity on
  $L\setminus K$.
  \begin{equation}\label{push2} 
   \xymatrix{
    K \ar[d]_{j} \ar[r]^-{f} & X \ar[d]^{k} \\
    L \ar[r]_{g} & Y \\} 
  \end{equation}
  Moreover, if $j$ is a Dwyer map with factorization
  $\xymatrix@1{K\ar[r]^-{\io} & S\ar[r]^-{\nu} & L}$ and retraction $r\colon
  S\rtarr K$, then for $x\in X$ and $y\in L\setminus K$, $x \leq y$ if and only if $y = \nu(z)$ for some $z\in S$ such that $x \leq (f\com r)(z)$.
\end{lem}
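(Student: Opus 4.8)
The plan is to verify the pushout property directly, by checking that $Y$ with the stated order is a $G$-poset and that \eqref{push2} satisfies the universal property in $G\Pos$; the refinement for Dwyer maps is then a matter of unwinding the adjunction. First I would confirm reflexivity and transitivity of the relation on $Y=(L\setminus K)\amalg X$: reflexivity is clear since it restricts to the given orders on $X$ and on $L\setminus K$. For transitivity the only nontrivial case is a chain $x\leq y\leq y'$ with $x\in X$ and $y,y'\in L\setminus K$ (the case $x\leq x'\leq y$ being handled by applying $f$), and here we pick $w\in K$ with $x\leq f(w)$ and $j(w)\leq y\leq y'$, so the same $w$ witnesses $x\leq y'$; antisymmetry holds because no element of $X$ can dominate an element of $L\setminus K$ and vice versa, the latter because $j$ is a sieve so $j(w)\leq y\in L\setminus K$ would force $y\in K$. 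The $G$-action is the evident one on the disjoint union, and all the defining relations are $G$-equivariant since $j$, $f$, and the orders are, so $Y$ is a $G$-poset and $k,g$ are order-preserving $G$-maps; commutativity of the square is immediate from the definition of $g$.

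Next I would check the universal property. Suppose $h\colon X\rtarr Z$ and $\ell\colon L\rtarr Z$ are $G$-poset maps with $h\com f=\ell\com j$. There is at most one $G$-map $\phi\colon Y\rtarr Z$ with $\phi k=h$ and $\phi g=\ell$, namely $\phi|_X=h$ and $\phi|_{L\setminus K}=\ell|_{L\setminus K}$; the content is that this $\phi$ is order-preserving. Again the only case needing attention is $x\leq y$ with $x\in X$, $y\in L\setminus K$: choosing $w\in K$ with $x\leq f(w)$ and $j(w)\leq y$, we get $\phi(x)=h(x)\leq h(f(w))=\ell(j(w))\leq \ell(y)=\phi(y)$ in $Z$. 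Equivariance of $\phi$ is inherited from $h$ and $\ell$. This establishes that \eqref{push2} is a pushout in $G\Pos$.

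For the final assertion, assume $j$ is a Dwyer map with factorization $K\xrightarrow{\io}S\xrightarrow{\nu}L$ and retraction $r\colon S\rtarr K$ right adjoint to $\io$ with identity unit. If $y=\nu(z)$ for some $z\in S$ with $x\leq (f\com r)(z)$, set $w=r(z)\in K$; then $x\leq f(w)$, and $j(w)=\nu(\io(r(z)))\leq \nu(z)=y$ using the counit $\io\com r\rtarr \id$ of the adjunction together with the fact that $\nu$ is order-preserving, so $x\leq y$ by definition. Conversely, if $x\leq y$ pick $w\in K$ with $x\leq f(w)$ and $j(w)=\nu(\io(w))\leq y$; since $\nu$ is the inclusion of a cosieve and $y\in L$, the relation $\nu(\io(w))\leq y$ forces $y=\nu(z)$ for some $z\in S$ with $\io(w)\leq z$ in $S$, and then $w=r(\io(w))\leq r(z)$ by functoriality of $r$, whence $x\leq f(w)\leq (f\com r)(z)$. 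I expect the main obstacle to be bookkeeping rather than conceptual: one must be careful that $\nu$ being a cosieve is exactly what lets us pull the comparison $j(w)\leq y$ back into $S$, and that the unit-is-identity hypothesis is what makes $r\com\io=\id_K$ so that the two descriptions genuinely agree. Everything else is a routine transitivity check, and equivariance is automatic throughout since all structure maps are $G$-maps.
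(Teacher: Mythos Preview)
Your argument follows essentially the same route as the paper's, and the Dwyer-map paragraph matches the paper's verification almost verbatim. One sentence is garbled, though: you justify antisymmetry by saying ``$j$ is a sieve so $j(w)\leq y\in L\setminus K$ would force $y\in K$.'' That is the \emph{cosieve} direction, not the sieve direction, and in any case antisymmetry needs no such argument, since by construction the only cross relations in $Y$ go from $X$ to $L\setminus K$. The sieve hypothesis is actually used to verify that $g$ is order-preserving (a step you assert without comment): if $a\in L\setminus K$, $b\in K$, and $a\leq b$ in $L$, then the sieve property forces $a\in K$, so this problematic case does not occur and the remaining cases are routine. With that correction your proof is complete and coincides with the paper's.
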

\begin{proof}
  First, note that $Y$ is well-defined, since $L\setminus K$ is a $G$-subposet
  of $L$.  Indeed, if $y\in L\setminus K$ and $gy\in K$ then $y = g^{-1}g y \in
  K$, a contradiction.  The relation $\leq$ on $Y$ is reflexive and anti-symmetric
  since $L$ and $X$ are posets.  Transitivity requires a straightforward
  verification in the two non-trivial cases when $x \leq y$ and $y\leq z$ with either
  $x,y\in X$ and $z\in L\setminus K$ or $x\in X$ and $y,z\in L$. Thus $Y$ is a
  poset.

Clearly the map $k$ is order-preserving. Using that $j$ is a sieve, we see that $g$ is order-preserving 
by the definition of the order on $Y$. The square (\ref{push2}) is clearly a pushout of sets. Thus to show that
it is a pushout of posets it suffices 
to show that for any commutative square
\[ \xymatrix{
K \ar[d]_{j} \ar[r]^-{f} & X \ar[d]^{\ell} \\
L \ar[r]_{h} & Z \\} \]
of posets, the induced map $Y\rtarr Z$ is order-preserving. The only case that is non-trivial to check 
is when $x\leq y$ with $x\in X$ and $y\in L\setminus K$.  We must show that $\ell(x)\leq h(y)$. By assumption, 
there is an element $w\in K$ such that $x\leq f(w)$ and $j(w)\leq y$. It follows that
\[
\ell (x) \leq (\ell\com f)(w) = (h\com j)(w) \leq h(y  ),\]
as desired.  

For the last statement of the lemma, if $y=\nu(z)$ where $z\in S$ and $x\leq (f\com r)(z)$, let $w = r(z)$. Then
$x\leq f(w)$ and $j(w) = (\nu\com\io\com r)(z) \leq \nu(z) =y$ by the counit of the adjunction $(\io,r)$.  Conversely,
let $j(w)\leq y$ and $x\leq f(w)$. Since $\nu$ is a cosieve, $j(w) = (\nu\com\io)(w) \leq y$ implies $y = \nu(z)$ for some 
$z\in S$ with $\io(w) \leq z$, and then $w = (r\com \io)(w) \leq r(z)$ so that $x\leq f(w)$ implies $x \leq (f\com r\com \io)(w) \leq (f\com r)(z)$.
\end{proof}

Using this description we can show that pushouts along Dwyer $G$-maps are preserved
when taking $H$-fixed points for any subgroup $H$ of $G$.  The statement about
fixed points is a modification of \cite[Proposition 2.4]{manyauthors}.

\begin{lem}\mylabel{keytoomore} Let $j\colon K\rtarr L$ be a Dwyer $G$-map of $G$-posets, such as 
a retract of a relative $G\PI\Sd^2\aI$-cell complex, and let $f\colon K\rtarr X$
be any map of $G$-posets. Form the pushout diagram
\[ \xymatrix{
K \ar[d]_{j} \ar[r]^-{f} & X \ar[d] \\
L \ar[r] & Y \\} \]
in $G\Cat$. Then $Y$ is a $G$-poset and the diagram remains a pushout after taking $H$-fixed points 
for any subgroup $H$ of $G$.
\end{lem}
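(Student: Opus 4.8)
The plan is to reduce everything to the explicit description of pushouts of $G$-posets along sieves in \myref{pushpos}, combined with the observation that $H$-fixed points interact very simply with that description because the cosieve inclusion appearing in a Dwyer factorization is injective.

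First I would check that $Y$ really is a $G$-poset. Since colimits in the functor category $G\Cat$ are computed on underlying categories (the indexing category $BG$ has a single object), the underlying category of the $G\Cat$-pushout $Y$ is the pushout in $\Cat$ of the underlying diagram; and $j$ underlies a Dwyer map by \myref{fixDwyer} applied to the trivial subgroup, so \myref{cruxtoo}(iv) shows this pushout is a poset. As $G\Pos$ is a full subcategory of $G\Cat$, the square is then also a pushout in $G\Pos$, and \myref{pushpos} gives $Y = (L\setminus K)\amalg X$ with the order described there.

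Next, fix a subgroup $H$ and choose an equivariant Dwyer factorization $K\xrightarrow{\io} S\xrightarrow{\nu} L$ of $j$ with equivariant retraction $r\colon S\rtarr K$. I would record that $K^H\xrightarrow{\io^H} S^H\xrightarrow{\nu^H} L^H$ is again a Dwyer factorization of $j^H$ (this is essentially \myref{fixDwyer}: $\nu^H$ is a cosieve in $L^H$, the map $r$ restricts to a map $S^H\rtarr K^H$, and the unit remains the identity), with retraction $r^H = r|_{S^H}$, so that \myref{pushpos} also describes the pushout $P$ of $j^H$ and $f^H$ in $\Pos$. On underlying sets the identification $Y^H\cong P$ is then immediate: $K$ is $G$-stable, so $(L\setminus K)^H = L^H\setminus K^H$, and fixed points commute with coproducts, whence $Y^H = (L^H\setminus K^H)\amalg X^H$, which is the underlying set of $P$; moreover the structure maps from $L^H$ and from $X^H$ agree on the nose.

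The only substantive point, which I expect to be the crux, is comparing the two partial orders, and there the only relations to examine are those of the form $x\leq y$ with $x\in X^H$ and $y\in L^H\setminus K^H$; all others restrict obviously from the orders of $L$ and $X$, and there are no relations $y\leq x$ in either poset. Here I would invoke the last sentence of \myref{pushpos}: $x\leq y$ in $Y$ if and only if $y=\nu(z)$ for some $z\in S$ with $x\leq(f\com r)(z)$. Because $\nu$ is injective such a $z$ is unique; and if $y$ is $H$-fixed, then equivariance of $\nu$ forces $z\in S^H$, whence $r(z)\in K^H$, the inequality $x\leq(f\com r)(z)$ may equally be read in $X^H$, and $(f\com r)(z) = (f^H\com r^H)(z)$. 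Feeding this into the same criterion for $j^H$ and $f^H$ shows that $x\leq y$ in $Y^H$ exactly when $x\leq y$ in $P$, so $Y^H = P$ as posets and the fixed-point square is a pushout. The remaining work is pure bookkeeping — verifying that the chosen equivariant factorization restricts correctly to $H$-fixed points and that all the maps in sight are compatible — and the injectivity of $\nu$ is exactly what makes the fixed-point comparison go through with no extra hypotheses.
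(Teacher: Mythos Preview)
Your proposal is correct and follows the same strategy as the paper's proof: use \myref{cruxtoo}(iv) to see that $Y$ is a poset, invoke \myref{fixDwyer} to know $j^H$ is a Dwyer map, and then compare $Y^H$ with the pushout $L^H\cup_{K^H}X^H$ via the explicit description in \myref{pushpos}. The paper's proof is extremely terse at the last step, simply asserting that ``the description from \myref{pushpos} can be used''; your argument makes explicit the one nontrivial point hidden there, namely that injectivity of the cosieve inclusion $\nu$ forces the witness $z\in S$ for a relation $x\leq y$ (with $y$ fixed) to itself be $H$-fixed, so that the order relations in $Y^H$ and in $L^H\cup_{K^H}X^H$ coincide.
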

\begin{proof} Ignoring the $G$-action, the left vertical arrow is a Dwyer map of
  posets. Therefore $Y$ is a poset by \myref{cruxtoo}(iv) and is thus a
  $G$-poset. Fix a subgroup $H$ of $G$; by \myref{fixDwyer} $j^H$ is a Dwyer map, and thus
  the description from \myref{pushpos} can be used for $X^H\cup_{K^H} L^H$.
\end{proof}

\begin{exmp}
Let $G$ be the cyclic group of order two. Let $L$ be the three object $G$-poset depicted by 
$0\rtarr 2 \ltarr 1$ equipped with the action that interchanges $0$ and $1$, but fixes $2$.
Let $K$ be the $G$-subposet that consists of the elements $0$ and $1$. Then the inclusion $K\rtarr L$ is 
a sieve but {\em not} a Dwyer $G$-map. If $X=\ast$ is the terminal $G$-poset and $K\rtarr X$ is the unique map, 
then the pushout  $L\cup_K X$ in $G\Pos$ is the 
$G$-poset depicted by $*\rtarr 2$, with trivial $G$-action. Thus its $G$-fixed point poset is also
$*\rtarr 2$. However, the pushout $L^G\amalg_{K^G} X^G$ is the discrete poset with two elements 
$*$ and $2$.
\end{exmp}

\section{The proof of \myref{Main}}
For our equivariant model structures, we start with the following general 
result, which puts together results of the second author \cite[Proposition 2.6, Theorem 2.10]{stephan} with
augmentations of those results due to Bohmann, et al \cite[Propositions 1.4, 1.5, and 1.6]{manyauthors}, 
all reformulated in our simpler compactly generated setting.  Recall that $\aO_G$ 
denotes the orbit category of $G$.

\begin{defn} For a category $\aC$, let $G\aC$ denote the category of $G$-objects in $\aC$ and
let $\aO_G$-$\aC$ denote the category of contravariant functors $\aO_G\rtarr \aC$.  Assuming 
that $\aC$ has coproducts, define a functor
\[ \otimes\colon G\Set\times \aC \rtarr G\aC \]
by $S\otimes X = \amalg_S X$, the coproduct of copies of $X$ indexed by elements of $S$, 
with $G$-action induced from the action of $G$ on $S$ by permutation of the copies of $X$.
\end{defn}

We have an adjunction $(\LA,\PH)$ between $G\aC$ and $\aO_G$-$\aC$.  The left adjoint
$\LA$ sends a functor $\aO_G\rtarr \aC$ to its value on $G/e$ and the right adjoint $\PH$
sends a $G$-object to its fixed point functor.

\begin{thm}\mylabel{omni} Let $\aC$ be a compactly generated model category.  Assume that for each subgroup
$H$ of $G$, the $H$-fixed point functor $(-)^H\colon G\aC \rtarr \aC$ satisfies the
following properties. 
\begin{enumerate}[(i)]
\item It preserves colimits of sequences of maps $i_n\colon X_n\rtarr X_{n+1}$ in $G\aC$,
where each $i_n$ is a cofibration in $\aC$. 
\item It preserves coproducts.
\item It preserves pushouts of diagrams in which one leg is given by a coproduct of maps of the form
\[  \id\otimes j\colon G/J\otimes X \rtarr G/J\otimes Y, \]
where $j$ is a generating cofibration (or generating acyclic cofibration) of $\aC$ and J is a subgroup of $G$.\footnote{We don't
need to assume the condition for acyclic cofibrations, but we do so for convenience.}
\item For any object $X$ of $\aC$, the natural map 
\[ (G/J)^H\otimes X \rtarr (G/J\otimes X)^H \]
is an isomorphism in $\aC$.
\end{enumerate}
Then $G\aC$ admits a compactly generated model structure, where a map $f$ in $G\aC$ is a fibration or weak equivalence
if each fixed point map $f^H$ is a fibration or weak equivalence, so that $\PH(f)$ is a fibration
or weak equivalence in $\aO_G$-$\aC$. The generating (acyclic) cofibrations are the $G$-maps 
$\id\otimes j\colon G/J\otimes K \rtarr G/J\otimes L$, where the maps $j\colon K\rtarr L$ are the generating
(acyclic) cofibrations of $\aC$. Moreover, $(\LA,\PH)$ is then a Quillen equivalence between
$G\aC$ and $\aO_G$-$\aC$. Further, if $\aC$ is left or right proper, then so is $G\aC$.
\end{thm}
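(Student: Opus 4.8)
The plan is to obtain the model structure on $G\aC$ by transferring the levelwise (projective) model structure across the adjunction $(\LA,\PH)$ by way of Kan's transport theorem \myref{Kan}. Recall first that $\aO_G$-$\aC$ carries the levelwise model structure, which is compactly generated: its generating cofibrations are the maps $F_{G/H}(j)$, where $F_{G/H}\colon\aC\rtarr\aO_G\text{-}\aC$ is left adjoint to evaluation at $G/H$ (so $F_{G/H}(K)(G/L)=\amalg_{\aO_G(G/L,G/H)}K$), $H$ ranges over the subgroups of $G$, and $j$ over the generating cofibrations of $\aC$, and similarly for the generating acyclic cofibrations; its fibrations and weak equivalences are the levelwise ones, so that $\PH(f)$ is a fibration or weak equivalence precisely when every $f^H$ is. Since $\LA$ is evaluation at $G/e$ and $\aO_G(G/e,G/H)\cong G/H$ as a $G$-set, one has $\LA F_{G/H}(K)\cong G/H\otimes K$; hence $\LA$ sends the generating (acyclic) cofibrations of $\aO_G$-$\aC$ exactly to the maps $\id\otimes j\colon G/J\otimes K\rtarr G/J\otimes L$ named in the theorem. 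It therefore remains to verify the two hypotheses of \myref{Kan} for the pair $(\LA,\PH)$, after which that theorem delivers the model structure, the description of its generating sets and of its weak equivalences and fibrations, and the assertion that $(\LA,\PH)$ is a Quillen pair; the Quillen equivalence and properness will require separate arguments sketched below.

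For condition (i) of \myref{Kan}, the domains and codomains of the maps $G/J\otimes j$ are objects of the form $G/J\otimes K$ with $K$ a compact object of $\aC$, and their compactness in $G\aC$ is routine from the adjunction isomorphism $\mathrm{Hom}_{G\aC}(G/J\otimes K,-)\cong\mathrm{Hom}_\aC(K,(-)^J)$, the compactness of $K$, and the fact — a consequence of hypotheses (i)--(iii) together with (iv) — that $(-)^J$ carries the colimits occurring in the small object argument to colimits of relative cell complexes in $\aC$. Condition (ii), that $\PH$ carries every relative cell complex built from the maps $G/J\otimes j$ with $j$ a generating acyclic cofibration of $\aC$ to a weak equivalence, is the heart of the matter. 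Such a complex is a sequential colimit of pushouts of coproducts of maps $G/J\otimes j$, and hypotheses (i), (ii), (iii) say precisely that each $(-)^H$ — equivalently $\PH$ at level $G/H$ — preserves these sequential colimits, coproducts, and pushouts. Hence $\PH$ of the complex is, at level $G/H$, a cell complex of the same shape built from the maps $(G/J\otimes j)^H$, and hypothesis (iv) identifies $(G/J\otimes j)^H$ with $(G/J)^H\otimes j=\amalg_{(G/J)^H}j$, a coproduct of copies of $j$ and so an acyclic cofibration of $\aC$. A relative cell complex built from acyclic cofibrations of $\aC$ is an acyclic cofibration, in particular a weak equivalence, so $\PH$ of the original complex is a levelwise, hence an $\aO_G$-$\aC$, weak equivalence, as required.

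To promote the Quillen pair to a Quillen equivalence I would invoke the standard criterion: it suffices that $\PH$ reflect weak equivalences between fibrant objects and that for every cofibrant $Y$ in $\aO_G$-$\aC$ the derived unit $Y\rtarr\PH(R\LA Y)$ be a weak equivalence, where $R$ denotes fibrant replacement in $G\aC$. The first holds because $\PH$ reflects all weak equivalences by the very definition of the model structure; and since $\PH$ of the fibrant replacement map $\LA Y\rtarr R\LA Y$ is then automatically a weak equivalence, it is enough to show that the ordinary unit $\eta_Y\colon Y\rtarr\PH\LA Y$ is an isomorphism for cofibrant $Y$. On a generating cell $F_{G/H}(K)$ this is immediate, for at level $G/L$ the source is $\amalg_{\aO_G(G/L,G/H)}K\cong(G/H)^L\otimes K$ while the target is $(G/H\otimes K)^L\cong(G/H)^L\otimes K$ by hypothesis (iv), compatibly with the structure maps. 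Since $\LA$ preserves all colimits and $\PH$ preserves, levelwise, exactly the colimits that assemble a relative cell complex built from the $F_{G/H}(j)$ — once more by hypotheses (i), (ii), (iii) — the unit is a natural isomorphism on every such complex, hence on every cofibrant object, those being retracts of such complexes.

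Finally, for properness: if $\aC$ is right proper, then $(-)^H$, being a limit, preserves pullbacks, so, since weak equivalences and fibrations in $G\aC$ are detected by the functors $(-)^H$, any pullback in $G\aC$ of a weak equivalence along a fibration is carried by each $(-)^H$ to such a pullback in $\aC$ and hence to a weak equivalence; thus $G\aC$ is right proper (equivalently, invoke the remark following \myref{Kan} and the levelwise right properness of $\aO_G$-$\aC$). If $\aC$ is left proper, observe that every cofibration of $G\aC$ is a retract, with fixed source, of a relative cell complex built from the maps $G/J\otimes j$; consequently a pushout in $G\aC$ of a weak equivalence along a cofibration is a retract of a pushout of that weak equivalence along such a relative cell complex, and $(-)^H$ — preserving, by (i), (ii), (iii), the sequential colimits, coproducts, and generating-cell pushouts that constitute the latter, and rewriting the cells by (iv) — carries it to a pushout in $\aC$ of a weak equivalence along a cofibration, which is a weak equivalence by left properness of $\aC$; stability of weak equivalences under retracts then shows $G\aC$ is left proper. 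The recurring difficulty throughout — and the one thing really demanding care — is the propagation of hypotheses (i)--(iv), which are stated only for sequential colimits, coproducts, single generating-cell pushouts, and single orbits, through arbitrary relative cell complexes and their retracts; the hypotheses are arranged so that each such cellular step matches one of them, with (iv) furnishing the one genuinely substantive computation, namely that fixed points convert $G/J\otimes j$ into a coproduct of copies of $j$.
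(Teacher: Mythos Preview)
The paper does not supply its own proof of this theorem. Immediately before the statement it says the result ``puts together results of the second author \cite[Proposition 2.6, Theorem 2.10]{stephan} with augmentations of those results due to Bohmann, et al \cite[Propositions 1.4, 1.5, and 1.6]{manyauthors}, all reformulated in our simpler compactly generated setting,'' and then states the theorem without further argument. So there is no in-paper proof to compare against.

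That said, your sketch is correct and is essentially the argument one would extract from the cited sources, run through the adjunction $(\LA,\PH)$ and \myref{Kan}. The paper's footnote after \myref{Main} even flags the minor difference: the cited references transfer the model structure from copies of $\aC$ via the individual fixed-point functors, whereas you (and the paper, in intent) transfer from $\aO_G$-$\aC$; the two routes produce the same structure. The one place I would tighten is the left properness paragraph: hypothesis (iii) is stated only for pushouts along a \emph{single} coproduct of generating maps, so to conclude that $(-)^H$ preserves the pushout of a weak equivalence along an arbitrary relative cell complex you should explicitly decompose that pushout square as a sequential tower of pushout squares (one per cell attachment), apply (iii) stage by stage, and invoke (i) for the sequential colimit. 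You assert this conclusion but elide the decomposition; once made explicit, the argument is complete.
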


By \cite[1.3]{manyauthors}, the model structure is functorial with respect to Quillen pairs.

\begin{thm} Let $\aC$ and $\aD$ be compactly generated model categories satisfying the assumptions of
\myref{omni} and let $(L,R)$ be a Quillen pair between them.  Then there is an induced Quillen
pair between $G\aC$ and $G\aD$, and it is a Quillen equivalence if $(L,R)$ is a 
Quillen equivalence.
\end{thm}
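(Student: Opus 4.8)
The plan is to construct the induced adjunction by postcomposition, to verify it is a Quillen pair using the fixed-point characterization of fibrations in \myref{omni}, and then to deduce the Quillen-equivalence statement by transporting the nonequivariant hypothesis up through the fixed-point diagram adjunction $(\LA,\PH)$.

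First I would identify $G\aC$ with the category of functors from the one-object groupoid $BG$ (with automorphism group $G$) to $\aC$, and similarly for $\aD$; both carry the model structures provided by \myref{omni}, since $\aC$ and $\aD$ satisfy its hypotheses. Postcomposition with $L$ and with $R$ then yields functors $L_\ast\colon G\aC\rtarr G\aD$ and $R_\ast\colon G\aD\rtarr G\aC$, and postcomposing the unit and counit of $(L,R)$ objectwise exhibits $(L_\ast,R_\ast)$ as an adjunction; this is the induced pair. To see it is a Quillen pair I would check that $R_\ast$ preserves fibrations and acyclic fibrations. The point is that, because $R$ is a right adjoint and the $H$-fixed object $Y^H$ of a $G$-object $Y$ is the limit of $Y$ restricted along $BH\hookrightarrow BG$, there is a natural isomorphism $(R_\ast Y)^H\cong R(Y^H)$; equivalently, $\PH\com R_\ast$ coincides with the composite of $\PH$ with the objectwise functor $R^{\aO}$ on $\aO_G\text{-}\aD$. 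By \myref{omni}, the fibrations and acyclic fibrations in $G\aC$ and in $G\aD$ are precisely the maps whose $H$-fixed point maps are fibrations, respectively acyclic fibrations, for every subgroup $H$ of $G$; since $R$ preserves these (as $(L,R)$ is a Quillen pair), so does $R_\ast$. Passing to left adjoints records the identity $L_\ast\com\LA_\aC = \LA_\aD\com L^{\aO}$, where $L^{\aO}$ denotes objectwise $L$ between the diagram categories; this identity is used below.

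For the Quillen-equivalence claim I would assume $(L,R)$ is a Quillen equivalence. Objectwise application of $L$ and $R$ gives a Quillen pair $(L^{\aO},R^{\aO})$ between the projective model categories $\aO_G\text{-}\aC$ and $\aO_G\text{-}\aD$, since fibrations and weak equivalences there are detected objectwise and $R$ preserves fibrations and acyclic fibrations. This pair is in fact a Quillen equivalence: projectively cofibrant diagrams are objectwise cofibrant, projectively fibrant diagrams are objectwise fibrant, and weak equivalences are objectwise, so the defining weak-equivalence criterion for the Quillen equivalence $(L,R)$ can be checked one object $G/H$ at a time. By \myref{omni}, the adjunctions $(\LA,\PH)$ for $\aC$ and for $\aD$ are Quillen equivalences, and the identity from the previous paragraph says that the square of left adjoints commutes: $L_\ast\com\LA_\aC = \LA_\aD\com L^{\aO}$. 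The composite $\LA_\aD\com L^{\aO}$ is therefore a Quillen equivalence, being a composite of two of them; since $\LA_\aC$ is a Quillen equivalence, the two-out-of-three property for Quillen equivalences forces $L_\ast\colon G\aC\rtarr G\aD$ to be a Quillen equivalence as well.

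I expect the only step with genuine content to be the interchange $(R_\ast Y)^H\cong R(Y^H)$ — that forming $H$-fixed points commutes with applying a right adjoint — which is exactly what allows the nonequivariant hypothesis to be transported through $\PH$. Everything else is formal bookkeeping with the two-out-of-three property and the standard behaviour of projective model structures on diagram categories. The subtlety to keep in mind is that one cannot argue symmetrically with $L$ in place of $R$, since a left adjoint need not commute with fixed points; this is precisely why the reduction of the equivariant statement to the nonequivariant one is routed through $(\LA,\PH)$ rather than attempted directly between $G\aC$ and $G\aD$.
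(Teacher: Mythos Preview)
The paper does not actually prove this theorem: it simply records the statement with the attribution ``By \cite[1.3]{manyauthors}, the model structure is functorial with respect to Quillen pairs,'' and gives no further argument. So there is no proof in the paper to compare against.

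Your argument is correct and supplies exactly the details the paper delegates to \cite{manyauthors}. The two essential points are the ones you isolate: the natural isomorphism $(R_\ast Y)^H\cong R(Y^H)$, which holds because $R$ is a right adjoint and $H$-fixed points are limits, and the commuting square of left adjoints $L_\ast\com\LA_{\aC}=\LA_{\aD}\com L^{\aO}$, which is immediate since $\LA$ is evaluation at $G/e$. The passage from the nonequivariant Quillen equivalence to the levelwise one $(L^{\aO},R^{\aO})$ on $\aO_G$-diagrams is standard and your justification (projective cofibrants are objectwise cofibrant, everything else is objectwise by definition) is the right one; the two-out-of-three property for Quillen equivalences among left Quillen functors then finishes the argument as you describe.
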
 

\begin{proof}[Proof of \myref{Main}]
We need only verify conditions (i)-(iv) of
  \myref{omni} when $\aC = \Pos$.  Cofibrations in $\Pos$ are inclusions and if
  $x\in X = \colim X_n$, then $x\in X^H$ if and only if $x\in X_n^H$ for a large
  enough $n$; thus condition (i) holds. Condition (ii) holds by the definition
  of coproducts in $\Cat$.  Since the action of $G$ on $G/J\otimes X$ comes from
  the action of $G$ on $G/J$, condition (iv) holds as well.

  It remains to check condition (iii).  By \myref{thomcrux}, the generating (acyclic)
  cofibrations in $\Pos$ are Dwyer maps.  Consider a pushout
  diagram in $G\Cat$
  \[ \xymatrix{
    \coprod_{i\in I} G/J_i\otimes K_i \ar[d]_{\amalg \id\otimes j_i}
    \ar[r]^-{\coprod f_i} & X \ar[d] \\
    \coprod_{i\in I} G/J_i\otimes L_i \ar[r] & Y \\} \]
  where each $j_i:K_i \rtarr L_i$ is a Dwyer map and $f_i:G/J_i \otimes
  K_i \rtarr X$ is a map of $G$-posets.  Condition (iii) holds if, for any such diagram, $Y$ is a $G$-poset
  (hence $Y^H$ is also a poset) and the diagram remains a pushout after passage to $H$-fixed points.
   This is a special case of
 \myref{keytoomore}.  
\end{proof}

\section{Cofibrant posets}\label{Cof}

Since every cofibrant object in $\Cat$ is a poset and, by \myref{cofibrations}, a poset is cofibrant in $\Pos$ if and only if it is cofibrant in $\Cat$, 
it follows that $\Pos$ and $\Cat$ have the same cofibrant objects.  We have an explicit cofibrant replacement functor for $\Pos$, namely double 
subdivision.  While this does give a large class of cofibrant objects, it does not help to determine whether or not a given poset is cofibrant.
By \myref{retract}, any cofibration in $\Pos$ is a Dwyer map and it follows immediately from the definition of Dwyer maps 
that the map $\emptyset \rtarr P$ is a Dwyer map for any poset $P$. Our understanding is summarized in the following picture:
\begin{center}
  \medskip
  \begin{tikzpicture}[font=\scriptsize]
    \draw (0,0.4) rectangle (6,4); \node[below] at (3,4) {Dwyer maps}; \draw
    (2,2.25) ellipse (1.75 and 1.1); \node[below] at (1.5,2.6) {cofibrations};
    \draw (4,1.75) ellipse (1.75 and 1.1); \node[above] at (4.6,1.2)
    {\begin{tabular}{c}morphisms \\ $\emptyset \rtarr P$\end{tabular}}; \node at
    (3,2) {\begin{tabular}{c}cofibs. \\$\emptyset \hookrightarrow
        P$\end{tabular}};
  \end{tikzpicture}
  \medskip
\end{center}
It is not difficult to show that most of the sections in this Venn diagram are
nonempty; the only difficulty is to show that there exist morphisms
$\emptyset \rtarr P$ which are not cofibrations. As the referee pointed out to us, 
it is not hard to find infinite posets that are not cofibrant, such as the natural numbers 
with its reverse ordering.  However, as far as we know ours is the first example of a finite
poset that is not cofibrant. Specifically, in Proposition~\ref{2sphere} we show that the following model of the $2$-sphere, 
which is a finite poset $A$ whose classifying space is homeomorphic to $S^2$,
is not cofibrant in $\Pos$.
\[\xymatrix{ c_1 & c_2 \\ b_1 \ar[ru] \ar[u] & b_2 \ar[lu] \ar[u] \\ a_1 \ar[ru]
  \ar[u] & a_2 \ar[lu] \ar[u]}\]
  
  This example of a finite, non-cofibrant poset is minimal in dimension and in cardinality. We prove in Proposition~\ref{onedimensionalcofibrant} that every one-dimensional finite poset is cofibrant, and 
Bruckner and Pegel \cite{BP}  have shown that every poset with at most five elements is cofibrant.

We first give a tool for showing that posets are not cofibrant.
\begin{lem}\mylabel{retractcondition}
  Let $A$ be a nonempty finite poset. Suppose that $A$ satisfies the following
  condition: for any pushout square
  \[\xymatrix{\Pi \Sd^2\partial \Delta[n] \ar[r] \ar[d] & X \ar[d] \\ 
    \Pi \Sd^2 \Delta[n] \ar[r] & Y}\]
  if $A$ is a retract of $Y$ then it is also a retract of $X$.  Then $A$ is not
  cofibrant in $\Pos$.
\end{lem}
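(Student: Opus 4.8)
The plan is to argue by contradiction using the small object argument. Suppose $A$ is cofibrant, so that $\emptyset \rtarr A$ is a cofibration in $\Pos$, hence (by \myref{retract}) a retract of a relative $P\PI\Sd^2\aI$-complex $\emptyset \rtarr Z$. Since $A$ is finite, it is compact, so the retraction $A \rtarr Z$ factors through a finite stage $Z_m$ of the cellular filtration $\emptyset = Z_0 \rtarr Z_1 \rtarr \cdots \rtarr Z = \colim Z_n$; thus $A$ is a retract of $Z_m$. (Here each $Z_{n+1}$ is obtained from $Z_n$ by a pushout as in \eqref{push} with $j$ a coproduct of maps in $\PI\Sd^2\aI$, which by \myref{relcomp} we may take to be a single map $\PI\Sd^2\pa\DE[n]\rtarr \PI\Sd^2\DE[n]$ after noting that a coproduct of such pushouts can be performed one cell at a time.) The idea is then to run down the filtration: by the hypothesized condition applied to the pushout square producing $Z_m$ from $Z_{m-1}$, if $A$ is a retract of $Z_m$ then $A$ is a retract of $Z_{m-1}$; iterating, $A$ is a retract of $Z_0 = \emptyset$, forcing $A = \emptyset$, contradicting nonemptiness.

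First I would make precise the reduction to single-cell attachments. A coproduct $\coprod_i \PI\Sd^2\pa\DE[n_i]\rtarr \coprod_i \PI\Sd^2\DE[n_i]$ pushed out along a map to $Z_n$ can be factored as a finite composite of pushouts along the individual generators $\PI\Sd^2\pa\DE[n_i]\rtarr \PI\Sd^2\DE[n_i]$; so after refining the filtration we may assume each $Z_{n+1}$ is a pushout of a single generating cofibration along a map $\PI\Sd^2\pa\DE[n]\rtarr Z_n$, exactly the shape of square appearing in \myref{retractcondition}. (One should check this refinement stays within $\Pos$, which it does by \myref{cruxtoo}(iv) since each generator is a Dwyer map by \myref{thomcrux}.)

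Next I would carry out the descent step, which is just an unwinding of the hypothesis: it says precisely that "$A$ a retract of the target of a single-cell pushout" implies "$A$ a retract of the source." Combined with the fact that a retract of a retract is a retract, a downward induction on $n$ from $m$ to $0$ gives that $A$ is a retract of $Z_0$. Since $Z_0 = \emptyset$ and the only retract of $\emptyset$ in $\Pos$ is $\emptyset$ itself, we conclude $A = \emptyset$, contradicting the assumption that $A$ is nonempty. Hence $A$ is not cofibrant.

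The main obstacle is the compactness/finite-stage argument at the start: one must be sure that a map out of the finite poset $A$ into a sequential colimit $Z = \colim Z_n$ (with each $Z_n \rtarr Z_{n+1}$ an inclusion of posets, by \myref{cruxtoo} and \myref{relcomp}) factors through some $Z_m$, and that the section $A\rtarr Z_m$ together with the restricted retraction $Z_m \rtarr A$ still exhibits $A$ as a retract. This is where "compactly generated" does the work: the domains of the generators have finitely many $0$-simplices, so $A$ is compact relative to the cellular filtration, and the factorization exists; the retraction identity $Z_m \rtarr A \rtarr Z_m \rtarr Z$ agreeing with $Z_m \rtarr Z$ then forces $Z_m \rtarr A \rtarr Z_m$ to be a retraction after possibly enlarging $m$. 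The remaining steps — the single-cell refinement and the downward induction — are routine once this is in place.
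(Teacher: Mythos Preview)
Your approach is the paper's: assume $A$ cofibrant, realize it as a retract of a finite stage of a cell complex built from $\emptyset$, then use the hypothesis to descend one cell at a time until $A$ is a retract of $\emptyset$. Two small execution points: first, the coproduct of generators attached at a given stage may be infinite, so ``factored as a finite composite of pushouts along the individual generators'' is not literally true---you must invoke finiteness of $A$ once more to factor the section through the sub-pushout along only finitely many of those cells (the paper does exactly this); second, your closing paragraph overcomplicates the retract check: once the section $A\to Z$ factors as $A\to Z_m\to Z$, composing with the restriction of the retraction $Z\to A$ to $Z_m$ already gives $A\to Z_m\to A=\id_A$, with no need to enlarge $m$ or to examine $Z_m\to A\to Z_m$.
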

\begin{proof}  Assume that  $A$ is cofibrant. We prove that $A$ must be
empty, a contradiction.  Since $\Pos$ is compactly generated and $A$ is cofibrant, $A$ is a
  retract of a sequential colimit $\colim_n X_n$, where $X_0=\emptyset$ and
  $X_i\rtarr X_{i+1}$ is a pushout of a coproduct of generating cofibrations for
  $i\geq 0$. Since $A$ is finite, the inclusion $A\rtarr \colim_n X_n$
  factors through some $X_n$, and then $A$ is a retract of $X_n$. Assume $n>0$.  Since $A$ is finite, the inclusion $A\rtarr X_n$
  factors through a pushout $Y_n$ obtained by attaching only finitely many
  generating cofibrations to $X_{n-1}$, and then $A$ is a retract of $Y_n$. We can now use the assumed condition on
  $A$ to induct downwards one generating cofibration at a time; our condition
  ensures that $A$ is a retract of $X_{n-1}$.  Iterating, we deduce that $A$ is
  a retract of $X_0=\emptyset$ and thus $A=\emptyset$.
\end{proof}

We will also need the following explicit description of the generating cofibrations
\[\Pi \Sd^2\pa \DE[n] \rtarr \Pi\Sd^2\DE[n].\]
An element of the poset $\PI\Sd^2 \DE[n]$ is a sequence of strict inclusions
\[S_0\subset \ldots \subset S_k \] of nonempty subsets of
$\n=\{0,\ldots,n\}$. We can identify such a sequence with the totally ordered set $\{S_0,\ldots,S_k\}$. With this
identification the order relation on $\PI\Sd^2 \DE[n]$ is given by subset
inclusion.  The poset $\PI\Sd^2 \pa\DE[n]$ is the subposet of $\PI\Sd^2
\DE[n]$ given by the sequences $S_0\subset \ldots \subset S_k$ with $S_k\neq
\n$.

We are now ready to show that our model $A$ of the $2$-sphere is not cofibrant.
\begin{prop} \mylabel{2sphere}
The finite poset $A$ is not cofibrant in $\Pos$.
\end{prop}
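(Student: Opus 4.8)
The plan is to verify the hypothesis of \myref{retractcondition} for $A$. Fix $n\geq 0$, a map $f\colon\PI\Sd^2\pa\DE[n]\to X$ of posets, and the pushout $Y$ in $\Pos$ of the generating cofibration $j\colon\PI\Sd^2\pa\DE[n]\to\PI\Sd^2\DE[n]$ along $f$; assume we are given $s\colon A\to Y$ and $r\colon Y\to A$ with $rs=\id_A$, so that $s$ is injective and reflects the order. We must exhibit a retraction of $A$ onto $X$.

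First I would make $Y$ completely explicit. By \myref{thomcrux}, $j$ is a Dwyer map, and one checks directly that it factors as $\PI\Sd^2\pa\DE[n]\hookrightarrow S\hookrightarrow\PI\Sd^2\DE[n]$, where $S$ is $\PI\Sd^2\DE[n]$ with its unique length-zero chain $\hat{\n}:=\{\n\}$ deleted, and the adjoint retraction $S\to\PI\Sd^2\pa\DE[n]$ deletes $\n$ from a chain (the unit being the identity). Plugging this into \myref{pushpos} (with trivial group) presents $Y$ as $N\amalg X$, where $N=\PI\Sd^2\DE[n]\setminus\PI\Sd^2\pa\DE[n]$ is the poset of chains of nonempty subsets of $\n$ whose top member is $\n$; here $X$ is a sieve and $N$ a cosieve in $Y$, the poset $N$ has unique least element $\hat{\n}$, nothing of $X$ lies below $\hat{\n}$ in $Y$, and for $y\in N$ with $y\neq\hat{\n}$ the set $\{x\in X:x\leq y\}$ has maximum the ``shadow'' $\pi(y):=f(y\setminus\{\n\})$. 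Note that $\pi$ is order-preserving and $\pi(y)\leq y$ in $Y$.

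Write $A_N:=s^{-1}(N)$, a cosieve of $A$, and $A_X:=A\setminus A_N$, a sieve. Since $N$ has a least element it is contractible, so $A_N\neq A$ (otherwise $|BA|\cong S^2$ would be a retract of a contractible space). The main engine is a shadowing construction, available as soon as $\hat{\n}\notin s(A)$: set $s'\colon A\to X$ equal to $s$ on $A_X$ and to $a\mapsto\pi(s(a))$ on $A_N$. Using that $A_N$ is a cosieve, that $\pi$ is monotone, and that $s(a')\leq s(a)$ with $s(a')\in X$, $s(a)\in N$ forces $s(a')\leq\pi(s(a))$, one sees that $s'$ is order-preserving; and $r|_X\circ s'=\id_A$ follows by induction up the poset $A$: for $a\in A_N$ one has $r(s'(a))\leq a$ from $\pi(s(a))\leq s(a)$, while $r(s'(a))\geq a''$ for each lower cover $a''$ of $a$ because $s'(a'')\leq s'(a)$ and, inductively, $r(s'(a''))=a''$; inspection of the Hasse diagram of $A$ shows that for every $a$ the only element of $A$ lying below $a$ and above all lower covers of $a$ is $a$ itself, whence $r(s'(a))=a$. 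Thus $s'$ is injective and $(s',r|_X)$ makes $A$ a retract of $X$. This settles every pushout square for which $\hat{\n}\notin s(A)$.

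There remains the case $\hat{\n}\in s(A)$. Since $\hat{\n}$ is minimal in $Y$ while $r(\hat{\n})$ is a lower bound of $A_N$, the point $s^{-1}(\hat{\n})$ is minimal in $A$ and equals $\min A_N$; so, up to the evident symmetry, $s(a_1)=\hat{\n}$, $A_N=\{a_1,b_1,b_2,c_1,c_2\}$ and $A_X=\{a_2\}$, with $b_1,b_2$ the points covering $a_1$, so $s(b_1),s(b_2)\in N\setminus\{\hat{\n}\}$. If the chains $s(b_1)$ and $s(b_2)$ share a member $R\neq\n$, then $\{R,\n\}\in N\setminus\{\hat{\n}\}$ lies below both $s(b_1)$ and $s(b_2)$ and has $r(\{R,\n\})=a_1$, so altering $s$ only by sending $a_1$ to $\{R,\n\}$ yields a new retraction with $\hat{\n}\notin s(A)$, reducing to the previous case. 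The remaining possibility, $s(b_1)\cap s(b_2)=\{\n\}$, is the crux and, I expect, the main obstacle: now $\hat{\n}$ is the only point of $Y$ below both $s(b_1)$ and $s(b_2)$, so shadowing fails outright. I would dispose of this by ruling the configuration out on homotopical grounds: by \myref{key} the classifying space $|BY|$ is the homotopy pushout $|BX|\cup_{S^{n-1}}D^{n}$, and I would pin down the essentially unique placement of the eight triangles of $A$ inside $|BY|$ (four inside the contractible cone $|BN|$, the other four joined to the single point $s(a_2)$ of $X$) and check, by a direct homological computation modelled on the fact that $|BY|\simeq S^{n}$ when $X$ is a point, that this is incompatible with $r_\ast s_\ast=\id$ on $H_2$, the class $s_\ast[S^2]$ being necessarily killed by $r_\ast$. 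Combining the three cases with \myref{retractcondition} gives that $A$ is not cofibrant.
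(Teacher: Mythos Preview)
Your overall strategy matches the paper's: both invoke \myref{retractcondition} and both use the ``shadow'' map $p\colon Y\setminus\{\hat{\n}\}\to X$ (your $\pi$) to transport the section $s$ into $X$.  Your inductive verification that $r\circ s'=\id_A$ when $\hat{\n}\notin s(A)$ is correct and is essentially the paper's computation of $r\,p\,i$ on $b_1,c_1,c_2$, just organised differently.

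The genuine gap is your sub-case (b), where $s(a_1)=\hat{\n}$ and $s(b_1)\cap s(b_2)=\{\n\}$.  You acknowledge this as ``the main obstacle'' and propose to rule it out homotopically, but the sketch is not a proof: you never explain why $s_*[S^2]$ should die under $r_*$, and indeed $r_*s_*=\id$ on $H_2$ by hypothesis, so a bare homological argument cannot produce a contradiction.  Any such argument would have to exploit very specific geometry of how the eight $2$-simplices of $BA$ sit in $BY$, and you have not carried this out.

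The paper avoids this entirely by a purely combinatorial trick that you almost found: instead of intersecting $s(b_1)$ and $s(b_2)$, intersect $s(c_1)$ and $s(c_2)$.  Since $b_j\leq c_k$ for all $j,k$, both $s(b_1)$ and $s(b_2)$ are subsets of $z:=s(c_1)\cap s(c_2)$, so $z\in N$ and $s(b_1),s(b_2)\leq z\leq s(c_1),s(c_2)$.  Applying $r$ yields an element of $A$ that is $\geq b_1,b_2$ and $\leq c_1,c_2$; no such element exists, so sub-case~(b) is impossible.  In fact the paper uses exactly this argument one level higher: it shows directly that $i(a_1),i(a_2)\in X$ (hence $\hat{\n}\notin i(A)$ automatically), bypassing your case split altogether.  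Replacing your homological paragraph with this two-line intersection argument completes your proof.
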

\begin{proof}
  We will show that $A$ satisfies the condition in
  Lemma~\ref{retractcondition}; since $A$ is nonempty, this implies that $A$ is
  not cofibrant.

  Let $Y$ be the pushout of a diagram of the form
  \[\PI\Sd^2\DE[n] \ltarr \PI\Sd^2\pa \DE[n]\rtarr X,
  \]
  where $X$ is any poset. We use the explicit description of the pushout from \myref{pushpos}. Suppose that $A$ is a retract of $Y$, so that
  $\id_A$ admits a factorization $A \stackrel{i}{\rtarr}  Y \stackrel{r}{\rtarr} A$.

  Consider the map $(\PI\Sd^2\DE[n])\setminus\{\n\} \rtarr \PI\Sd^2\pa \DE[n]$
  defined by
  \[S_0\subset \ldots \subset S_k\quad \longmapsto  \begin{cases}
    S_0\subset \ldots \subset S_{k-1} & \hbox{if } S_k=\n, \\
    S_0\subset \ldots \subset S_k &\text{ otherwise.}
  \end{cases}
  \]
  This induces a map $p\colon Y\setminus\{\n\}\rtarr X$. We show that $\n\notin
  i(A)$, and that the composite
  \begin{equation}\label{ret} A \stackrel{i}{\rtarr} Y\setminus \{\n\} \stackrel{p}{\rtarr} X \rtarr Y
  \stackrel{r}{\rtarr} A
  \end{equation}
  is the identity on $A$.  From this we can conclude that $A$ is a retract of
  $X$.

  Since $\n\in Y$ is not a codomain of a non-identity arrow, the only elements
  of $A$ that $i$ could send to $\n$ are $a_1$ and $a_2$. We show more
  generally, that $i(a_1),i(a_2)\in X$. If $i(a_1)\in Y\setminus X$ or
  $i(a_2)\in Y\setminus X$, then $i(b_1),i(b_2),i(c_1),i(c_2)\in Y\setminus
  X$. Considering $i(c_1)$ and $i(c_2)$ as totally ordered sets of nonempty
  subsets of $\n$, the intersection $i(c_1)\cap i(c_2)$ is an element of
  $Y\setminus{X}$ and we have a diagram
  \[\xymatrix{ i(b_1)\ar[rd] & & i(c_1) \\
    & i(c_1)\cap i(c_2)\ar[ru]\ar[rd] & \\
    i(b_2) \ar[ru] & & i(c_2)
  }
  \]
  in $Y$. Applying the retraction $r\colon Y\rtarr A$ to this diagram yields an
  arrow between $b_1$ and $b_2$ or an arrow between $c_1$ and $c_2$. Both cases
  are impossible. We have shown that $i(a_1),i(a_2)\in X$ and thus that
  $\n\notin i(A)$.

  We can also show by the same argument as above that $i(b_1)$ and $i(b_2)$
  cannot both belong to $Y\setminus X$.

  It remains to show that the composite (\ref{ret}) is the identity.
  Recall that $i(a_1)$, $i(a_2)$ and at least one of $i(b_1)$, $i(b_2)$ belong
  to $X$. By symmetry we can assume that $i(b_2)\in X$. We need to show that
  $rpi(b_1)=b_1$, $rpi(c_1)=c_1$ and $rpi(c_2)=c_2$. Implicitly, we will use
  that any arrow in $Y$ from an element in $X$ to an element $z$ in
  $Y\setminus X$ factors through $p(z)$. Since $i(a_1) \leq i(b_1)$ and $i(a_2) \leq
  i(b_1)$, we have a diagram
\[\xymatrix{i(a_1)\ar[rd] & & \\
& pi(b_1)\ar[r] & i(b_1) \\
i(a_2)\ar[ru] & &}
\]
in $Y$. By applying $r$ to this diagram, we deduce that $rpi(b_1)=b_1$ since there is no arrow between $a_1$ and $a_2$.

Applying $r$ to the diagram
\[\xymatrix{pi(b_1)\ar[rd] & & \\
& pi(c_1)\ar[r] & i(c_1) \\
i(b_2)\ar[ru] & &}
\]
in $Y$, we deduce that $rpi(c_1)=c_1$. By symmetry, we also have $rpi(c_2)=c_2$. We have shown that $A$ is a retract of $X$.
\end{proof}

\begin{cor}
Not all finite posets in Thomason's model structure on $\Cat$ are cofibrant.
\end{cor}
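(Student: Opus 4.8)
The plan is to derive this immediately from \myref{2sphere}. The one ingredient needed is the comparison of cofibrancy in $\Pos$ and in $\Cat$: by \myref{cofibrations} a map between posets is a cofibration in $\Pos$ if and only if it is a cofibration in $\Cat$, and since every cofibrant object of $\Cat$ is a poset (\cite[Proposition~5.7]{thomason80}), it follows that $\Pos$ and $\Cat$ have exactly the same cofibrant objects.

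Granting that, I would proceed as follows. The finite poset $A$ displayed above is, under the full and faithful embedding $U$, a finite category. By \myref{2sphere} the object $A$ is not cofibrant in $\Pos$, so by the equivalence of cofibrant objects just recalled it is not cofibrant in $\Cat$ either. Hence $UA$ is a finite category that fails to be cofibrant in Thomason's model structure, which is exactly what the corollary asserts. One can phrase this without ever leaving $\Pos$: the morphism $\emptyset \rtarr A$ is not a cofibration in $\Pos$, hence by \myref{cofibrations} not a cofibration in $\Cat$, so $A$ is not cofibrant there.

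There is no real obstacle: the statement is a formal consequence of \myref{2sphere} together with \myref{cofibrations}. The only thing to check is the harmless bookkeeping that ``finite poset'' becomes ``finite category'' under $U$ and that $U$ matches up the two notions of cofibration, both of which are already in hand.
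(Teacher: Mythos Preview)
Your argument is correct and matches the paper's intended reasoning. The paper states the corollary without proof immediately after \myref{2sphere}, relying on the identification of cofibrant objects in $\Pos$ and $\Cat$ made at the start of \S\ref{Cof} (via \myref{cofibrations} and Thomason's result that cofibrant categories are posets), which is exactly what you invoke.
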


The above proof used many special properties of $A$ and thus cannot be used in
general to determine which objects are cofibrant.  However, there is one
class of posets that we can prove are cofibrant: the one-dimensional finite ones.
We say that a poset $P$ is (at most) one-dimensional if in any pair of composable 
morphisms at least one is an identity morphism.  

\begin{prop}\mylabel{onedimensionalcofibrant}
Every one-dimensional finite poset $X$ is cofibrant.
\end{prop}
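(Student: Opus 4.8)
The goal is to show that a one-dimensional finite poset $X$ is cofibrant, i.e.\ that $\emptyset \rtarr X$ has the left lifting property against all acyclic fibrations, equivalently (since $\Pos$ is compactly generated) that $\emptyset \rtarr X$ is a retract of a relative $P\PI\Sd^2\aI$-cell complex. Rather than build an explicit cell structure on $X$ itself, I would proceed by induction on the number of edges (non-identity morphisms) of $X$, at each stage attaching one copy of a generating cofibration to kill an edge, and arguing that $X$ is a retract of the result. The key point is that a one-dimensional finite poset is, combinatorially, a bipartite graph: its elements split into ``minimal'' and ``maximal'' ones, with all non-identity arrows going from minimal to maximal, and no composable pair of non-identity arrows. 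So $X$ is the colimit of its edges and vertices in an extremely simple way.

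\textbf{Key steps.} First I would identify the relevant generating cofibration. The poset $\PI\Sd^2\pa\DE[1] \rtarr \PI\Sd^2\DE[1]$ has, by the explicit description recalled just before Proposition~\ref{2sphere}, domain the discrete two-point poset $\{\{0\},\{1\}\}$ and codomain the poset $\{\{0\},\{1\},\{0,1\}\}$ shaped like $\{0\}\rtarr\{0,1\}\ltarr\{1\}$ — i.e.\ a ``cospan'' $v_0 \rtarr e \ltarr v_1$. Pushing this out along a map from the discrete two-point poset that hits two elements $x,y$ of a poset $W$ adjoins a new top element $e$ with $x\leq e$ and $y\leq e$ (and, via \myref{pushpos}, $z \leq e$ whenever $z\leq x$ or $z\leq y$ already). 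Second, starting from $X_0 = $ (the discrete poset on the vertex set of $X$, which is itself a coproduct of copies of $P\PI\Sd^2\DE[0]$, hence a cell complex), I would attach one such cospan-cell for each edge $x\rtarr y$ of $X$: let $X_1$ be the pushout adjoining a new top element $e_{xy}$ above $x$ and $y$ for every edge. The resulting poset $X_1$ is a cell complex, and I would write down an explicit retraction $X_1 \rtarr X$: send each original vertex to itself, each new element $e_{xy}$ to the maximal element $y \in X$, checking order-preservation using the fact that $X$ is one-dimensional (so the only elements below a new $e_{xy}$ in $X_1$ are $x$, $y$, and minimal vertices already $\leq x$ or $\leq y$, all of which map to something $\leq y$ in $X$). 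The section $X \rtarr X_1$ is the inclusion. Composing with $P\PI\Sd^2\aI$-cellularity of $X_1$ exhibits $X$ as a retract of a cell complex, hence cofibrant.

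\textbf{Main obstacle.} The delicate point is constructing the retraction $X_1\rtarr X$ and verifying it is order-preserving — in particular handling a vertex $y$ of $X$ that is maximal for some edges but also a codomain making it ``look maximal'' while a second application could fail; the one-dimensionality hypothesis is exactly what prevents the bad configurations, so the proof must use it at the right spot, namely in ruling out composable non-identity arrows among the $e_{xy}$ and the original vertices. One also has to be slightly careful that in $X_1$ two distinct edges $x\rtarr y$ and $x'\rtarr y$ with the same target genuinely produce distinct new elements $e_{xy} \neq e_{x'y}$ (they do, since cells are attached along a coproduct), and that the retraction collapses them consistently to $y$. Once order-preservation of the retraction is checked, everything else — that $X_0$ and each pushout $X_1$ is a cell complex, that retracts of cofibrant objects are cofibrant — is formal, using \myref{relcomp} and the model structure from \myref{OldMain}.
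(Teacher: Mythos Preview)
Your retraction $X_1 \rtarr X$ is fine in spirit, but the section fails: there is no order-preserving ``inclusion'' $X \rtarr X_1$. You built $X_1$ by attaching cells to the \emph{discrete} poset $X_0$ on the underlying set of $X$, and each attached cell only adjoins a new element sitting above the pair $\{x,y\}$; it does \emph{not} create an arrow from $x$ to $y$. So in $X_1$ the original vertices remain pairwise incomparable, and the identity-on-vertices map $X \rtarr X_1$ does not preserve the relation $x \leq y$ coming from an edge of $X$. One might try to repair this by sending each maximal $y$ to some $e_{x_0 y}$ instead, but then every other $x'$ with $x' \rtarr y$ in $X$ must land below $e_{x_0 y}$, and only $x_0$ and $y$ lie below it. Concretely, take $X$ to be the complete bipartite poset on $\{x,x'\}$ and $\{y,y'\}$ (two minima, two maxima, all four arrows): every maximal element of your $X_1$ has exactly two elements beneath it and no two of these maximal elements share both, so no injective order-preserving map $X \rtarr X_1$ exists at all. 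The one-dimensionality of $X$ does not help here.

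A secondary issue: $\PI\Sd^2\DE[1]$ is not the three-element cospan. Its elements are the chains of nonempty subsets of $\{0,1\}$ ordered by inclusion, so it has five elements $\{0\}$, $\{1\}$, $\mathbf{1}$, $\{0\}\!\subset\!\mathbf{1}$, $\{1\}\!\subset\!\mathbf{1}$ and is shaped like a ``W'' with three minima and two maxima. The obstruction above persists with the correct cell.

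The paper's argument is organized differently: it inducts on the number of \emph{elements}, removes a single minimal element $a$ at each step, and attaches one cell $\PI\Sd^2\DE[n]$ to $A = X\setminus\{a\}$ along its vertices $\{i\}\mapsto y_i$, where $y_0,\ldots,y_n$ are the targets of the arrows out of $a$. Inside $\PI\Sd^2\DE[n]$ the element $\n$ lies below each $\{i\}\!\subset\!\n$, so $a\mapsto \n$ and $y_i\mapsto (\{i\}\!\subset\!\n)$ give an honest order-preserving section into the pushout. Using a single higher-dimensional cell rather than one $1$-cell per edge is exactly what makes all of the outgoing arrows from $a$ simultaneously visible in the section.
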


\begin{proof} We proceed by induction on the number $m$ of elements of $X$.
If $m=0$, then $X=\emptyset$ and is thus cofibrant. Now suppose that $m\geq 1$. 
If $X$ has no non-identity morphisms (is zero-dimensional), then 
$X$ can be built up by attaching singleton sets $\PI\Sd^2\DE[0]$ to $\emptyset$ and is thus cofibrant.

  Otherwise, let $a$ be the domain of a non-identity morphism. Set
  $A=X\setminus{\{a\}}$. By the induction hypothesis $A$ is cofibrant. Let
  $Y=\{y_0,\ldots,y_n\}$ be the set of elements $y\in X$ such that there exists
  a non-identity morphism $a \rtarr y$ in $X$.  

  Let $CY$ denote the cone on $Y$ obtained by adding a least element $*$ to
  $Y$. Note that $Y\rtarr CY$ is an inclusion of a cosieve. Thus $X\cong
  A\cup_Y CY$ by a dual version of \myref{pushpos}. 
  
  We distinguish the two cases $n=0$ and $n>0$. If $n=0$, we glue $\PI\Sd^2\DE[1]$ to $A$ along a cofibration in such a way that $X$ is a retract of the resulting pushout, and therefore cofibrant. The inclusion of the vertex $0$ into $\DE[1]$ is a cofibration. Applying $\PI\Sd^2$ to this cofibration yields the inclusion of the poset $\{\{0\}\}$ into $\PI\Sd^2\DE[1]$. Identifying the element $\{0\}$ with $y_0$, we show that $X$ is a retract of the pushout $A\cup_Y \PI\Sd^2\DE[1]$. Let $X\rtarr A\cup_Y \PI\Sd^2\DE[1]$ be the map
   \[x\mapsto \begin{cases} \{0\}\subset \mathbf{1} &  \text{ if } x=y_0\\
    \mathbf{1} &  \text{ if } x=a\\
    x & \text{ otherwise}
  \end{cases}
  \]
 The map $\PI\Sd^2\DE[1]\rtarr CY$,
 \[S_0\subset \ldots \subset S_k\quad \mapsto \begin{cases}
    y_0 &  \text{ if } S_0=\{0\} \\
    * &  \text{ otherwise }
  \end{cases}
  \]
  induces a retraction $A\cup_Y \PI\Sd^2\DE[1]\rtarr X$ of the map $X\rtarr A\cup_Y \PI\Sd^2\DE[1]$ above. Thus $X$ is cofibrant if $n=0$ and we now assume that $n>0$.
  
  Similarly to the case $n=0$, we glue $\PI\Sd^2\DE[n]$ to $A$ along a cofibration in such a way that $X$ is a retract of the resulting pushout, and therefore cofibrant.
  
  The inclusion of the set of vertices of $\DE[n]$ into $\DE[n]$ is a
  cofibration. Applying $\PI\Sd^2$ to this cofibration yields the inclusion of
  the discrete poset $\{\{i\}\,|\,0\leq i\leq n\}$ into
  $\PI\Sd^2\DE[n]$. Identifying the element $\{i\}$ with $y_i$, let $Z$ denote
  the pushout $A\cup_Y (\PI \Sd^2\DE[n])$.  We claim that $X$ is a retract
  of $Z$. Indeed, let $j\colon X\rtarr Z$ be the map
  \[x\mapsto \begin{cases} \{i\}\subset \n & \text{ if } x=y_i\\
    \n & \text{ if } x=a\\
    x & \text{ otherwise}
  \end{cases}
  \]
  The map $\PI\Sd^2\DE[n]\rtarr CY$, 
  \[S_0\subset \ldots \subset S_k\quad \mapsto \begin{cases}
    y_i & \text{ if } S_0=\{i\} \\
    * & \text{ otherwise}
  \end{cases}
  \]
  induces a map $r\colon Z\rtarr X$ such that $rj=\id_X$ as desired.
\end{proof}

We illustrate the induction step of this proof using the following poset $X$:
\[\xymatrix{y_0 & y_1 & y_2 \\ a_1 \ar[u] \ar[ru] & a \ar[lu] \ar[u] \ar[ru] &
 a_2 \ar[lu] \ar[u]}\]
After removing $a$ we obtain the following poset $A$, which by induction hypothesis is cofibrant.
\[\xymatrix{y_0 & y_1 & y_2 \\ a_1 \ar[u] \ar[ru] &  &
  a_2 \ar[lu] \ar[u]}\]
The poset $Z$ in the proof above can be pictured as follows. 
\begin{center}
  \tdplotsetmaincoords{30}{0}
  \begin{tikzpicture}[tdplot_main_coords,yscale=3.5, xscale=4.5, baseline=0]
    \node (A1) at (0,0,0) {$y_0$};
    \node (A2) at (1,{sqrt(3)},0) {$y_1$};
    \node (A3) at (2,0,0) {$y_2$};
    
    \node (A12) at (barycentric cs:A1=1,A2=1,A3=0) {$*$};
    \node (A13) at (barycentric cs:A1=1,A2=0,A3=1) {$*$};
    \node (A23) at (barycentric cs:A1=0,A2=1,A3=1) {$*$};
    \node (A123) at (barycentric cs:A1=1,A2=1,A3=1) {$a$};

    \arDiv[$*_0$]{1}{12}
    \arDiv[$*_0$]{1}{13}
    \arDiv[$*_1$]{2}{12}
    \arDiv[$*_1$]{2}{23}
    \arDiv[$*_2$]{3}{13}
    \arDiv[$*_2$]{3}{23}
    \arDiv{12}{123}
    \arDiv{13}{123}
    \arDiv{23}{123}
    \arDiv[$z_0$]{1}{123}
    \arDiv[$z_1$]{2}{123}
    \arDiv[$z_2$]{3}{123}

    \baryc[$*_0$]{1}{12}{123}
    \baryc[$*_0$]{1}{13}{123}
    \baryc[$*_1$]{2}{12}{123}
    \baryc[$*_1$]{2}{23}{123}
    \baryc[$*_2$]{3}{13}{123}
    \baryc[$*_2$]{3}{23}{123}
    
    \path (A12-123) -- ++(0,0,-2) node (B1) {$a_1$};
    \path (A23-123) -- ++(0,0,-2) node (B2) {$a_2$};

    \draw[->] (B1) -- (A1);
    \draw[->] (B1) -- (A2);
    \draw[->] (B2) -- (A2);
    \draw[->] (B2) -- (A3);
  \end{tikzpicture}
\end{center}
Here each vertex is a distinct object of $Z$ (although we have not given the objects distinct names),
and the edges give all of the non-identity morphisms of $Z$. The inclusion $j\colon X\rtarr Z$ maps 
$a_i$ to $a_i$, $y_k$ to $z_k$ and $a$ to $a$.  The retraction $r$ is defined by
\[
\begin{array}{ccc}
  r(z_k) = r(y_k) = r(*_k) = y_k \qquad  r(a_i) = a_i \qquad
  r(a) = r(*) = a.
\end{array}\]

The essential point is that, even in such simple cases as in this section, proving that a poset is or is not cofibrant is a non-trivial exercise.

\section{Fibrant posets}\label{Fib}

In this section we give a class of examples of fibrant posets.  Before we begin we
give several easy lemmas needed in the proofs.
First, we show that when proving a category is fibrant it suffices to consider its
connected components.  Here, a category is connected if any two objects are connected 
by a finite zigzag of morphisms.   A component of a category is a maximal connected full subcategory, 
and any category is the disjoint union of its components.

\begin{lem} \label{conncomp} Let $\C\in \Cat$ or $\Pos$.  Then $\C$ is fibrant
if and only if all of its components are so.
\end{lem}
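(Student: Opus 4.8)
The plan is to prove Lemma~\ref{conncomp} by reducing the fibrancy of $\C$ to a lifting problem that can be solved componentwise, using the fact that fibrations in $\Cat$ and $\Pos$ are detected by the right lifting property against the generating acyclic cofibrations $\PI\Sd^2\aJ$ (or $P\PI\Sd^2\aJ$ in $\Pos$). Since the terminal category $*$ is connected, an object $\C$ is fibrant if and only if $\C\rtarr *$ has the right lifting property against every map in $\PI\Sd^2\aJ$. The key observation is that each generating acyclic cofibration $\PI\Sd^2(\LA^k[n]\rtarr\DE[n])$ has connected source and connected target: $\LA^k[n]$ and $\DE[n]$ are connected simplicial sets, $\Sd$ preserves connectedness, and $\PI$ of a connected simplicial set is a connected category (and $P$ of a connected category is a connected poset).

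First I would record this connectedness statement as the crux: for any commutative square with left vertical map $j\colon \aA\rtarr\aB$ in $\PI\Sd^2\aJ$ and right vertical map $\C\rtarr *$, the top horizontal map $\aA\rtarr\C$ has image contained in a single component $\C_\alpha$ of $\C$, because $\aA$ is connected and functors (order-preserving maps) send connected subcategories into connected subcategories. Since $\aB$ is also connected and $j$ is surjective-on-components (indeed $\aA\rtarr\aB$ hits every component of $\aB$, as $\aB$ is connected and nonempty whenever $\aA$ is), any lift $\aB\rtarr\C$ must also land in $\C_\alpha$. Hence a lift in the square for $\C$ exists if and only if a lift exists in the corresponding square for $\C_\alpha$. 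This immediately gives both directions: if every component $\C_\alpha$ is fibrant then every such square for $\C$ factors through a square for some $\C_\alpha$ which admits a lift, so $\C$ is fibrant; conversely if $\C$ is fibrant, then given a square for a fixed component $\C_\alpha$, compose with the inclusion $\C_\alpha\rtarr\C$, lift into $\C$, observe the lift lands in $\C_\alpha$ by connectedness, and conclude $\C_\alpha$ is fibrant.

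For the details I would invoke that fibrations in $\Cat$ are exactly the maps with the right lifting property against $\PI\Sd^2\aJ$ (Theorem~\ref{Thom}) and similarly in $\Pos$ against $P\PI\Sd^2\aJ$ (Theorem~\ref{OldMain}), so that "fibrant" unwinds to the lifting condition described above; the argument is then identical in both categories. The one point requiring care is the claim that each $j\in\PI\Sd^2\aJ$ has $\aA\rtarr\aB$ with both ends connected and nonempty: this follows because $\LA^k[n]$ is a nonempty connected simplicial set, $\Sd$ preserves geometric realization up to homeomorphism (or more elementarily, $\Sd$ of a connected simplicial set is connected since $\Sd\DE[0]$ is a point and subdivision of a connected complex is connected), and $\PI$ sends a connected simplicial set to a connected category because it is surjective on objects and morphisms are generated by $1$-simplices.

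I expect the main obstacle to be purely bookkeeping rather than conceptual: one must be slightly careful that ``$\C$ is the disjoint union of its components'' interacts correctly with the lifting problems, i.e.\ that a functor (resp.\ order-preserving map) out of a connected category (resp.\ connected poset) genuinely factors through a unique component, and that a lift, once it exists into $\C$, is forced into the right component by the commutativity of the square together with connectedness of the source. This is routine but is the step where one could slip if, for instance, $\aA$ were empty --- however $\emptyset$ does not occur as the source of a map in $\PI\Sd^2\aJ$, so the dichotomy ``image lies in exactly one component'' is unambiguous. No transfinite or model-categorical subtlety enters beyond the characterization of fibrations by the generating acyclic cofibrations.
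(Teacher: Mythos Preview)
Your proposal is correct and follows essentially the same route as the paper's proof: both argue that fibrancy amounts to the right lifting property against the generating acyclic cofibrations $\PI\Sd^2(\LA^k[n]\to\DE[n])$, observe that the domain and codomain of each such map are connected, and conclude that every lifting problem factors through a single component of $\C$. The paper's proof is simply a terser version of what you wrote, omitting the bookkeeping about nonemptiness and the explicit treatment of the converse direction.
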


\begin{proof} The image of a connected category under a functor lies in a
  single component.  Since each $\Pi \Sd^2\Lambda^k[n]$
  is connected, any functor 
  $\Pi \Sd^2 \Lambda^k[n] \rtarr \C$ lands in a single component and similarly
  for $\Pi \Sd^2 \Delta[n]$.  A category $\C$ is fibrant if and only if for every functor
  $f:\Pi\Sd^2\Lambda^k[n] \rtarr \C$, there exists a functor $h\colon \Pi \Sd^2 \Delta[n]
  \rtarr \C$ such that the diagram
  \[\xymatrix{ \Pi\Sd^2 \Lambda^k[n] \ar[rr]^-f \ar[d] && \C \\ \Pi \Sd^2
   \Delta[n] \ar[rru]_-h}\]
  commutes, and this holds if and only if it holds with $\C$ replaced by each of its components.
\end{proof}

Second, we record the following result relating pullbacks and pushouts
to binary products $\times$ and binary coproducts $\cup$ inside a poset $P$.
Its proof is an exercise using that there is at most one morphism between any two 
objects of $P$.

\begin{lem} \label{lem:prodcoprod}  If the pullback of a given pair of maps 
$x\rtarr a\longleftarrow y$ exists, it is the product $x\times y$, and if the product 
$x\times y$ exists, it is the pullback of any pair of maps $x\rtarr a\longleftarrow y$. 
Dually, if the pushout of a given pair of maps 
$x\longleftarrow a \longrightarrow y$ exists, it is the coproduct $x\cup y$, and if the coproduct 
$x\cup y$ exists, it is the pushout of any pair of maps $x\longleftarrow a \longrightarrow y$.
\end{lem}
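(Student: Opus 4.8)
The plan is to verify the claim about pullbacks and products directly, using the defining universal properties and the fact that posets have at most one morphism between any ordered pair of objects; the coproduct/pushout statement then follows by passing to the opposite poset $P^{\mathrm{op}}$, which is again a poset. So I would concentrate on the first sentence only.

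First I would fix a cospan $x\xrightarrow{} a \xleftarrow{} y$ in $P$, i.e.\ elements with $x\leq a$ and $y\leq a$, and suppose the pullback $z$ of this cospan exists in $P$. I claim $z$ is a greatest lower bound of $x$ and $y$. Indeed, the pullback comes equipped with maps $z\rtarr x$ and $z\rtarr y$ (making the square commute, which is automatic since there is at most one map $z\rtarr a$), so $z\leq x$ and $z\leq y$, i.e.\ $z$ is a lower bound. For the universal property: if $w\leq x$ and $w\leq y$, then $w$ together with these two inequalities forms a cone over the cospan (commutativity is again automatic), so the pullback property yields a map $w\rtarr z$, i.e.\ $w\leq z$. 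Hence $z=x\times y$. Conversely, if the product $x\times y$ exists in $P$, I would run the same argument backwards: the product has projections to $x$ and $y$, hence maps to $a$ via $x\leq a$ and $y\leq a$, and these agree since $\hom(x\times y,a)$ has at most one element; and any cone $w$ over the cospan $x\rtarr a\ltarr y$ is in particular a pair of maps $w\rtarr x$, $w\rtarr y$, hence factors uniquely through $x\times y$. So $x\times y$ is the pullback of the given cospan, and in fact of any cospan $x\rtarr a'\ltarr y$, since the object $a'$ plays no role in the argument beyond guaranteeing $x\leq a'$ and $y\leq a'$.

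For the coproduct/pushout assertion, I would simply observe that $P^{\mathrm{op}}$ is a poset, that a span $x\ltarr a\rtarr y$ in $P$ is a cospan in $P^{\mathrm{op}}$, that pushouts in $P$ are pullbacks in $P^{\mathrm{op}}$, and that binary coproducts in $P$ are binary products in $P^{\mathrm{op}}$; the already-proved statement applied to $P^{\mathrm{op}}$ then gives the result verbatim.

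There is no real obstacle here: every step is forced by the ``at most one morphism'' property, which trivializes all commutativity and uniqueness conditions, so the only thing to be careful about is bookkeeping the direction of the inequalities (pullback gives a lower bound / product, pushout gives an upper bound / coproduct) and noting explicitly that the middle object $a$ is irrelevant, which is what licenses the phrase ``of \emph{any} pair of maps.'' If anything merits a sentence of care it is the remark that a commuting square or cocone condition is vacuous in a poset, since that is the single fact doing all the work.
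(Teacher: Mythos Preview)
Your proof is correct and matches the paper's approach exactly: the paper leaves this as an exercise, noting only that it uses the fact that there is at most one morphism between any two objects of $P$, and you have carried out precisely that exercise.
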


The following addendum implies that a poset with binary products or coproducts is contractible, 
meaning that its classifying space is contractible.

\begin{lem} \label{lem:c->cont}
If $P$ is a poset containing an object $c$ such that either  $c\times x$ exists for any 
$x\in P$ or $c\cup x$ exists for any $x\in P$, then $P$ is contractible.
\end{lem}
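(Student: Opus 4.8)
The plan is to use the standard principle that the classifying space functor $|N(-)|$ converts a natural transformation between functors into a homotopy between the induced maps. For posets this takes an especially simple form: if $f,g\colon P\rtarr Q$ are order-preserving maps with $f(x)\leq g(x)$ for every object $x$ of $P$, then $|Nf|$ and $|Ng|$ are homotopic, naturality being automatic since $P$ has at most one morphism between any two objects. Granting this, it suffices to produce an order-preserving map $F\colon P\rtarr P$ that is objectwise comparable both to $\id_P$ and to a constant map: then $\id_{|NP|}=|N\id_P|$ is homotopic to $|NF|$, which in turn is homotopic to a constant map, so $|NP|$ is contractible.

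Suppose first that $c\times x$ exists in $P$ for every object $x$. I would set $F(x)=c\times x$ and check that $F$ is order-preserving: if $x\leq y$, then $c\times x\leq c$ and $c\times x\leq x\leq y$, so $c\times x$ is a lower bound of $c$ and $y$ and hence $c\times x\leq c\times y$ by the universal property of the product (products in a poset being greatest lower bounds). The projections of each product $c\times x$ onto its two factors give $F(x)=c\times x\leq x$ and $F(x)=c\times x\leq c$ for every $x$, that is, natural transformations $F\Rightarrow\id_P$ and $F\Rightarrow\kappa_c$, where $\kappa_c$ is the constant functor at $c$. Passing to classifying spaces, $|NF|$ is homotopic to $|N\id_P|=\id_{|NP|}$ and also to the constant map at the point of $|NP|$ determined by $c$; hence $\id_{|NP|}$ is nullhomotopic and $P$ is contractible.

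If instead $c\cup x$ exists for every $x\in P$, I would run the order-dual argument: $G(x)=c\cup x$ is order-preserving by the universal property of coproducts, and the coproduct inclusions furnish natural transformations $\id_P\Rightarrow G$ and $\kappa_c\Rightarrow G$, after which the same classifying-space argument applies just as before. Alternatively, one may simply note that $P^{op}$ satisfies the hypothesis of the first case, and that $|NP|\cong|N(P^{op})|$.

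I do not anticipate any genuine obstacle. The only step that actually uses the hypothesis is the verification that $x\mapsto c\times x$ (respectively $x\mapsto c\cup x$) is a functor $P\rtarr P$, which is a one-line consequence of the universal property; everything else is the routine invocation of the homotopy-from-natural-transformation principle. The mildly subtle conceptual point is that one does not obtain a single natural transformation from $\id_P$ to a constant functor, but only a zigzag through $F$ (respectively $G$), which is nonetheless enough to contract $|NP|$. Finally, the degenerate case $P=\emptyset$ does not arise, since $c$ is an object of $P$.
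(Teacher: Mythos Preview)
Your argument is correct. Both your proof and the paper's hinge on the same map $x\mapsto c\times x$, but they organize the contraction differently. The paper observes that $y\mapsto (c\times y\to c)$ defines a right adjoint to the forgetful functor $P/c\to P$, so that $|NP|\simeq |N(P/c)|$, and then uses that $P/c$ has a terminal object. You instead stay inside $P$ and exhibit the zigzag $\id_P\Leftarrow F\Rightarrow\kappa_c$ of natural transformations directly. Your route is slightly more elementary in that it avoids slice categories and the adjunction formalism, invoking only the ``natural transformation gives homotopy'' principle; the paper's route makes the homotopy equivalence $|NP|\simeq |N(P/c)|$ explicit before collapsing. The content is the same, and your handling of the dual case (either by running the order-dual argument or by passing to $P^{op}$) is fine.
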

\begin{proof}
We prove the lemma in the first case; the second case follows by duality.  Let
$P/c$ be the poset of all elements $x$ over $c$; this means that $x\leq c$, or, thinking of 
$P$ and $P/c$ as categories, that there
is a morphism $x \rtarr c$; it is contractible
since it has the terminal object  $c \rtarr c$. 
Since $P$ is a poset, there is at most one morphism $x\rtarr c$ for any object $x$ 
and the functor $P\rtarr P/c$ that sends an object $y$ to $c\times y \rtarr c$ is right adjoint 
to the forgetful functor that sends $x \rtarr c$ to $x$.  Therefore the classifying space
of $P$ is homotopy equivalent to that of $P/c$. 
\end{proof}

In \cite{MO}, Meier and Ozornova construct examples of fibrant categories.
They start from the notion of a partial model category, which is a weakening 
of the notion of a model category.  Recall that a  homotopical category $(\C,\W)$ 
is a category $\C$ together with a subcategory $\W$, whose maps we call weak
equivalences, such that every object of $\C$ is in $\W$ and $\W$ satisfies the
$2$ out of $6$ property: if morphisms $h\com g$ and $g\com f$ are in $\W$,
then so are $f$, $g$, $h$, and $h\com g\com f$.

\begin{defn}[{\cite[\S1.1]{BK}}]
  A \textsl{partial model category} is a homotopical category $(\C,\W)$ such that $\W$ 
 contains subcategories $\mathcal{U}$ and $\mathcal{V}$ that satisfy the following
properties.
  \begin{enumerate}[(i)]
   \item  $\mathcal{U}$ is closed under pushouts along morphisms in $\C$
    and $\mathcal{V}$ is closed under pullbacks along morphisms in $\C$.
  \item   The morphisms of $\W$ admit a functorial factorization into a
    morphism in $\mathcal{U}$ followed by a morphism in $\mathcal{V}$.
  \end{enumerate}
\end{defn}

In (i), it is implicitly required that the cited pushouts and pullbacks exist in $\C$.
For example, if $\C$ has a model structure with weak equivalences $\W$ then
it has a partial model structure, with $\mathcal{U}$ being the
subcategory of acyclic cofibrations and $\mathcal{V}$ being the subcategory of
acyclic fibrations.

\begin{thm}[{\cite[Main Theorem]{MO}}] \label{cor:MO} If $(\C,\W)$ is a homotopical 
category that admits a partial model structure, then
$\W$ is fibrant in the Thomason model structure on $\mathbf{Cat}$. 
\end{thm}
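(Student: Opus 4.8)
The plan is to verify directly that $\W$, regarded as a $1$-category, has the right lifting property with respect to the generating acyclic cofibrations of the Thomason model structure. By Theorem~\ref{Thom} these are the maps $\PI\Sd^2\LA^k[n]\rtarr\PI\Sd^2\DE[n]$ for $n\geq 1$ and $0\leq k\leq n$, so the content is that every functor $F\colon\PI\Sd^2\LA^k[n]\rtarr\W$ extends along the inclusion to a functor on $\PI\Sd^2\DE[n]$. By Lemma~\ref{conncomp} it suffices to do this one component of $\W$ at a time; since each $\PI\Sd^2\LA^k[n]$ is connected every such lifting problem already lands in a single component, and restricting the subcategories $\mathcal{U}$ and $\mathcal{V}$ to that component still gives a partial model structure, so we may as well assume $\W$ connected.

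The idea is to build the extension by adjoining the missing objects one at a time. Using the explicit descriptions recalled in Section~\ref{Cof}, the objects of $\PI\Sd^2\DE[n]$ missing from $\PI\Sd^2\LA^k[n]$ are precisely the chains $S_0\subsetneq\cdots\subsetneq S_m$ of nonempty subsets of $\n$ whose top term is $\n$ or $\n\setminus\{k\}$, and the inclusion between the two posets is a Dwyer map by Lemma~\ref{thomcrux}. One would adjoin these missing chains in a carefully chosen order so that, at the step that introduces a chain $\sigma$, the values of $F$ are already defined on all the objects of $\PI\Sd^2\DE[n]$ that one needs to compare with $F(\sigma)$; since everything in sight is a poset (Lemma~\ref{cruxtoo}(i)), $\sigma$ is the top of its down-set and the bottom of its up-set, so each step is a genuine ``cone'' extension problem — produce an object $F(\sigma)$ of $\C$ together with maps $F(d)\rtarr F(\sigma)$ for the already-placed $d<\sigma$, or maps $F(\sigma)\rtarr F(u)$ for the already-placed $u>\sigma$, all lying in $\W$ and compatible with the maps of $F$ already defined among the $F(d)$ and $F(u)$.

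The partial model structure is exactly what solves such an extension problem. When the configuration to be filled lies below $\sigma$, one realizes $F(\sigma)$ by a pushout (iterated, if necessary) of the already-defined part of $F$, using the functorial $\mathcal{U}$--$\mathcal{V}$ factorization to arrange that one is pushing out along maps in $\mathcal{U}$; closure of $\mathcal{U}$ under pushout along arbitrary maps of $\C$ then forces the comparison maps $F(d)\rtarr F(\sigma)$ into $\W$. When the configuration lies above $\sigma$, one uses the dual pullback construction with the relevant legs routed through $\mathcal{V}$ and its closure under pullback. Splicing through one further application of the factorization produces $F(\sigma)$ with all required maps, and the $2$-out-of-$6$ property of $\W$ then forces every composite and every commuting triangle dictated by the poset relations back into $\W$. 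Running through all the missing chains in the chosen order yields the extension, hence the lift, hence fibrancy of $\W$.

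The hard part is the combinatorial organization: choosing an adjunction order on the missing chains so that, at every stage, the part of the diagram that must be matched is already completely defined and the needed pushout or pullback presentation is available, and then checking that the finite pushouts and pullbacks invoked actually exist in $\C$ and come with maps that are honestly weak equivalences. Since $\C$ is a priori only a homotopical category, these (co)limits have to be manufactured step by step out of the $\mathcal{U}$-- and $\mathcal{V}$-closure hypotheses rather than assumed, and the choices have to be made coherently across all $n$, $k$, and all missing chains at once; the Dwyer-map structure of $\PI\Sd^2\LA^k[n]\rtarr\PI\Sd^2\DE[n]$ from Lemma~\ref{thomcrux} is the tool one would use to keep this in hand, and getting the organization exactly right is the real content of the argument.
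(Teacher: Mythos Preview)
The paper does not prove this statement: it is quoted as the Main Theorem of Meier--Ozornova \cite{MO} and used as a black box, so there is no proof here to compare against.

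As for your sketch on its own merits: the broad strategy---extend a given functor $\PI\Sd^2\LA^k[n]\rtarr\W$ over the missing chains one at a time, manufacturing the new values from pushouts along maps in $\mathcal{U}$ and pullbacks along maps in $\mathcal{V}$, with the functorial factorization mediating between the two---is indeed the shape of the argument in \cite{MO}. But you have not carried it out, and as you yourself concede in the final paragraph, the deferred ``combinatorial organization'' is the entire content. In particular, a missing chain $\sigma$ typically has both predecessors \emph{and} successors already present, so one cannot simply choose ``pushout'' or ``pullback'' as you suggest; one must produce $F(\sigma)$ compatibly with both sides at once, and arranging this coherently across all $n$ and $k$ is precisely what takes Meier and Ozornova a full paper. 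What you have written is a plan, not a proof.

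A minor point: your reduction to connected $\W$ via Lemma~\ref{conncomp} is not quite right as stated. The partial model structure lives on the pair $(\C,\W)$ and the pushouts and pullbacks you need are taken in $\C$; restricting $\mathcal{U}$ and $\mathcal{V}$ to a component of $\W$ does not give a partial model structure on anything smaller. The reduction you actually want is simply that $\PI\Sd^2\LA^k[n]$ is connected, so any given lifting problem lands in a single component of $\W$, and any extension built from maps in $\mathcal{U}\cup\mathcal{V}\subset\W$ stays there automatically.
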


In the present context, it is very natural to consider those partial model
structures such that $\C$ is a poset.  In \cite{DZ}, Droz and Zakharevich
classified all of the model structures on posets.

\begin{thm}[{\cite[Theorem B]{DZ}}] \label{thm:DZ} Let $P$ be a poset which
  contains all finite products and coproducts, and let $\W$ be a subcategory
  that contains all objects of $P$.  Then $P$ has a model structure with $\W$ as
  its subcategory of weak equivalences if and only if the following two
  properties hold.
   \begin{enumerate}[(i)]
  \item If a composite $gf$ of morphisms in $P$ is in $\W$, then both $f$ and
    $g$ are in $\W$.
  \item There is a functor $\chi\colon P \rtarr P$ that takes all maps in $\W$ 
  to identity maps and has the property that for every object $x\in P$, the four canonical maps of the diagram
    \[\xymatrix{\chi(x)\times x \ar[r] \ar[d] & \chi(x) \ar[d] \\ x \ar[r] &  \chi(x)\cup x}\]
    in $P$ are weak equivalences.
  \end{enumerate}
\end{thm}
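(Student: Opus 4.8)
The plan is to prove the two implications separately, in both cases treating $\chi$ as a fibrant--cofibrant replacement functor. The engine will be the following \emph{collapsing principle}: in any model structure on a poset $P$, a weak equivalence $u\colon a\rtarr b$ between objects that are both fibrant and cofibrant must be an identity. Indeed, such a $u$ is a homotopy equivalence, hence admits a homotopy inverse $b\rtarr a$; since $P$ has at most one morphism between any two objects, this forces $a=b$ and $u=\id$.

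\emph{Necessity.} Suppose $P$ carries a model structure with weak equivalences $\W$. I would fix functorial factorizations (a harmless point in the poset setting) to get functorial replacements $Qx\rtarr x$, an acyclic fibration, and $x\rtarr Rx$, an acyclic cofibration, and set $\chi=RQ$. Since $Q$ and $R$ preserve weak equivalences, $\chi$ carries every map of $\W$ to a weak equivalence between fibrant--cofibrant objects, hence to an identity by the collapsing principle, so $\chi$ is an endofunctor killing $\W$. For condition (i): given $a\xrightarrow{f}b\xrightarrow{g}c$ with $gf\in\W$, applying $RQ$ gives $\chi a\le\chi b\le\chi c$ with $\chi a\rtarr\chi c$ a weak equivalence between fixed points, hence an identity, so $\chi a=\chi b=\chi c$. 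Then the naturality square of $\id\Rightarrow R$ degenerates, so $Qa\rtarr Qb$ is a weak equivalence by two-out-of-three (its composite with the acyclic cofibration $Qb\rtarr\chi b=\chi a$ equals the acyclic cofibration $Qa\rtarr\chi a$), and two-out-of-three applied to the commuting triangle $Qa\rtarr a\rtarr b\;=\;Qa\rtarr Qb\rtarr b$ then forces $f\in\W$, whence $g\in\W$. For condition (ii): the span $x\xleftarrow{\ \sim\ }Qx\xrightarrow{\ \sim\ }\chi x$ has right leg an acyclic cofibration, and its pushout in $P$ is the coproduct $\chi(x)\cup x$ by Lemma~\ref{lem:prodcoprod}, so $x\rtarr\chi(x)\cup x$ is a weak equivalence, being a cobase change of an acyclic cofibration; dually $Qx$ maps into the meet $\chi(x)\times x$ (the pullback of $\chi(x)\rtarr\chi(x)\cup x\ltarr x$), so applying the just-proved condition (i) to the factorization $Qx\rtarr\chi(x)\times x\rtarr x$ of the weak equivalence $Qx\rtarr x$ makes the projection $\chi(x)\times x\rtarr x$ a weak equivalence, and a further round of two-out-of-three (via $Qx\rtarr\chi x$ and via the map $\chi(x)\cup x\rtarr Rx$) handles the remaining two maps. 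This $\chi$ witnesses (i) and (ii).

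\emph{Sufficiency.} Conversely, assume (i) and (ii). First note that $\chi$ is idempotent: the maps $x\rtarr\chi(x)\cup x$ and $\chi(x)\rtarr\chi(x)\cup x$ lie in $\W$, so $\chi$ identifies their sources and targets, giving $\chi\chi(x)=\chi(\chi(x)\cup x)=\chi(x)$. I would then call $x\in P$ \emph{cofibrant} if $x\le\chi(x)$ and \emph{fibrant} if $\chi(x)\le x$, so that the fibrant--cofibrant objects are exactly the fixed points of $\chi$, and the maps $\chi(x)\times x\rtarr x$ and $x\rtarr\chi(x)\cup x$ of (ii) become functorial cofibrant and fibrant replacements landing among those fixed points. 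Declare the weak equivalences of $P$ to be $\W$, and define the cofibrations and fibrations by explicit order-theoretic conditions phrased through $f$ and $\chi$, arranged so that every map between fixed points of $\chi$ is both a cofibration and a fibration, so that $\chi(x)\times x\rtarr x$ is an acyclic fibration and $x\rtarr\chi(x)\cup x$ an acyclic cofibration for all $x$, and so that the resulting classes are closed under the operations needed for a model structure. One then verifies Quillen's axioms: two-out-of-three for $\W$ is immediate from (i); the retract and lifting axioms reduce in a poset to order bookkeeping, exploiting that a lift, when it exists, is unique and automatically commuting; and the two factorization axioms are produced by factoring $f\colon x\rtarr y$ through an object assembled from $x$, $y$ and $\chi$ --- for instance $x\rtarr x\cup(\chi(y)\times y)\rtarr y$ --- where condition (ii) is exactly what guarantees that the required legs lie in $\W$.

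\emph{Main obstacle.} The hard part is entirely in the sufficiency direction: choosing the definitions of cofibrations and fibrations restrictively enough that the lifting axioms can be checked, yet permissively enough to yield the two factorizations, and in particular deriving the factorization axioms from condition (ii). The necessity direction, by contrast, is a fairly mechanical consequence of the collapsing principle together with repeated use of two-out-of-three.
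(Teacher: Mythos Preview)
The paper does not prove this theorem: it is quoted from \cite{DZ} and used as a black box in the proof of Proposition~\ref{prop:ex}. So there is no proof in the paper to compare against, and I can only assess your outline on its own terms.

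Your necessity argument is essentially correct and is the natural one. The collapsing principle is exactly the right engine, and the deduction of (i) and (ii) from a model structure via $\chi=RQ$ goes through as you describe; in a poset, functoriality of the factorizations is automatic (there is at most one map between any two objects), so no extra care is needed there. The derivation of (i) from $\chi a=\chi b=\chi c$ by repeated two-out-of-three, and of (ii) by pushing out the span $x\longleftarrow Qx\rtarr\chi x$ and then invoking (i), is clean.

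The sufficiency direction, however, is a plan rather than a proof, as you yourself flag in the ``Main obstacle'' paragraph. You never actually define the cofibrations and fibrations: the phrase ``explicit order-theoretic conditions phrased through $f$ and $\chi$'' is a placeholder, not a definition. Without those definitions the lifting and factorization axioms cannot even be formulated, let alone checked. Your proposed factorization $x\rtarr x\cup(\chi(y)\times y)\rtarr y$ does have an acyclic second leg (apply your condition (i) to $\chi(y)\times y\rtarr x\cup(\chi(y)\times y)\rtarr y$), but there is nothing making the first leg a cofibration, since that class has not been specified. The substance of \cite{DZ} lies precisely in supplying those definitions and verifying the axioms against them; your sketch correctly locates the difficulty but does not carry it out.
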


These two results have the following consequence.

\begin{prop} \label{prop:ex} Let $P$ be a poset satisfying the following
  conditions:
  \begin{enumerate}[(i)]
  \item $P$ contains an object $c$ such that $c\times x$ and $c\cup x$ exist in
    $P$ for any other object $x\in P$.
  \item For any two objects $a,b\in P$, either $a\times b$ exists or there does
    not exist an $x\in P$ such that $x \leq a$ and $x\leq b$.  Dually, either $a
    \cup b$ exists or there does not exist an $x\in P$ such that $x \geq a$ and $x
    \geq b$.
  \end{enumerate}
  Then $P$ is a component of the weak equivalences in a model category
  and is therefore fibrant in $\Pos$.  Moreover, $P$ is contractible.
\end{prop}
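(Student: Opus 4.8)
The plan is to exhibit $P$ as a connected component of the subcategory of weak equivalences of a model structure on a mild enlargement of $P$, and then invoke Theorems~\ref{thm:DZ} and~\ref{cor:MO} together with Lemma~\ref{conncomp}. Contractibility I would dispose of first: hypothesis (i) provides that $c\times x$ exists for every $x\in P$, so Lemma~\ref{lem:c->cont} applies directly.

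Let $Q$ be the poset obtained from $P$ by adjoining a new least element $\hat{0}$ and a new greatest element $\hat{1}$. The first task is to check that $Q$ contains all finite products and coproducts, which is what makes Theorem~\ref{thm:DZ} applicable. The empty product is $\hat{1}$ and the empty coproduct is $\hat{0}$; for a binary product of $a,b\in Q$ the only nontrivial case is $a,b\in P$, where either no $x\in P$ satisfies $x\leq a$ and $x\leq b$ — so that $a\times_Q b=\hat{0}$ — or hypothesis (ii) yields $a\times_P b$, which is also the product in $Q$ since it dominates $\hat{0}$ and every other lower bound. Binary coproducts are dual. I would also record here two facts used below: $P$ is connected (the span $x\leftarrow c\times x\rightarrow c$ links any $x$ to $c$), and $P$ is convex in $Q$ (if $a\leq b\leq c$ with $a,c\in P$, then $b$ cannot be $\hat{0}$ or $\hat{1}$, so $b\in P$).

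Next, let $\W\subseteq Q$ be the subcategory whose morphisms are all identities of $Q$ together with all morphisms of $P$; this is a subcategory containing every object, and its connected components are exactly $P$, $\{\hat{0}\}$, and $\{\hat{1}\}$. I would then verify the two conditions of Theorem~\ref{thm:DZ} for $(Q,\W)$. Condition (i), that $gf\in\W$ forces $f,g\in\W$, follows from convexity of $P$ in $Q$. For condition (ii), take $\chi\colon Q\to Q$ to be the constant map at $c$ on $P$ and the identity on $\hat{0}$ and on $\hat{1}$; this is order-preserving and sends every morphism of $\W$ to an identity. For $x\in P$, the four corners $\chi(x)\times x=c\times x$, $\chi(x)=c$, $x$, and $\chi(x)\cup x=c\cup x$ all lie in $P$ by hypothesis (i) and the computation of products and coproducts in $Q$ above, so all four canonical maps belong to $\W$; for $x=\hat{0}$ and $x=\hat{1}$ the four corners all equal $x$ and the maps are identities. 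Hence Theorem~\ref{thm:DZ} produces a model structure on $Q$ with $\W$ as its weak equivalences.

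This model structure is in particular a partial model structure, so Theorem~\ref{cor:MO} shows that $\W$ is fibrant in $\Cat$; by Lemma~\ref{conncomp} each of its components is fibrant, in particular $P$, which is then fibrant in $\Pos$ since it is a poset. The one step needing genuine care — and the only place the hypotheses are really used — is the verification that $Q$ has all finite products and coproducts and that the square in Theorem~\ref{thm:DZ}(ii) stays inside $\W$; hypothesis (ii) is tailored precisely so that whenever a product or coproduct fails to exist in $P$, it is supplied in $Q$ by $\hat{0}$ or $\hat{1}$.
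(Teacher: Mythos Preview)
Your proof is correct and follows essentially the same approach as the paper: adjoin a new initial and terminal object to $P$, use hypothesis~(ii) to see the enlarged poset has finite products and coproducts, define $\W$ so that $P$ is a component, and apply Theorem~\ref{thm:DZ} with $\chi$ collapsing $P$ to $c$. Your version is in fact slightly more thorough, since you explicitly check that $P$ is connected (via spans through $c$) and that $P$ is convex in $Q$, which justifies respectively that $P$ is a single component of $\W$ and that condition~(i) of Theorem~\ref{thm:DZ} holds.
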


\begin{proof}
  Consider the poset $\tilde{P}$ whose objects are those of $P$ and two further
  objects, $\emptyset$ and $\ast$.  The morphisms are those of $P$ and those
  dictated by requiring $\emptyset$ to be an initial object and $\ast$ to be a
  terminal object (so there is no morphism $\ast \rtarr \emptyset$). Condition (ii)
  ensures that $\tilde{P}$ has all finite products and coproducts.  Indeed, if $a,b\in P$ and 
  $a\times b$  does not exist in $P$, then $a\times b = \emptyset$ in $\tilde{P}$ and dually
  for coproducts. For all $x\in \tilde P$, $x\times \ast = x$, $x\times \emptyset = \emptyset$,
  $x\cup \emptyset = x$, and  $x\cup \ast =\ast$.
  
  Let $\W$ be the union of $P$ and the discrete subcategory $\{\emptyset, *\}$ of $P$.
  Although $\tilde P$ is connected, $P$ is one of the three components
  of $\W$, the other two being the discrete components $\{\emptyset\}$ and $\{*\}$ (which are clearly fibrant). 
  Theorem~\ref{thm:DZ} implies that $\tilde P$ has a model structure with $\W$ as its
  subcategory of weak equivalences. Indeed, condition (i) is clear and, for condition (ii),
  we define $\chi\colon \tilde P\rtarr \tilde P$ by mapping all of $P$ to $c$ (and
  its identity morphism), mapping $\emptyset$ to $\emptyset$, and mapping $\ast$
  to $\ast$.  Therefore $\W$ is fibrant by Theorem~\ref{cor:MO}, hence $P$ is fibrant by
  Lemma~\ref{conncomp}; $P$ is contractible by Lemma \ref{lem:c->cont}.
\end{proof}

For example, if $P$ and $Q$ are any posets satisfying condition (ii) of
Proposition~\ref{prop:ex} then the following poset is fibrant:
\[
  \xymatrix{
    & \bullet & & \bullet \\
    P \ar[ru] & & c \ar[ru] \ar[lu] & & Q \ar[lu] \\
    & \bullet \ar[lu] \ar[ru] & & \bullet \ar[lu] \ar[ru] \\
  }  
\]

Finally, we prove a partial converse to Proposition \ref{prop:ex} which shows that in many cases the connected
fibrant posets constructed by Theorem~\ref{cor:MO} are contractible. 

\begin{defn} A map $f\colon a \rtarr b$ in a poset $P$ is {\em maximal} if there
do not exist any non-identity morphisms $ z\rtarr a$ or $b\rtarr z$.    
\end{defn} 
For example, the 
 composition of a sequence of maximal length in $P$ is maximal.
 
 \begin{prop} \label{prop:partial-model} Let $( \W, \mathcal{U}, \mathcal{V})$
   be a partial model structure on a poset $P$ and let $Q$ be a connected
   component of $\W$ that contains a maximal map.  Then $Q$ contains an object
   $c$ such that $c\times x$ and $c\cup x$ exist in $Q$ for any other object
   $x\in Q$. Therefore $Q$ is contractible.
\end{prop}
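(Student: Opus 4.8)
The plan is to unwind the definition of a partial model structure together with the factorization axiom, using the maximal map in $Q$ as a source of an object that behaves like a zero object \emph{within} the component $Q$. Suppose $f\colon a\rtarr b$ is a maximal map lying in $\W$, so $a$ and $b$ are in the same component $Q$. By the functorial factorization, $f$ factors as $a\xrightarrow{u} m \xrightarrow{v} b$ with $u\in\mathcal{U}$ and $v\in\mathcal{V}$; since $P$ is a poset and $f$ is maximal, there are no non-identity maps into $a$ or out of $b$, which forces $u$ and $v$ to essentially be $f$ up to identities, and in particular $m$ is $a$ or $b$. The key point is to exploit that $\mathcal{U}$ is closed under pushouts along arbitrary maps of $P$ and $\mathcal{V}$ under pullbacks: I would show that $a$ admits an acyclic cofibration to every object of $Q$ (or dually $b$ admits an acyclic fibration from every object of $Q$) by propagating along zigzags.

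Concretely, first I would establish that for the maximal map $f\colon a\rtarr b$, the map $a\rtarr a$ is the only map into $a$ and $b\rtarr b$ the only map out of $b$ among non-identities touching these vertices; then for any other object $x\in Q$ connected to $a$ by a zigzag in $\W$, I would push forward the map $\emptyset$-like behavior: using that $\mathcal{U}$-maps are closed under pushout along any morphism, and that $\mathcal{V}$-maps are closed under pullback, I would argue inductively along the zigzag that there is a map in $\W$ from $a$ to $x$ \emph{and} a map in $\W$ from $x$ to $b$, and moreover that the relevant binary products and coproducts with $a$ (resp.\ $b$) exist in $Q$. The existence of $a\times x$ for all $x\in Q$ follows because the pullback of $a\rtarr b \ltarr x$ (noting $b$ receives maps from everything in the component after suitable maximality bookkeeping) exists and equals $a\times x$ by \myref{lem:prodcoprod}; dually for coproducts with $b$. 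Taking $c$ to be $a$ (for products) and separately verifying $c\cup x$ exists, or balancing the two roles of $a$ and $b$ via maximality of $f$, yields the desired object $c$ with both $c\times x$ and $c\cup x$ existing in $Q$ for all $x\in Q$.

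Once such a $c$ is produced, contractibility of $Q$ is immediate from \myref{lem:c->cont}, since that lemma only requires the existence of $c\times x$ for all $x$ (or $c\cup x$ for all $x$), and we will have the stronger conclusion that both exist.

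I expect the main obstacle to be the bookkeeping in the zigzag propagation step: showing that the single maximal map $f\colon a\rtarr b$ really does force, for \emph{every} $x$ in the component, the existence of the comparison maps $a\rtarr x$ and $x\rtarr b$ (equivalently $x\times a$ and $x\cup b$) inside $Q$. The subtlety is that a priori $x$ is connected to $a$ only by an undirected zigzag in $\W$, not a directed path, so one must repeatedly invoke closure of $\mathcal{U}$ under pushout and of $\mathcal{V}$ under pullback — together with the uniqueness of morphisms in a poset and the maximality of $f$ — to convert each backward step of the zigzag into the needed directed comparison. Getting the induction hypothesis stated correctly (carrying along both the map and the existence of the product/coproduct, and that these stay within $Q$ rather than escaping to a larger component or failing to exist in $P$) is where the real care is needed; everything after that is a short application of the two product/coproduct lemmas.
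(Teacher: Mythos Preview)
Your proposal contains a genuine error that derails the argument. You claim that in the factorization $a\xrightarrow{u} m \xrightarrow{v} b$ of the maximal map $f$, maximality forces $m=a$ or $m=b$. This is false: maximality of $f$ says there are no non-identity morphisms \emph{into} $a$ or \emph{out of} $b$, but it says nothing about morphisms out of $a$ or into $b$. For instance, in the chain $0<1<2$ with $\W=\sU=\sV$ equal to all morphisms and the obvious factorization, the maximal map $0\to 2$ factors through $m=1$. So the middle object $m$ is genuinely new, and discarding it is exactly the wrong move.

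The paper's proof hinges on keeping $c=m$ and establishing the key lemma you never formulate: \emph{every} morphism $z\to c$ in $Q$ lies in $\mathcal{U}$, and dually every $c\to z$ lies in $\mathcal{V}$. This is proved by factoring $z\to c$, pulling the $\mathcal{V}$-part back along $a\to c$ (where minimality of $a$ forces the pullback to be $a$ itself), and then pushing the known $\mathcal{U}$-map $a\to c$ forward to see the remaining piece is in $\mathcal{U}$. Both halves of the maximal map are used, and the argument only works at the midpoint $c$, not at $a$ or $b$. Once this lemma is in hand, the zigzag-shortening you gesture at becomes precise: any zigzag ending $\cdots \leftarrow z \to c$ has $z\to c\in\mathcal{U}$, so the required pushout exists and the zigzag shortens; dually for the other orientation. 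Your plan to take $c=a$ and show $a\leq x$ and $x\leq b$ for all $x\in Q$ has no mechanism behind it, because you have no non-trivial map out of $a$ known to lie in $\mathcal{U}$ (nor into $b$ in $\mathcal{V}$) to push or pull along. The ``suitable maximality bookkeeping'' is where the real content lives, and it requires the middle object.
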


\begin{proof}
  Let $f\colon a \rtarr b$ be a maximal map in $Q$ and factor it as a map $a
  \rtarr c$ in $\sU$ followed by a map $c \rtarr b$ in $\sV$, using the
  functorial factorization.  Since $Q$ is a connected component of $\W$, $c$ is
  in $Q$.
  
  First, we claim that any morphism $g\colon z \rtarr c$ in $Q$ is in
  $\mathcal{U}$.  Factor $g$ as a morphism $z \rtarr w$ in $\mathcal{U}$
  followed by a morphism $w \rtarr c$ in $\mathcal{V}$.  Since $\mathcal{V}$ is
  closed under pullbacks, $a\times_c w\rtarr a$ exists and is in $\mathcal{V}$.
  However, since $f$ is maximal in $\W$, we must have $a \times_c w = a$, so
  there exists a morphism $a \rtarr w$.  By Lemma~\ref{lem:prodcoprod}, the
  pushout $c\cup_a w$ of $a\rtarr c$ along $a \rtarr w$ is $w \cup c$, and
  $w\cup c = c$ since $P$ is a poset and there is a map $w\rtarr c$.  But then
  $w \rtarr c$ is the pushout of a morphism in $\mathcal{U}$, so it is also in
  $\mathcal{U}$.  Thus $g$ is the composite of two morphisms in $\mathcal{U}$,
  so it is also in $\mathcal{U}$, as claimed.
 Dually, any morphism $c\rtarr z$ in $Q$ is in $\mathcal{V}$.  

  Now let $x$ be any object in $Q$.  Since $Q$ is connected, there is a
  finite zigzag of morphisms of $Q$ connecting $x$ to $c$. 
 If the zigzag ends with 
 \[ \xymatrix@1{ w\ar[r]^-h & y  & \ar[l]_-{i} z \ar[r]^-{j} & c,\\} \]
 then $j$ is in $\mathcal{U}$, so $y \cup_{z} c$ exists and we can shorten
 the zigzag via the diagram 
 \[  \xymatrix{ w\ar[r]^-h \ar[drr] & y \ar[dr] & \ar[l]_{i} z \ar[r]^{j} & c  \ar[dl] \\
& & y\cup_z c. & \\} \]
The dual argument applies to shorten the zigzag if it ends with 
\[ \xymatrix@1{ w & \ar[l]_-{h} y   \ar[r]^-{i}  & z &  \ar[l]_-{j} c.\\} \]
 Inductively, we can shorten any zigzag to one of either of the forms 
  \[\xymatrix@1{x & \ar[l] z \ar[r] & c &  \ \hbox{or} \  & x \ar[r] & z & \ar[l] c.\\} \]
 We show that $c\cup x$ and $c\times x$ exist in the first
  case; the same is true in the second case by symmetry.  Since $z \rtarr c$ is in $\mathcal{U}$, 
  $c\cup_z x$ exists, and it is $c\cup x$ by Lemma~\ref{lem:prodcoprod}. Since $c\rtarr c\cup x$ is in $\sV$, 
  $c \times_{c\cup x} x$ also exists, and it is $c\times x$ by
  Lemma~\ref{lem:prodcoprod} again.

  Thus $Q$ contains an object $c$ such that $c\times x$ and $c\cup x$ exists for
  any object $x\in Q$, as claimed, and it follows from Lemma~\ref{lem:c->cont} that  $Q$ is
  contractible.
\end{proof}

\section*{Acknowledgement}
We thank Viktoriya Ozornova for pointing out some misleading typos and a mistake in our original proof 
of \myref{onedimensionalcofibrant} and for alerting us to the work of Bruckner and Pegel. We are grateful to the referee for detailed comments and, in particular, for pointing out that infinite non-cofibrant posets are known. 
\bibliographystyle{plain} 
\bibliography{PMSTIZG}

\end{document}